\numberwithin{equation}{section}
\numberwithin{figure}{section}
\theoremstyle{plain}
\newtheorem{thm}{Theorem}[section]
\newtheorem{lem}[thm]{Lemma}
\newtheorem{definition}[thm]{Definition}
\newtheorem{remark}[thm]{Remark}
\begin{document}
	\title{Contact lifts and  Hölder lifts to central extension of Carnot groups}
	\author{Yihan Cui}
	\maketitle
	\begin{abstract}
		We consider the existence problem of lift $F$ of a map $f$ between Carnot group with different smoothness, where we use central extension to define lifting. Our main result is the existence of the contact lifts of Lipschitz and Sobolev maps and the rigidity result for the contact lift of quasiconformal maps: a quasiconformal map admits a contact lift then it is bi-Lipschitz. We also show a necessary criterion for the extension of $\gamma$-Hölder lift when $\gamma>\frac{n}{n+1}$ for step-$n$ Carnot group.    
	\end{abstract}
	\section{Introduction}
	The lifting problem is a useful tool to study and construct a contact map between higher step Carnot group. Let's illustrate this problem.
	
	Let $V_1 \xrightarrow{\iota} G_1 \xrightarrow{\pi_1} H_1$ and $V_2 \xrightarrow{\iota} G_2 \xrightarrow{\pi_2} H_2$ be central extensions of Carnot group. A typical example is the Heisenberg group $R \xrightarrow{\iota} \mathbb{H}_n \xrightarrow{\pi} \mathbb{R}^{2n}$. The purpose is to find a contact lift $F:G_1\to G_2$ of contact map $f:H_1\to H_2$ such that $\pi_2 \circ F=f\circ \pi_1$ i.e. the following diagram commutes
	
	\begin{displaymath}
		\xymatrix{
			G_1 \ar[d]^{F} \ar[r]^{\pi_1} & H_1 \ar[d]^{f}
			\\
			G_2  \ar[r]^{\pi_2} & H_2
		}
	\end{displaymath}
	In the Heisenberg group case, we want to find a contact lift $F$ of $f$ such that the following diagram commutes
	
	\begin{displaymath}
		\xymatrix{
			\mathbb{H}_n \ar[d]^{F} \ar[r]^{\pi} & \mathbb{R}^{2n} \ar[d]^{f}
			\\
			\mathbb{H}_n  \ar[r]^{\pi} & \mathbb{R}^{2n}
		}
	\end{displaymath}
	
	When the domain is one dimensional, by control theory (\cite{2019Agrachev}, Section 8), a curve lays in the Euclidean space can be uniquely lifted into Carnot group. When the domain is higher dimensional, we are supposed to consider the integrability of submanifold. 
	
	A well studied topic is the Heisenberg group, a classical technique used by Allcock is the lift of a map $\mathbb{D}\to \mathbb{R}^{2n}$ in \cite{Allcock1998}. He construct a map $\mathbb{D}\to \mathbb{H}_n$ satisfying a Lagrangian condition.
	
	For map $\mathbb{R}^k\to \mathbb{R}^k$, a type of problems is the existence of embedding from a domain in Euclidean space into Heisenberg group. Because of the Lie algebra graded structure, the dimension of the domain must be smaller or equal to $n$. For more detail, readers can go to \cite{FRANCHI2007152}. A useful way to avoid the restriction of Lie algebra homomorphism is to study maps with poor smoothness condition like Hölder maps. In \cite{hajłasz2025holdercontinuousmappingsdifferential}, Piotr Hajłasz, Jacob Mirra and Armin Schikorra study the Gromov non-embedding theorem with differential form which is a crucial tool in our paper for studying the existence of Hölder lifts and their properties.
	
	The early work is from Balogh–Hoefer-Isennegger–Tyson \cite{eaf1628cba524aef8b54a84e703370bf} considering the existence of contact lifts of Lipschitz maps $\mathbb{R}^2\to \mathbb{R}^2$
	into first Heisenberg group. Their methods provides a good example when study the existence problem of the contact lifts and even the Hölder lifts. As to the lifts of $\mathbb{R}^{2n} \to \mathbb{R}^{2n}$, this problem is originally motivated by the Mostow rigidity. A classical result is that the Heisenberg group $\mathbb{H}_n$ is the ideal boundary of a Siegel domain in $\mathbb{C}^{n+1}$ under hyperbolic metric induced by Bergman metric(\cite{book},Section 4). Quasi-isometries between such spaces correspond naturally to quasisymmetries between their ideal boundaries \cite{Pansu1989MtriquesDC}. Capogna–Tang \cite{CapognaTang1995} study lifts of symplectomorphisms $\mathbb{R}^{2n} \to \mathbb{R}^{2n}$ to full-rank contact maps between Heisenberg groups $\mathbb{H}_n$. 
	
	If we just want to find a map $F$ such that the diagram commutes is easy, the key is to find a lift $F$ with the same smoothness of $f$. An important and enlightening result is given by Eeero Hakavuori, Susanna Heikkila, and Toni Ikonen \cite{2025arXiv250814647H}. They study the existence problem of lifting a smooth contact map between Carnot group. 
	
	Follow their work, We consider the existence problem of lift $F$ of a map $f$ between Carnot group with different smoothness. Let $V_1 \xrightarrow{\iota} G_1 \xrightarrow{\pi_1} H_1$ and $V_2 \xrightarrow{\iota} G_2 \xrightarrow{\pi} H_2$ be fixed central extensions of Carnot groups given by 2-cocycles $\rho_{1}$ and $\rho_{2}$, see Section 2 for details.
	
	Since the completed result in \cite{2025arXiv250814647H} about the criterion for the existence of smooth contact lifts is proved with the help of Rumin complex, we can also use this for the existence of Lipschitz or Sobolev contact lifts as long as we can get a similar result like Lemma 5.3 in \cite{2025arXiv250814647H}.
	
	Therefore, for section 3 and 4, our purpose is to generalize the result for Lipschitz or Sobolev condition.
	
	\begin{lem}
		Let $U_1 \subset \mathbb{H}_1$ and $\tilde{U}_1 \subset \pi_1^{-1}(U_1) \subset \mathbb{G}_1$ be domains. A Lipschitz contact map $f: U_1 \to \mathbb{H}_2$ admits a Lipschitz contact lift $F: \tilde{U}_1 \to \mathbb{G}_2$ if and only if
		\begin{equation}
			(f \circ \pi_1)\left( \Gamma_{\text{LIP}}(g, \tilde{U}_1) \right) \subset \Gamma_{\text{LIP}}^{\rho}(f(\pi_1(g)), \mathbb{H}_2)
		\end{equation}
		for some $g \in \tilde{U}_1$.
	\end{lem}
	and 
	\begin{lem}
		Let $U_1 \subset \mathbb{H}_1$ and $\tilde{U}_1 \subset \pi_1^{-1}(U_1) \subset \mathbb{G}_1$ be domains. A Sobolev contact map $f: U_1 \to \mathbb{H}_2$ admits a Sobolev contact lift $F: \tilde{U}_1 \to \mathbb{G}_2$ if and only if
		\begin{equation}
			(f \circ \pi_1)\left( \Gamma_{\text{LIP}}(g, \tilde{U}_1) \right) \subset \Gamma_{W^{1,p}}^{\rho}(f(\pi_1(g)), \mathbb{H}_2) \ \text{for}\  \mathcal{H}^{Q_1-1}-a.e.
		\end{equation}
		for some $g \in \tilde{U}_1$, where $f,F\in W^{1,p}$ and $p>Q_1$, $Q_1$ is homogeneous dimension of $H_1 $.
	\end{lem}
	In these two lemma, we use $F(\widetilde{\gamma}(1))=f(\gamma(1)) \cdot exp(\int_{\gamma}\left(f \circ \pi_{1}\right)^{*} \alpha_{2})$
	to define the lifting $F$ of $f$, since (1.1) and (1.2) leads to $F$ is well defined in this way. For the smoothness of lifting $F$, we find the upper gradient of $F$ with the help of this definition.  
	
	In section 5, we study the rigidity of the contact lift of quasiconformal maps getting the following result. 
	\begin{thm}
		Suppose that $f$ is a $K$-quasiconformal contact map and it admits a contact lift $F$ then $f$ is bi-Lipschitz.
	\end{thm}
	Because of the restriction of the structure of lifting, the choice of maps are greatly limited. With the help lift invariant differential form, we can use the concept of weight to distinguish the different actions of different layers in the Lie algebra. To be specific, the $i$-th layer are expending at the rate of $r^i$ when the horizontal layer are are expending at the rate of $r$.
	
	A interesting case is the quaternionic Heisenberg group $\mathbb{H}_\mathbb{H}^n$, because the lifting is from a map form $\mathbb{R}^{4n}$ to $\mathbb{R}^{4n}$, the second layer provides a very strong restriction for the algebra structure on Lie algebra.
	\begin{thm}
		Suppose $f$ is a Sobolev continuous map on $\mathbb{R}^{4n}$ which admits a contact lift $F$ on $\mathbb{H}_\mathbb{H}^n$ and $D_PF$ is full-rank for every point, then $f$ is an affine transformation.
	\end{thm}
	
	In section 6, we show a classical example Heisenberg group $\mathbb{H}^n$. Thanks to the standard symplectic form on $\mathbb{R}^{2n}$, we can find a family of smooth lifting to approximate the sobolev lifting.   
	\begin{thm}
		For any sobolev map $f$ on $\mathbb{R}^{2n}$ and its contact lift $F$ on $\mathbb{H}^n$, there are a serious of smooth $g_\varepsilon$ which admit smooth contact lift $G_\varepsilon$, $g_\varepsilon\rightarrow f$ and $G_\varepsilon\rightarrow F$ as $\varepsilon\rightarrow 0$.
	\end{thm}
	
	In section 7, we study the Hölder lift with the following theorem.
	\begin{thm}
		Suppose that  $f \in C^{0, \beta}\left(\Omega ; G_2\right)$ , where $ G_2$ is a step $n_2$ Carnot group , $\Omega \subset G_1$  is open , $X_i$ is a left-invariant  vector field belonging to the $i$-th layer and $ 0<\beta \leq 1$ . If  $B\left(x_{o}, 2 r\right) \subset \Omega $, then  for all  $0<\varepsilon<r$ and the weight of $\omega_j$ is larger than the weight of $X_i$,
		\begin{equation}
			\begin{aligned}
				&X_i(f_{\varepsilon}^{*} \omega_{1}^k) \lesssim  \varepsilon^{(i+1) \beta-i} \\
				&X_i(f_{\varepsilon}^{*} \omega_{2}^m) \lesssim  \varepsilon^{(i+1) \beta-i}\\
				&\dots \\
				&X_i(f_{\varepsilon}^{*} \omega_{n_2}^m) \lesssim  \varepsilon^{(i+1) \beta-i}
			\end{aligned}
		\end{equation}
		where  $f_{\varepsilon}=f * \eta_{\varepsilon}=\int_{\mathbb{G_1} } f(y) \eta_{\varepsilon}\left(y^{-1} x\right) d y$  and $\eta_{\varepsilon}$ is the   heat kernel on Carnot group $G_1$ .
	\end{thm}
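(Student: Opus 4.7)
The plan is to work in exponential coordinates on $G_2$ and exploit a cancellation between the mollifier derivative $X_i f_\varepsilon^{(j,k)}$ and the polynomial correction terms that make $\omega_j^k$ left-invariant. The computation is modeled on the Hölder-derivative estimates in \cite{hajłasz2025holdercontinuousmappingsdifferential}, adapted here to the central-extension structure.

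First I would write $\omega_j^k = dy^{(j,k)} + \sum_{l<j,\,\alpha} Q^{j,k}_{l,\alpha}(y^{(<j)})\,dy^{(l,\alpha)}$ using Baker--Campbell--Hausdorff in exponential coordinates, with polynomials $Q^{j,k}_{l,\alpha}$ of weight $j-l$. Pulling back and contracting with $X_i$ gives
\[
X_i(f_\varepsilon^*\omega_j^k) \;=\; X_i f_\varepsilon^{(j,k)} \;+\; \sum_{l,\alpha} Q^{j,k}_{l,\alpha}(f_\varepsilon)\, X_i f_\varepsilon^{(l,\alpha)}.
\]
Using $\int X_i^x \eta_\varepsilon = 0$, the first term rewrites as $\int[f^{(j,k)}(y) - f^{(j,k)}(x)]\,X_i^x\eta_\varepsilon(y^{-1}x)\,dy$, and a second application of BCH to the intrinsic difference $f(x)^{-1}f(y)$ yields
\[
f^{(j,k)}(y) - f^{(j,k)}(x) \;=\; [f(x)^{-1}f(y)]^{(j,k)} \;-\; \sum_{l,\alpha} Q^{j,k}_{l,\alpha}(f(x))\,[f^{(l,\alpha)}(y) - f^{(l,\alpha)}(x)] \;+\; \mathcal{R}(x,y),
\]
with the \emph{same} polynomials $Q^{j,k}_{l,\alpha}$ as in the invariant expansion. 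After integrating and summing, the $Q(f(x))\cdot X_i f_\varepsilon^{(l,\alpha)}$ pieces telescope against the pullback corrections; only the commutator $[Q^{j,k}_{l,\alpha}(f_\varepsilon)-Q^{j,k}_{l,\alpha}(f(x))]\,X_i f_\varepsilon^{(l,\alpha)}$ and the higher-order remainder $\mathcal{R}$ survive.

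The residual pieces are then estimated term by term: the intrinsic integral $\int[f(x)^{-1}f(y)]^{(j,k)}X_i^x\eta_\varepsilon\,dy$ is bounded by $\varepsilon^{j\beta - i}$ using the Hölder bound $|[f(x)^{-1}f(y)]^{(j,k)}|\lesssim d(x,y)^{j\beta}$ (Hölder continuity with respect to the Carnot--Carathéodory distance on $G_2$) together with the heat-kernel scaling $\|X_i\eta_\varepsilon\|_{L^1}\lesssim\varepsilon^{-i}$; the commutator and remainder terms each carry at least one extra factor of $\varepsilon^\beta$ coming from $\|f-f_\varepsilon\|_\infty\lesssim\varepsilon^\beta$. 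Iterating the same cancellation on the derivatives $X_i f_\varepsilon^{(l,\alpha)}$ appearing in the commutators, one shows that all residual contributions are $\lesssim\varepsilon^{(i+1)\beta - i}$ whenever $j\geq i+1$, giving the desired bound for every weight of $\omega$ exceeding $i$.

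The main obstacle is verifying that the polynomial corrections in the invariant expansion of $\omega_j^k$ match \emph{exactly} those produced by the BCH expansion of $f(x)^{-1}f(y)$, and organizing the iterated cancellation so that the higher-order remainder $\mathcal{R}$ can be controlled by one further application of the same scheme. I would structure the argument as an induction on the weight $j$, treating the Heisenberg case $j=2$ as the base case (where the cancellation reduces to the classical contact form identity) and then propagating through the central-extension tower. Once this combinatorial bookkeeping is in place, the remaining estimates reduce to standard heat-kernel derivative bounds and routine Hölder estimates on mollification.
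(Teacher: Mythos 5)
Your proposal takes a genuinely different route from the paper. The paper argues by induction on the layer $i$ of the vector field: it writes $X_i=[X_1,X_{i-1}]$, passes to $d_0$ via the identity $(f_\varepsilon^*\omega_j^k)([X_1,X_{i-1}])=(f_\varepsilon^*d_0\omega_j^k)(X_1,X_{i-1})$, decomposes $d_0\omega_j^k=\sum_\ell a_\ell\,dx_\ell\wedge\omega_{j-1}^\ell$ by weight, and then estimates the product as $X_1(f_\varepsilon^*dx_\ell)\cdot X_{i-1}(f_\varepsilon^*\omega_{j-1}^\ell)\lesssim \varepsilon^{\beta-1}\cdot\varepsilon^{i\beta-(i-1)}$, with the base case $i=1$ supplied by the earlier Theorem. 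You instead estimate $X_i(f_\varepsilon^*\omega_j^k)$ directly for every $i$, by expanding in exponential coordinates, using the mollifier cancellation and heat-kernel scaling $\|X_i\eta_\varepsilon\|_{L^1}\lesssim\varepsilon^{-i}$, and telescoping the BCH corrections against the polynomial coefficients of the left-invariant form. This is a natural and attractive route, but it is not the paper's.

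There is, however, a genuine gap in the estimates, and it is located exactly where you flagged the ``main obstacle.'' After the telescoping, two residual pieces remain: (a) the commutator $\bigl[Q^{j,k}_{l,\alpha}(f_\varepsilon)-Q^{j,k}_{l,\alpha}(f(x))\bigr]\,X_i f_\varepsilon^{(l,\alpha)}$, and (b) the BCH remainder $\int\mathcal{R}\,X_i\eta_\varepsilon$. For (a), the only uniform bound on a coordinate component is $\|f_\varepsilon^{(l,\alpha)}-f^{(l,\alpha)}\|_\infty\lesssim\varepsilon^\beta$ (the Euclidean Hölder bound; higher-weight coordinates do not decay faster because their BCH corrections carry lower-weight factors), so $|Q(f_\varepsilon)-Q(f(x))|\lesssim\varepsilon^\beta$; and for $l=1$ the mollified derivative bottoms out at $X_if_\varepsilon^{(1,\alpha)}\lesssim\varepsilon^{\beta-i}$, since there is nothing further to telescope against a horizontal coordinate. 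The product is $\varepsilon^{2\beta-i}$. For (b), $\mathcal{R}$ is only quadratic in the Euclidean increments $f(y)-f(x)$, so $|\mathcal{R}|\lesssim\varepsilon^{2\beta}$ and $\int\mathcal{R}\,X_i\eta_\varepsilon\lesssim\varepsilon^{2\beta-i}$ as well. Both residuals are therefore of size $\varepsilon^{2\beta-i}$, which for $i\geq 2$ is strictly larger than the target $\varepsilon^{(i+1)\beta-i}$; ``iterating the same cancellation'' on $X_if_\varepsilon^{(l,\alpha)}$ cannot repair this, because the iteration terminates at weight one. The paper's bracket reduction avoids this precisely by never applying $X_i$ with $i\geq 2$ directly to the mollifier: it pairs exactly one horizontal derivative $X_1$ (cost $\varepsilon^{\beta-1}$) with a lower-layer derivative $X_{i-1}$ acting on a lower-weight contact form (cost $\varepsilon^{i\beta-(i-1)}$ by the inductive hypothesis), and it is only this multiplicative structure that produces $\varepsilon^{(i+1)\beta-i}$. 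To make your direct approach work you would need either a substitute for that structure, or a sharper bound than $\varepsilon^\beta$ on the coefficient differences, neither of which your outline provides.
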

	Theorem 1.6 showcases the graded metric structure under the influence of the graded structure of Lie algebra structure even for non-differentiable case like Hölder maps. This greatly provide us a powerful tool to study the behaviors of Hölder lifts. Therefore, we have the the theorem below about the equivalent condition for the existence of Hölder lifts.  
	\begin{thm}
		Let $U_1 \subset \mathbb{H}$ and $\tilde{U}_1 \subset \pi_1^{-1}(U_1) \subset \mathbb{G}_1$ be domains and $rank(\mathbb{G}_2)=rank(\mathbb{H}_2)$. For $\beta>\frac{1}{2}$, A Hölder map $f\in C^{0,\beta}$, $f: U_1 \to \mathbb{H}_2$ admits a Hölder lift $F\in C^{0,\beta}$, $F: \tilde{U}_1 \to \mathbb{G}_2$ if and only if 
		\begin{equation}
			\int_{f \circ \pi_{1}\circ\widetilde{\gamma}} \alpha = 0
		\end{equation}
		for all Lipschitz closed curves $\widetilde{\gamma}$ on $\mathbb{G}_1$. We define $\int_{\widetilde{\gamma}} (f \circ \pi_{1})^*\alpha:=\lim_{\varepsilon\rightarrow 0}\int_{\widetilde{\gamma}} (f_\varepsilon \circ \pi_{1})^*\alpha$, where  $f_{\varepsilon}=f * \eta_{\varepsilon}=\int_{\mathbb{G}_1 } f(y) \eta_{\varepsilon}\left(y^{-1} x\right) d y$  and $\eta_{\varepsilon}$ is the heat kernel on Carnot group $\mathbb{G}_1$ . 
	\end{thm}
	We generalize the definition of pullback of the differential form under $\beta$-Hölder map $f$. With it, we similarly define the lift of $f$ with the form of integral. In the proof, we take advantage of the the Lie algebra graded-structure-preserving property for Hölder map to show the smoothness of Hölder lift $F$. To be specific, if the in the horizontal layer the movement is approximately $\varepsilon^\beta$, the $i$-th layer would go $\varepsilon^{i\beta}$(including the extension part $exp(\int_{\gamma}\left(f \circ \pi_{1}\right)^{*} \alpha_{2})$), which matches the metric structure of Carnot group.
	
	We also show a criterion that is easy to check for the existence of Hölder lift. 
	\begin{thm}
		Let $V_1 \to G_1 \to H$ and $V_2 \to G_2 \to H$ be central extensions, where $\mathfrak{h}=\mathfrak{h}^{[1]}\oplus \dots \oplus \mathfrak{h}^{[n]}$ and $V_1=V_1^{[n+1]}$ and $V_2=V_2^{[n+1]}$. Let $U$ is a simply connected domain in $H$ and $f: U \to H$ be a $\gamma$-Hölder map for $\beta>\frac{n}{n+1}$. Then $f$ admits a $\beta$-Hölder lift $F$ if and only if there is a linear map $\varphi: V_1 \to V_2$ and a $V_2$-value 1-form $\lambda$ with weight $\geq 2 $ such that the integral of $f^*\alpha_{2}$ on any horizontal curve equals to the integral of $\varphi\circ \alpha_1$ on the same curve, i.e.   
		\begin{equation*}
			\varphi\circ \int_{\gamma} \alpha_1=  \int_{\gamma} f^*\alpha_{2}.
		\end{equation*}
	\end{thm}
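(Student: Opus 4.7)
The natural strategy is to use Theorem 1.7 as a bridge: it already converts ``existence of a lift'' into ``vanishing of $\int_{f\circ\gamma}\alpha_2$ along every closed Lipschitz curve'', so the whole task is to translate this curve integral condition into the existence of the pair $(\varphi,\lambda)$.

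For the direction $(\Leftarrow)$, assume a linear $\varphi\colon V_1\to V_2$ and a $V_2$-valued 1-form $\lambda$ of weight $\geq 2$ as in the statement. Take any closed Lipschitz curve $\widetilde\gamma$ in $\mathbb{G}_1$. Lipschitz curves in a Carnot group are automatically horizontal, so $\gamma:=\pi_1\circ\widetilde\gamma$ is a closed horizontal curve in $H$, and the fact that $\widetilde\gamma$ itself closes forces the $V_1$-holonomy $\int_\gamma\alpha_1$ to vanish. The hypothesis then gives
\[
\int_{f\circ\gamma}\alpha_2 \;=\; \int_\gamma f^*\alpha_2 \;=\; \varphi\!\left(\int_\gamma\alpha_1\right) \;=\; \varphi(0) \;=\; 0,
\]
which is exactly the closed-curve criterion of Theorem 1.7; that theorem then produces the desired $\beta$-Hölder lift $F$.

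For the direction $(\Rightarrow)$, assume $F$ exists, so the closed-curve condition of Theorem 1.7 holds. I would construct $\varphi$ first. For each $v\in V_1$, use that $V_1$ is the top layer $[n+1]$ of $\mathfrak{g}_1$ and that $\mathbb{G}_1$ is bracket-generating over $H$ to choose a closed horizontal loop $\gamma_v$ in $H$ with $V_1$-holonomy $v$, and set $\varphi(v):=\int_{\gamma_v} f^*\alpha_2$. Well-definedness is immediate from Theorem 1.7 applied to the concatenation of two such loops of equal holonomy, and independence of the basepoint comes from conjugating by a horizontal connecting path (since $V_1$ is central, conjugation preserves its holonomy). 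Additivity follows from loop concatenation. The quantitative Hölder estimates of Theorem 1.6, combined with $\beta>n/(n+1)$, give continuity of $\varphi$, and a continuous additive map of finite-dimensional real vector spaces is $\mathbb{R}$-linear.

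Next I would produce $\lambda$. For a horizontal curve $\gamma\colon p\to q$ in $H$ set
\[
\Psi(\gamma):=\int_\gamma f^*\alpha_2-\varphi\!\left(\int_\gamma\alpha_1\right)\in V_2.
\]
Two horizontal curves from $p$ to $q$ differ by a closed horizontal loop, on which the two terms cancel by the definition of $\varphi$, so $\Psi(\gamma)=\Phi(q)-\Phi(p)$ for a well-defined (using simple connectivity of $U$) continuous $V_2$-valued function $\Phi$ on $U$. Take $\lambda$ to be the generalised differential of $\Phi$, interpreted via the heat-kernel regularisation $f_\varepsilon=f*\eta_\varepsilon$ already used in Theorem 1.7. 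By construction $\int_\gamma\lambda=0$ on every horizontal curve, which forces $\lambda$ to annihilate weight-$1$ vectors and therefore to have weight $\geq 2$; and the identity $\varphi\circ\int_\gamma\alpha_1=\int_\gamma f^*\alpha_2$ then holds on every horizontal curve, as required.

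The main obstacle is to make rigorous sense of $\lambda$ and of the weight condition in the Hölder setting, where $f^*\alpha_2$ is not a classical 1-form. I would work with the approximating 1-forms $\lambda_\varepsilon:=f_\varepsilon^*\alpha_2-\varphi\circ\alpha_1$, use the layer-by-layer bounds $X_i(f_\varepsilon^*\omega_j^k)\lesssim\varepsilon^{(i+1)\beta-i}$ from Theorem 1.6 to show that the integrals $\int_\gamma\lambda_\varepsilon$ stabilise along Lipschitz horizontal curves, and finally verify that the resulting limit 1-form indeed vanishes on the weight-$1$ layer. The threshold $\beta>n/(n+1)$ is exactly what makes all the iterated weight estimates summable; below it, neither the construction of $\varphi$ as a continuous map on $V_1=V_1^{[n+1]}$ nor the identification of $\lambda$ as a weight-$\geq 2$ 1-form can be guaranteed to converge.
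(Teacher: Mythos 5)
Your ($\Leftarrow$) direction is fine and is essentially the paper's argument in a cleaner form: you pass a closed Lipschitz curve $\widetilde\gamma\subset G_1$ to its projection $\gamma$, use that closedness of the horizontal lift forces $\int_\gamma\alpha_1=0$, and feed the resulting vanishing of $\int_\gamma f^*\alpha_2$ into Theorem 7.2 (the paper instead runs a cube-subdivision/monodromy argument, but the content is the same). Your construction of $\varphi$ in the ($\Rightarrow$) direction, via the $V_1$-holonomy of closed horizontal loops and the closed-curve criterion, is also a legitimate variant of the paper's route, which instead obtains $\varphi$ from the fiber decomposition $F(gk)=F(g)\Phi(k)$ of Theorem 6.6; the two constructions produce the same map, and yours is self-contained modulo checking that every $v\in V_1$ is realized as a holonomy of a loop inside $U$ (achievable holonomies form a subgroup containing a neighbourhood of $0$, so this is fixable).

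The genuine gap is in your construction of $\lambda$ and the last step of ($\Rightarrow$). Having shown that $\Psi(\gamma)=\int_\gamma f^*\alpha_2-\varphi\bigl(\int_\gamma\alpha_1\bigr)$ depends only on endpoints, so that $\Psi(\gamma)=\Phi(q)-\Phi(p)$ and $\lambda$ is the (generalized) differential of $\Phi$, you then assert that ``by construction $\int_\gamma\lambda=0$ on every horizontal curve'' and deduce both that $\lambda$ has weight $\geq 2$ and that the desired identity holds on all horizontal curves. But by construction $\int_\gamma\lambda=\Psi(\gamma)$, which is exactly the quantity the theorem asks you to prove is zero; your argument only shows $\Psi$ is \emph{exact}, not that it vanishes, so this step is circular. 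Nor is the vanishing automatic: already in the Heisenberg case $f(x,y)=(x+a,y)$ admits a lift, yet with the standard potential $\alpha=\tfrac12(x\,dy-y\,dx)$ one has $\Psi(\gamma)=\tfrac{a}{2}\bigl(y(q)-y(p)\bigr)\neq 0$ on open curves, i.e.\ the exact discrepancy $d\Phi$ can a priori have a nonzero horizontal (weight-$1$) part. Ruling that part out is precisely where all the structural hypotheses must enter --- $V_2=V_2^{[n+1]}$, $\beta>\tfrac{n}{n+1}$, and the regularity/weight estimates of Theorems 6.5--6.6 --- and it is the step the paper addresses (however tersely) by arguing that the $H\to V_2$ component $\mu_h$ of the discrepancy must vanish for weight reasons. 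Your proposal never supplies this argument, so the forward direction is incomplete at its crux.
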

	In theorem 1.8, because of lack of differential structure, we can just study under the graded structure. Although it is not as elegant as Lie algebra homomorphism, there is a special case which is easy to prove. If the central extension is just for the highest layer, we can decompose the lifting $F$ into two part: $F(hk)=f(h)\Phi(k)$, where $h\in\mathbb{H}$, $k\in V_1$ and $\Phi:V_1\to V_2$.  
	
	\section{Preliminary}
	We recall the basic properties of Carnot groups. 
	A Lie algebra  $\mathfrak{g}$  is stratified if it has a decomposition  $\mathfrak{g}=\mathfrak{g}^{[1]} \oplus \cdots \oplus \mathfrak{g}^{[s]}$  with  $\left[\mathfrak{g}^{[1]}, \mathfrak{g}^{[k]}\right]=\mathfrak{g}^{[k+1]}$  for all $ k \geq 1$ , where we denote  $\mathfrak{g}^{[k]}=\{0\}$  for  $k \geq s+1 $. A (sub-Riemannian) Carnot group is a simply connected nilpotent Lie group $ G$  with a stratified Lie algebra  $\mathfrak{g}$  equipped with an inner product on the horizontal layer  $\mathfrak{g}^{[1]}$ . For convenience, We will need an inner product not only on the horizontal layer  $\mathfrak{g}^{[1]}$ , but instead on the whole Lie algebra  $\mathfrak{g}$ . We always equip the Lie algebra of a Carnot group with an inner product for which the layers  $\mathfrak{g}^{[1]} \oplus \cdots \oplus \mathfrak{g}^{[s]}$  are pairwise orthogonal. The homogeneous dimension of $ G$  is  $Q_{G}:=\sum_{k=1}^{s} k \operatorname{dim}\left(\mathfrak{g}^{[k]}\right) $. The (horizontal) rank of  $G$  is the dimension of the horizontal layer  $\mathfrak{g}^{[1]} $.
	
	Given $ g \in G$ , we denote by  $L_{g}: G \rightarrow G$  the left-translation  $L_{g}(h)= g h$ . We identify  $T_{e} G$  with  $\mathfrak{g}$ . An absolutely continuous curve  $\gamma:[0,1] \rightarrow G$  is horizontal if for almost every  $t \in[0,1] $ its left-trivialized derivative  $\left(L_{\gamma(t)}^{-1}\right) * \dot{\gamma}(t)$  is contained in the horizontal layer $ \mathfrak{g}^{[1]} $. The length of the horizontal curve  $\gamma$  is
	\begin{equation*}
		\ell(\gamma)=\int_{0}^{1}\left\|\left(L_{\gamma(t)}^{-1}\right)_{*} \dot{\gamma}(t)\right\| d t
	\end{equation*}
	where the norm is the one induced by the inner product on  $\mathfrak{g}^{[1]}$ . The sub-Riemannian distance between two points  $g, h \in G $ is
	\begin{equation*}
		d(g, h)=\inf \{\ell(\gamma) \mid \gamma:[0,1] \rightarrow G \text { horizontal, } \gamma(0)=g, \gamma(1)=h\} .
	\end{equation*}
	
	By construction, this distance is left-invariant.
	Carnot groups also admit a one-parameter group of automorphisms, known as the dilations  $\delta_{\lambda}: G \rightarrow G, \lambda>0 $. The associated Lie algebra automorphisms, also denoted  $\delta_{\lambda}: \mathfrak{g} \rightarrow \mathfrak{g} $, are the linear maps defined on each layer of the stratification as
	\begin{equation*}
		\delta_{\lambda}(X)=\lambda^{k} X, \quad X \in \mathfrak{g}^{[k]}, \quad 1 \leq k \leq s .
	\end{equation*}
	The sub-Riemannian distance is 1 -homogeneous with respect to these dilations, i.e.,  $d\left(\delta_{\lambda} g, \delta_{\lambda} h\right)=\lambda d(g, h) $.
	
	If we want to study the properties of contact lift, we need to start with central extension and related concepts.
	Let  $\mathfrak{h}$  be a Lie algebra,  $V$  a vector space, and  $\rho: \bigwedge^{k} \mathfrak{h} \rightarrow V$  a vector-valued  $k$ -form. Here, and in what follows, $ V$  is assumed to be a finite-dimensional vector space. The Lie algebra differential  $d_{0} \rho$  of  $\rho$  is the vector-valued  $(k+1)$ -form
	\begin{equation*}
		\begin{array}{l}
			d_{0} \rho\left(X_{1}, \ldots, X_{k+1}\right) \\
			\quad=\sum_{i<j}(-1)^{i+j} \rho\left(\left[X_{i}, X_{j}\right], X_{1}, \ldots, \hat{X}_{i}, \ldots, \hat{X}_{j}, \ldots, X_{k+1}\right),
		\end{array}
	\end{equation*}
	where  $\hat{X}_{i}$  means that  $X_{i}$  is omitted from the list. We identify  $\Lambda^{1} \mathfrak{h} \simeq \mathfrak{h}$ , so vector valued 1 -forms are identified with linear maps  $\mathfrak{h} \rightarrow V$ . A  $k$  cocycle is a  $k$ -form whose Lie algebra differential vanishes.
	
	\begin{definition}
		Let $\mathfrak{h}$ be a Lie algebra and $\rho \colon \bigwedge^{2}\mathfrak{h} \to V$ a $2$-cocycle with values in a vector space $V$. Denote the Lie bracket of $\mathfrak{h}$ by $[\cdot,\cdot]_{\mathfrak{h}}$. The $\emph{central extension}$ of $\mathfrak{h}$ by $\rho$ is a Lie algebra $\mathfrak{g}$ with a direct sum decomposition $\mathfrak{h} \oplus V$ equipped with the Lie bracket
		\[
		[X + A, Y + B]_{\mathfrak{g}} = [X, Y]_{\mathfrak{h}} + \rho(X, Y), \quad \text{for } X, Y \in \mathfrak{h},\ A, B \in V.
		\]
		The direct sum decomposition $\mathfrak{g} = \mathfrak{h} \oplus V$ induces a natural inclusion $\iota_{*} \colon V \to \mathfrak{g}$ and a projection $\pi_{*} \colon \mathfrak{g} \to \mathfrak{h}$ which are Lie algebra homomorphisms. The central extension is denoted by
		\[
		V \stackrel{\iota_{*}}{\to} \mathfrak{g} \stackrel{\pi_{*}}{\to} \mathfrak{h}.
		\]
		If $G$ and $H$ are simply connected Lie groups with Lie algebras $\mathfrak{g}$ and $\mathfrak{h}$, respectively, we also refer to the induced short exact sequence
		\[
		V \stackrel{\iota}{\to} G \stackrel{\pi}{\to} H
		\]
		as a central extension of $H$ by $\rho$.
	\end{definition}
	
	For the purpose of studying the functions that admit contact lift, we are supposed to define the central extension of Carnot groups. 
	
	\begin{definition}
		A central extension $V \xrightarrow{\iota} G \xrightarrow{\pi} H$ by $\rho$ is called \emph{stratified} if the following properties hold.
		\begin{enumerate}
			\item[(i)] The Lie algebras $\mathfrak{h}$ and $\mathfrak{g}$ are stratified and the vector space $V$ is graded.
			\item[(ii)] The inclusion $\iota_{*} \colon V \to \mathfrak{g}$ and the projection $\pi_{*} \colon \mathfrak{g} \to \mathfrak{h}$ are graded with respect to the gradings from (i).
		\end{enumerate}
		A stratified central extension $V \to G \xrightarrow{\iota} H$ by $\rho$ is a \emph{central extension of Carnot groups} if the following hold.
		\begin{enumerate}
			\item[(iii)] The Lie groups $G$ and $H$ are Carnot groups and $V$ is an inner product space.
			\item[(iv)] The inclusion $\iota_{*} \colon V \to \mathfrak{g}$ is an isometry and $\pi_{*} \colon \mathfrak{g} \to \mathfrak{h}$ is a submetry with respect to the inner product structures from (iii).
		\end{enumerate}
	\end{definition}
	
	\begin{remark}
		Consider Carnot groups $G$ and $H$ together with a graded homomorphism $\pi \colon G \to H$ for which $\pi_{*} \colon \mathfrak{g} \to \mathfrak{h}$ is a submetry and $\ker(\pi_{*})$ is contained in the center of $\mathfrak{g}$.
	\end{remark}
	\begin{definition}
		Let $X$ and $Y$ be metric spaces and $0 < \gamma \leq 1$. A mapping $f : X \to Y$ is $\gamma$-Hölder continuous if
		\[
		[f]_{\gamma} = [f]_{C^{0,\gamma}} := \sup\left\{\frac{d_{Y}(f(x),f(y))}{d_{X}(x,y)^{\gamma}} : x \neq y\right\} < \infty. 
		\]
		The space of all $\gamma$-Hölder continuous mappings $f : X \to Y$ will be denoted by $C^{0,\gamma}(X;Y)$.
		
		$C^{0,\gamma}_{b}(X;Y)$ is the space of bounded $\gamma$-Hölder continuous mappings i.e., the image of every mapping is contained in a ball in $Y$.
	\end{definition}
	
	Note that $C_{\mathrm{b}}^{0,\gamma}(X;\mathbb{R}^{m})$ is a Banach space with respect to the norm
	\[
	\|f\|_{C^{0,\gamma}} = \|f\|_{\infty} + [f]_{\gamma}.
	\]

	$C_{\mathrm{b}}^{0,\gamma}(X)$ is a real Banach space. Moreover if $f, g \in C_{\mathrm{b}}^{0,\gamma}(X)$, then $fg \in C_{\mathrm{b}}^{0,\gamma}(X)$ and $\|fg\|_{C^{0,\gamma}} \leq \|f\|_{C^{0,\gamma}} \|g\|_{C^{0,\gamma}}$.

	\section{Preservation of closed horizontal curves for Lipschitz contact lift}
	In this section, we connect the existence of Lipschitz contact lifts to a condition on closed horizontal curves.
	
	For any horizontal curve, we can express its location in integral form on  central extension of Carnot groups.(cf. \cite{2025arXiv250814647H}, Lemma 3.5])
	\begin{lem}
		Let $V \overset{\iota}{\to} G \overset{\pi}{\to} H$ be a central extension of Carnot groups by $\rho$ such that $rank(G) = rank(H)$. There exists a $1$-form $\alpha \in \Omega^1(H; V)$ such that
		\begin{enumerate}
			\item[(i)] $d\alpha = \rho$, and
			\item[(ii)] if $\gamma: [0,1] \to G$ is a horizontal curve starting from $\gamma(0) = e_G$, then
			\begin{equation*}
				log_G(\gamma(1)) = log_H(\pi \circ \gamma(1)) + \int_{\pi \circ \gamma} \alpha \in \mathfrak {h} \oplus V = \mathfrak{g} .
			\end{equation*}
		\end{enumerate}
	\end{lem}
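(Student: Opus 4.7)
The plan is to define $\alpha$ by descent: show that $d(\pi_V\log_G)-\omega_V$, which is a priori a 1-form on $G$, actually descends to a 1-form on $H$ satisfying $d\alpha=\rho$; then (ii) will follow by integrating this identity along horizontal curves. Decompose the left Maurer--Cartan form of $G$ as $\omega_G=\omega_H+\omega_V$ along $\mathfrak g=\mathfrak h\oplus V$. The structural identity $[X,Y]_V=\rho(\pi_* X,\pi_* Y)$ of the central extension, combined with $d\omega_V(X,Y)=-\omega_V([X,Y])$ on left-invariant fields, yields
\[
d\omega_V=-\pi^*\rho,
\]
where $\rho$ is viewed as a left-invariant $V$-valued 2-form on $H$ whose closedness is the Chevalley--Eilenberg reformulation of the cocycle condition $d_0\rho=0$.

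Let $\phi\colon G\to V$ be $\phi(g)=\pi_V\log_G(g)$, with $\pi_V\colon\mathfrak g\to V$ the projection along $\mathfrak h$, and set $\beta:=d\phi-\omega_V\in\Omega^1(G;V)$. We claim $\beta$ is basic for $\pi$. First, for $X\in V$ the curve $c(t)=g\exp(tX)$ satisfies $\log_G c(t)=\log_G g+tX$, because $V$ is central in $\mathfrak g$ and BCH truncates; hence $d\phi(\dot c(0))=X=\omega_V(\dot c(0))$ and $\beta$ vanishes on vertical vectors. Second, every $k\in\exp V$ lies in the centre of $G$, so right translation $R_k$ coincides with $L_k$ and preserves the left-invariant form $\omega_V$, while $\phi\circ R_k=\phi+\log_G k$ gives $R_k^*d\phi=d\phi$; therefore $\beta$ is $R_k$-invariant. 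These two properties together produce a unique $\alpha\in\Omega^1(H;V)$ with $\pi^*\alpha=\beta$, i.e.\ $d\phi=\omega_V+\pi^*\alpha$. Taking $d$ and using $d\omega_V=-\pi^*\rho$ gives $\pi^*(d\alpha-\rho)=0$, and since $\pi$ is a submersion this forces $d\alpha=\rho$, which is (i).

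For (ii), let $\gamma\colon[0,1]\to G$ be horizontal with $\gamma(0)=e_G$. Stratification together with $\operatorname{rank}G=\operatorname{rank}H$ forces $V^{[1]}=0$, so $\mathfrak g^{[1]}\subset\mathfrak h$ and $\omega_V(\dot\gamma)\equiv 0$. Integrating $d\phi$ along $\gamma$ therefore gives
\[
\pi_V\log_G\gamma(1)=\phi(\gamma(1))=\int_{\pi\circ\gamma}\alpha,
\]
and combining with $\pi_*\log_G\gamma(1)=\log_H(\pi\circ\gamma(1))$, which follows from $\pi\circ\exp_G=\exp_H\circ\pi_*$ for homomorphisms of simply connected Lie groups, yields the decomposition in (ii). The main subtle point is the descent in the second step: one needs centrality of $V$ both to kill the BCH corrections along the fiber direction and to promote $\exp V$ into the centre of $G$, since these are precisely what allow $d\phi-\omega_V$ to forget its $G$-level data and become a genuine pullback from $H$.
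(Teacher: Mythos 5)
Your argument is correct. Note first that the paper does not actually prove this lemma: it is quoted verbatim from Lemma~3.5 of the cited Hakavuori--Heikkil\"a--Ikonen preprint, so there is no in-paper proof to compare against; your write-up therefore serves as a self-contained justification. The structure is sound: the identity $d\omega_V=-\pi^*\rho$ for the $V$-component of the Maurer--Cartan form is exactly the Chevalley--Eilenberg computation $d\omega_V(X,Y)=-\omega_V([X,Y]_{\mathfrak g})=-\rho(\pi_*X,\pi_*Y)$ on left-invariant fields, extended to all of $G$ by left-invariance of both sides; the two basicness checks for $\beta=d\phi-\omega_V$ (vanishing on vertical vectors via the BCH truncation $\log_G(g\exp tX)=\log_G g+tX$ for central $X$, and $R_k$-invariance via $R_k=L_k$ and $\phi\circ R_k=\phi+\log_G k$) are precisely what is needed, since $\pi\colon G\to H$ is a principal bundle for the connected central subgroup $\ker\pi=\exp V$; and the rank hypothesis indeed forces $V^{[1]}=0$ because the extension is stratified, so $\omega_V$ annihilates horizontal velocities and integrating $d\phi=\omega_V+\pi^*\alpha$ gives (ii), with the $\mathfrak h$-component handled by $\pi_*\circ\log_G=\log_H\circ\pi$. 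Two routine points you leave implicit and should state if this is written up: the equation $d\alpha=\rho$ is to be read with $\rho$ extended to a left-invariant $V$-valued $2$-form on $H$ (this is the intended meaning in the paper, cf.\ the Heisenberg example $\rho=dx\wedge dy$), and the descended $\alpha$ is smooth because it can be computed through local sections of $\pi$, so that integrating $d\phi$ along a merely absolutely continuous horizontal curve is legitimate by the a.e.\ chain rule for the smooth function $\phi$.
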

	
	Hence, we can judge if a curve on Carnot group  $H$ can be lifted to a larger Carnot group $G$.
	Given a Carnot group $\mathbb{H}$, a point $h \in \mathbb{H}$, and a domain $U \subset \mathbb{H}$, we denote by $\Gamma_{\text{LIP}}(h, U)$ the collection of closed Lipschitz curves $\gamma: [0,1] \to U$ based at $\gamma(0) = h = \gamma(1)$.
	
	For a central extension $V \to \mathbb{G} \to \mathbb{H}$ of Carnot groups by a 2-cocycle $\rho \in \Omega^2(\mathbb{H}; V)$, we denote by $\Gamma_{\text{LIP}}^{\rho}(h, U) \subset \Gamma_{\text{LIP}}(h, U)$ the subcollection which admit a horizontal lift to $\mathbb{G}$ that is closed. Representing $\mathbb{G}$ as a direct product $\mathbb{G} \simeq \hat{\mathbb{G}} \times W$, where $W \subset V$ is the horizontal component of the central extension $V \to \mathbb{G} \to \mathbb{H}$, the subcollection can be characterized as
	\[
	\Gamma_{\text{LIP}}^{\rho}(h, U) = \left\{ \gamma \in \Gamma_{\text{LIP}}(h, U) : \int_{\gamma} \alpha = 0 \right\},
	\]
	where $\alpha \in \Omega^1(\mathbb{H}; V)$ satisfies $d\alpha = \rho$. Note that the collection is independent of the specific potential since $\int_{\gamma} d\psi = 0$ for any closed curve $\gamma \in \Gamma_{\text{LIP}}(h, U)$ and any exact 1-form $d\psi \in \Omega^1(\mathbb{H}; V)$. 
	
	In fact, we can use the lifting of horizontal curves on $H_2$ to identify if a map admits a contact lift.
	
	\begin{lem}
		Let $U_1 \subset \mathbb{H}_1$ and $\tilde{U}_1 \subset \pi_1^{-1}(U_1) \subset \mathbb{G}_1$ be domains. A Lipschitz contact map $f: U_1 \to \mathbb{H}_2$ admits a Lipschitz contact lift $F: \tilde{U}_1 \to \mathbb{G}_2$ if and only if
		\begin{equation}\label{curve}
			(f \circ \pi_1)\left( \Gamma_{\text{LIP}}(g, \tilde{U}_1) \right) \subset \Gamma_{\text{LIP}}^{\rho}(f(\pi_1(g)), \mathbb{H}_2)
		\end{equation}
		for some $g \in \tilde{U}_1$.
	\end{lem}
	\begin{proof}
		If $F\colon\tilde{U}_{1}\to G_{2}$ is a Lipschitz contact lift of $f\colon U_{1}\to H_{2}$ and $\gamma\in\Gamma_{\text{LIP}}(g,\tilde{U}_{1})$ is a closed horizontal curve based at an arbitrary $g\in\tilde{U}_{1}$, then $F\circ\gamma$ is a closed horizontal curve in $G_{2}$ based at $F(g)$ whose projection to $H_{2}$ is the closed horizontal curve $\pi_{2}\circ F\circ\gamma=f\circ\pi_{1}\circ\gamma$ based at $f(\pi_{1}(g))$. Thus \eqref{curve} follows.
		
		For the converse direction, we first observe that by Lemma 4.1 in \cite{2025arXiv250814647H}, it suffices to consider the case $\mathrm{rank}(G_{2})=\mathrm{rank}(H_{2})$. Suppose that \eqref{curve} holds for some $g\in\tilde{U}_{1}$. We will construct the lift $F$ by a standard path lifting argument seen. We refer the reader to Section 8 in
		\cite{2019Agrachev}  for background on the endpoint map.
		
		Fix a basepoint $p\in\pi_{2}^{-1}(f(\pi_{1}(g)))$. We construct a Lipschitz contact lift $F\colon\tilde{U}_{1}\to G_{2}$ of $f$ with $F(g)=p$. Recall that for every control $u\in L^{2}([0,1],\mathfrak{g}_{1}^{[1]})$, there exists a corresponding trajectory $\gamma_{u}\colon[0,1]\to G_{1}$ obtained as the unique solution of the Cauchy problem
		\begin{equation*}
			\dot{\gamma}_{u}(t)=(L_{\gamma_{u}(t)})_{*}u(t),\quad\gamma_{u}(0)=g.
		\end{equation*}
		This defines the endpoint map
		\begin{equation*}
			\text{End}_{g}\colon L^{2}([0,1];\mathfrak{g}_{1}^{[1]})\to G_{1},\quad\text{End}_{g}(u)=\gamma_{u}(1),
		\end{equation*}
		which by  Proposition 8.5 in \cite{2019Agrachev} is Lipschitz. We will also consider the two analogously defined endpoint maps
		\begin{equation*}
			\text{End}_{f(\pi_{1}(g))}\colon L^{2}([0,1];\mathfrak{h}_{2}^{[1]})\to H_{2},
		\end{equation*}
		\begin{equation*}
			\text{End}_{p}\colon L^{2}([0,1];\mathfrak{g}_{2}^{[1]})\to G_{2}.
		\end{equation*}
		We have $\mathfrak{h}_{2}^{[1]}=\mathfrak{g}_{2}^{[1]}$, so these latter two endpoint maps have the same domain. We note that $\text{End}_{f(\pi_{1}(g))}=\pi_{2}\circ\text{End}_{p}$ by construction.
		
		Let $W\subset L^{2}([0,1];\mathfrak{g}_{1}^{[1]})$ be the open subset of controls whose trajectories satisfy $\gamma_{u}([0,1])\subset\tilde{U}_{1}$. Since $\tilde{U}_{1}$ is a domain, $\text{End}_{p}(W)=\tilde{U}_{1}$. The Lipschitz contact map $f\circ\pi_{1}\colon\tilde{U}_{1}\to H_{2}$ induces a bounded map $(f\circ\pi_{1})_{*}\colon W\to L^{2}([0,1];\mathfrak{h}_{2}^{[1]})$ given by
		\begin{equation*}
			((f\circ\pi_{1})_{*}u)(t)=(L_{f\circ\pi_{1}\circ\gamma_{u}(t)}^{-1})_{*}\frac{d}{dt}(f\circ\pi_{1}\circ\gamma_{u}(t)).
		\end{equation*}
		We define $\Phi:=\text{End}_{p}\circ(f\circ\pi_{1})_{*}\colon W\to G_{2}$. Assumption \eqref{curve} implies that the value $\Phi(u)$ depends only on $\text{End}_{g}(u)$ for $u\in W$. Thus there exists a uniquely defined map $F\colon\tilde{U}_{1}\to G_{2}$ for which $F\circ\text{End}_{g}\vert_{W}=\Phi$.
		
		By a perturbation argument, we see that, for any $h\in\tilde{U}_{1}$, there exists a control $u\in W\cap\text{End}_{g}^{-1}(h)$ such that the differential $(d\text{End}_{g})_{u}$ has full rank. So, by the implicit function theorem, every $h\in\tilde{U}_{1}$ has an open neighborhood $W \subset \tilde{U}_{1}$ such that a smooth right inverse $\Psi \colon V \to W$ of $\mathrm{End}_g$ exists. Therefore we have $\left.F\right|_{W} = \Phi \circ \Psi$. Moreover, $\pi_{2} \circ F = f \circ \pi_{1}$. Indeed, if $\gamma \colon [0,1] \to \tilde{U}_{1}$ is a horizontal curve, then $F \circ \gamma$ is a horizontal lift of $f \circ \pi_{1} \circ \gamma$ by the construction, the equality $\mathrm{End}_f(\pi_1(g)) = \pi_2 \circ \mathrm{End}_p$, and the assumption , and thus the lifting property follows. It also follows that $F$ is contact. 
		
		If $\gamma:[0,1]\to H_1$ is a horizontal curve starting from $\gamma(0)=e_{H_1}$, then 
		\begin{equation*}
			\widetilde{\gamma}(1)=\gamma(1) \cdot exp(\int_{\gamma}\alpha)
		\end{equation*}
		For the contact lift $F$ of $f$, then
		\begin{equation*}
			F(\widetilde{\gamma}(1))=f(\gamma(1)) \cdot exp(\int_{\gamma}\left(f \circ \pi_{1}\right)^{*} \alpha_{2})
		\end{equation*}
		By the assumption, we know $\alpha_2\in\Omega^1(H_2;V_2)$, so locally there is a constant $C$ such that $\left||\alpha_2|\right|\leq C$, then $\left||(f\circ \pi_1)_*\alpha_2|\right|\leq LC$ , where $L$ is the Lipschitz constant of $f$.
		Thus, $F$ is Lipschitz contact lift of $f$.
		
	\end{proof}
	
	\section{Preservation of closed horizontal curves for Sobolev contact lift}
	In this section, we connect the existence of Sobolev contact lifts to a condition on closed horizontal curves.
	
	We denote by $\Gamma_{W^{1,p}}(h, U)$ the collection of closed $W^{1,p}$ admissible curves $\gamma: [0,1] \to U$ based at $\gamma(0) = h = \gamma(1)$.
	
	We denote by $\Gamma_{W^{1,p}}^{\rho}(h, U) \subset \Gamma_{W^{1,p}}(h, U)$ the subcollection which admit a horizontal lift to $\mathbb{G}$ that is closed. The subcollection can be characterized as
	\begin{equation*}
		\Gamma_{W^{1,p}}^{\rho}(h, U) = \left\{ \gamma \in \Gamma_{W^{1,p}}(h, U) : \int_{\gamma} \alpha = 0 \right\}.
	\end{equation*}
	
	So, we can draw a similar conclusion as Lipschitz version just for Sobolev contact lift.
	\begin{lem}
		Let $U_1 \subset \mathbb{H}_1$ and $\tilde{U}_1 \subset \pi_1^{-1}(U_1) \subset \mathbb{G}_1$ be domains. A Sobolev contact map $f: U_1 \to \mathbb{H}_2$ admits a Sobolev contact lift $F: \tilde{U}_1 \to \mathbb{G}_2$ if and only if
		\begin{equation}\label{curve-S}
			(f \circ \pi_1)\left( \Gamma_{\text{LIP}}(g, \tilde{U}_1) \right) \subset \Gamma_{W^{1,p}}^{\rho}(f(\pi_1(g)), \mathbb{H}_2) \ \text{for}\  \mathcal{H}^{Q_1-1}-a.e.
		\end{equation}
		for some $g \in \tilde{U}_1$, where $f,F\in W^{1,p}$ and $p>Q_1$, $Q_1$ is homogeneous dimension of $H_1 $.
	\end{lem}
	\begin{proof}
		For any $\gamma \in \Gamma_{\text{LIP}}(g, \tilde{U}_1)$, because $F\in W^{1,p}$ and $D_pF$ is a Lie algebra homomorphism , $F\circ \gamma$ is a horizontal curve. By $D_pF \in L^p (H_1)$, we can take a tubular neighbor $U_r$ of $\gamma$ with radius r , then
		\begin{equation*}
			\int_{U_r}|D_p (f)|^p < \infty
		\end{equation*} 
		$f$ is differential on almost every curve and $f\circ \pi_1\circ$$\gamma \in \Gamma_{W^{1,p}}$ for $\mathcal{H}^{Q_1-1}-a.e$. Therefore we have $\int_{\gamma}\left(f \circ \pi_{1}\right)^{*} \alpha_{2}$ exists $\mathcal{H}^{Q_1-1}-a.e$. Due to $F$ is contact lift $\pi_2\circ F \circ \gamma=f \circ \pi_1\circ \gamma$, we have $\int_{\gamma}\left(f \circ \pi_{1}\right)^{*} \alpha_{2}=0$ for $\mathcal{H}^{Q_1-1}-a.e$. Thus \eqref{curve-S} follows.
		
		For the converse direction, we first observe that $f\in \Gamma_{W^{1,p}}$ , $p>Q_1$, so we can take $f$ as Hölder continuous. And also because $L^p([0,1])\subset L^2([0,1])$, we can find that the conditions for endpoint maps are all satisfied for $f \circ \pi_1\circ \gamma$ $\mathcal{H}^{Q_1-1}-a.e$. Therefore we can use the same method as the proof of Lipschitz contact lift to get a lift $F$ of $f$. Obviously, $F$ is continuous, since $\int_{\gamma}\left(f \circ \pi_{1}\right)^{*} \alpha_{2}=0$ for $\mathcal{H}^{Q_1-1}-a.e$, we can take a Lipschitz $\widetilde{\gamma}$ to define $F$ without worrying about the existence of the limit of the integral
		\begin{equation*}
			F(\widetilde{\gamma}(1))=f(\gamma(1)) \cdot exp(\int_{\widetilde{\gamma}}\left(f \circ \pi_{1}\right)^{*} \alpha_{2}),
		\end{equation*}
		where $\widetilde{\gamma}(0)=e_G$.
		
		The last step is to prove $F\in W^{1,p}$. We can assume $f(e)=e$, so $F(e)=e$. For almost every Lipschitz curve $\widetilde{\gamma}$, $\widetilde{\gamma}(0)=e$
		\begin{equation*}
			\begin{aligned}
				&d_{CC}(F(\widetilde{\gamma}(1)),e)\\ 
				&\leq d_{CC}(e,exp(\int_{\widetilde{\gamma}}\left(f \circ \pi_{1}\right)^{*} \alpha_{2}))\\
				&+d_{CC}(exp(\int_{\widetilde{\gamma}}\left(f \circ \pi_{1}\right)^{*} \alpha_{2}),f(\gamma(1)) \cdot exp(\int_{\widetilde{\gamma}}\left(f \circ \pi_{1}\right)^{*} \alpha_{2}))\\
				&= d_{CC}(e,exp(\int_{\widetilde{\gamma}}\left(f \circ \pi_{1}\right)^{*} \alpha_{2}))+d_{CC}(e,f(\gamma(1))\\
				&\leq \int_{\widetilde{\gamma}}|\left(f \circ \pi_{1}\right)^{*} \alpha_{2}| + \int_{\widetilde{\gamma}}|\nabla_H f|ds\\
				&=\int_{\widetilde{\gamma}}(|\left(f \circ \pi_{1}\right)^{*} \alpha_{2}|+|\nabla_H f|)ds,
			\end{aligned}
		\end{equation*}
		so $|\left(f \circ \pi_{1}\right)^{*} \alpha_{2}|+|\nabla_H f|$ is the upper gradient of $F$ and it belongs to $L^p$. Therefore $F\in W^{1,p}(G_1;G_2)$
	\end{proof}
	
	If we want to study the contact lift of quasiconformal maps we still need to know the fiber component of a sobolev lift and the decomposition of a Lipschitz or sobolev contact lift. 
	\begin{lem}
		Let $V_1 \to G_1 \to H_1$ and $V_2 \to G_2 \to H_2$ be central extensions of Carnot groups such that $\mathrm{rank}(G_2) = \mathrm{rank}(H_2)$. Let $U_1 \subset H_1$ be a domain and $F: \pi_1^{-1}(U_1) \to G_2$ a Sobolev contact lift of a sobolev contact map $f: U_1 \to H_2$. Then there exists a Lie group homomorphism $\Phi: V_1 \to V_2$ such that $F(gk) = F(g)\Phi(k)$ for all $g \in \pi_1^{-1}(U_1)$ and $k \in V_1$.
	\end{lem}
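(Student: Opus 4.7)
The plan is to exhibit $\Phi$ as the fibre displacement $F(g)^{-1}F(gk)$, prove its independence of $g$ via a closed-loop argument using hypothesis (4.1), and then verify multiplicativity.

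I would begin by setting $\Psi(g,k) := F(g)^{-1}F(gk)$. Because $k \in V_{1} = \ker \pi_{1,*}$ gives $\pi_{1}(gk) = \pi_{1}(g)$, the lifting relation $\pi_{2} \circ F = f \circ \pi_{1}$ yields $\pi_{2}(F(gk)) = \pi_{2}(F(g))$, so $\Psi(g,k) \in V_{2}$. The Sobolev--Morrey embedding on $G_{1}$ (valid since $p > Q_{1}$) makes $F$ continuous, hence $g \mapsto \Psi(g,k)$ is continuous.

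The core step is to show $\Psi(\cdot,k)$ is locally constant. For $g_{0}, g \in \pi_{1}^{-1}(U_{1})$ joined by a short Lipschitz horizontal curve $\widetilde{\alpha}$, I choose Lipschitz horizontal curves $\widetilde{\tau}_{0}\colon g_{0} \to g_{0}k$ and $\widetilde{\tau}\colon g \to gk$ inside $\pi_{1}^{-1}(U_{1})$. Because $k$ is central in $G_{1}$, left translation $L_{k}$ coincides with right translation by $k$ and preserves the horizontal distribution, so $L_{k}\widetilde{\alpha}$ is a horizontal curve from $g_{0}k$ to $gk$. Concatenating yields the closed Lipschitz horizontal loop
\[
    \Gamma := \widetilde{\tau}_{0} \ast (L_{k}\widetilde{\alpha}) \ast \widetilde{\tau}^{-1} \ast \widetilde{\alpha}^{-1}
\]
at $g_{0}$. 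Since $\pi_{1}(k) = e_{H_{1}}$, one has $\pi_{1}\circ L_{k}\widetilde{\alpha} = \pi_{1}\circ\widetilde{\alpha}$, so the contributions of $L_{k}\widetilde{\alpha}$ and $\widetilde{\alpha}$ cancel inside $\int_{\Gamma}(f\circ\pi_{1})^{*}\alpha_{2}$. Applying hypothesis (4.1) to $\Gamma$ forces $\int_{\Gamma}(f\circ\pi_{1})^{*}\alpha_{2} = 0$, leaving $\int_{\widetilde{\tau}_{0}}(f\circ\pi_{1})^{*}\alpha_{2} = \int_{\widetilde{\tau}}(f\circ\pi_{1})^{*}\alpha_{2}$. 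Combining with the integral representation $F(qk) = F(q)\cdot\exp\bigl(\int_{\widetilde{\tau}}(f\circ\pi_{1})^{*}\alpha_{2}\bigr)$ derived in the proof of Lemma~4.1, this yields $\Psi(g_{0},k) = \Psi(g,k)$. Connectedness of $\pi_{1}^{-1}(U_{1})$ (a $V_{1}$-bundle over the connected domain $U_{1}$) together with continuity of $\Psi$ promote this to global constancy, so $\Phi(k) := \Psi(g,k)$ is independent of $g$ and satisfies $F(gk) = F(g)\Phi(k)$.

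Multiplicativity follows from associativity in $G_{2}$:
\[
    F(g)\Phi(k_{1}k_{2}) = F(gk_{1}k_{2}) = F((gk_{1})k_{2}) = F(gk_{1})\Phi(k_{2}) = F(g)\Phi(k_{1})\Phi(k_{2}),
\]
so $\Phi(k_{1}k_{2}) = \Phi(k_{1})\Phi(k_{2})$. Since $V_{1}, V_{2}$ are abelian Lie groups (vector spaces under addition) and $\Phi$ is continuous, $\Phi$ is linear and hence a Lie group homomorphism. The main obstacle I anticipate is the $\mathcal{H}^{Q_{1}-1}$-almost-everywhere nature of condition (4.1): the vanishing of $\int_{\Gamma}(f\circ\pi_{1})^{*}\alpha_{2}$ is guaranteed only for Lipschitz loops in a full-measure family at $g_{0}$. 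I would therefore choose $\widetilde{\alpha}, \widetilde{\tau}_{0}, \widetilde{\tau}$ and the basepoint $g_{0}$ generically so that $\Gamma$ lies in this family, and then extend the equality $\Psi(g_{0},k) = \Psi(g,k)$ to all $g_{0}, g \in \pi_{1}^{-1}(U_{1})$ by the continuity of $\Psi$.
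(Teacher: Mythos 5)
Your proposal reaches the same conclusion but by a genuinely different route than the paper. The paper works metrically: after writing $F(gk)=F(g)\tilde\Phi(g,k)$ with $\tilde\Phi(g,k)\in V_2$, it uses left-invariance of the distance and centrality of $V_1$, $V_2$ to obtain
\[
d(\tilde\Phi(g,k),\tilde\Phi(h,k))\le d(F(gk),F(hk))+d(F(h),F(g)),
\]
and then an upper-gradient estimate along a geodesic $\gamma$ from $h$ to $g$ and its translate $\gamma\cdot k$ shows that $g\mapsto\tilde\Phi(g,k)$ is itself a $W^{1,p}$ map into $V_2$. Since $\mathrm{rank}(G_2)=\mathrm{rank}(H_2)$ forces $V_2$ to contain no horizontal directions, the only Sobolev maps into $V_2$ are constants, and local constancy follows. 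This is self-contained and avoids the integral formula entirely. Your approach is instead topological/integral: you build a closed Lipschitz horizontal loop $\Gamma$ from $\widetilde\tau_0$, $L_k\widetilde\alpha$, $\widetilde\tau^{-1}$, $\widetilde\alpha^{-1}$, note that $\pi_1\circ L_k\widetilde\alpha=\pi_1\circ\widetilde\alpha$ makes the two transversal contributions cancel, invoke condition (4.1) via Lemma~4.1 to kill $\int_\Gamma(f\circ\pi_1)^*\alpha_2$, and conclude via the integral representation $F(gk)=F(g)\exp\bigl(\int_{\widetilde\tau}(f\circ\pi_1)^*\alpha_2\bigr)$. This buys a picture of \emph{why} the displacement is constant (holonomy vanishing on loops), at the cost of routing through Lemma~4.1 rather than working directly from the Sobolev structure of $F$.

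Three points in your argument still need attention. First, the formula $F(qk)=F(q)\exp\bigl(\int_{\widetilde\tau}(f\circ\pi_1)^*\alpha_2\bigr)$ is only established in the paper (via Lemma~3.1) for curves based at $e_{G_1}$; for a curve $\widetilde\tau$ based at $q\ne e$, Lemma~3.1 applied to $F(q)^{-1}\cdot(F\circ\widetilde\tau)$ yields an integral over the \emph{left-translated} projected curve $(f\circ\pi_1(q))^{-1}\cdot(f\circ\pi_1\circ\widetilde\tau)$, and since the potential $\alpha_2$ with $d\alpha_2=\rho_2$ is generally not left-invariant you should justify why the resulting integrals agree, or else base all curves at the identity. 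Second, condition (4.1) is not a hypothesis of the statement; you should explicitly invoke the forward implication of Lemma~4.1 to extract it from the assumed existence of $F$ before using it. Third, the $\mathcal{H}^{Q_1-1}$-a.e.\ caveat is the genuine obstruction in this route: you propose to choose the loop generically and finish by continuity, which is plausible but needs care, since the a.e.\ family is a family of curves and you must ensure that through any pair $g_0,g$ one can thread a loop lying in it, rather than only through a.e.\ pair. The paper's upper-gradient argument sidesteps this by never integrating the pullback form along individual curves.
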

	\begin{proof}
		By the lift assumption, we have
		\[
		\pi_2 \circ F(gk) = f \circ \pi_1(gk) = f \circ \pi_1(g) = \pi_2 \circ F(g).
		\]
		Thus, for each $g \in \pi_1^{-1}(U_1)$ and $k \in V_1$, there exists some $\tilde{\Phi}(g,k) \in V_2$ such that $F(gk) = F(g)\tilde{\Phi}(g,k)$. We claim that $\tilde{\Phi}(g,k)$ is constant in $g \in \pi_1^{-1}(U_1)$. That is, there is a well defined map $\Phi: V_1 \to V_2$ such that $\tilde{\Phi}(g,k) = \Phi(k)$ for $g \in \pi_1^{-1}(U_1)$ and $k \in V_1$. By assumption, $U_1$ is connected, so it suffices to prove that $g \mapsto \tilde{\Phi}(g,k)$ is locally constant for fixed $k \in V_1$.
		
		Let $B'$ be a ball that is compactly contained in $\pi_1^{-1}(U_1)$. Let $B$ be a ball with the same center and half the radius.
		
		Let $g, h \in B$. Since the extensions are central, elements of $V_1$ commute with elements of $G_1$, and similarly for $V_2$. By left-invariance of the distance, we compute that
		\[
		\begin{aligned}
			d(\tilde{\Phi}(g,k), \tilde{\Phi}(h,k)) &= d(F(g)\tilde{\Phi}(g,k), F(g)\tilde{\Phi}(h,k)) \\
			&\leq d(F(g)\tilde{\Phi}(g,k), F(h)\tilde{\Phi}(h,k)) \\
			&\quad + d(F(h)\tilde{\Phi}(h,k), F(g)\tilde{\Phi}(h,k)) \\
			&= d(F(gk), F(hk)) + d(F(h), F(g)) \\
		\end{aligned}
		\]
		Because $F\in W^{1,p}$ and $|\nabla_H F|\in L^p$ is the upper gradient of $F$. Taking a geodesic curve connecting $h$ and $g$, then $\gamma\cdot k$ is a geodesic curve connecting $hk$ and $gk$. Therefore, we have
		\begin{equation*}
			\begin{aligned}
				&d(F(h), F(g))+d(F(gk), F(hk))   \\
				&\leq \int_{\gamma}|\nabla_H F|+\int_{\gamma\cdot k}|\nabla_H F|\\
				&\leq \int_{\gamma}|\nabla_H F|+\int_{\gamma}|\nabla_H F|\circ r_k,
			\end{aligned}
		\end{equation*}
		where $r_k$ is the right multiplication by k. Therefore, the $ \int_{\gamma}|\nabla_H F|+\int_{\gamma}|\nabla_H F|\circ r_k$ is the upper gradient of $\tilde{\Phi}(g,k)$, which means $\tilde{\Phi}(g,k)\in W^{1,p}$.
		By the equal rank assumption, $V_2$ does not contain any horizontal directions. Hence the only possible sobolev maps $B \to V_2$ are constants. Consequently, $g \mapsto \tilde{\Phi}(g,k)$ is locally constant for $k \in V_1$.
		
		Next, we show that $\Phi: V_1 \to V_2$ is a group homomorphism. Let $k_1, k_2 \in V_1$ and let $g \in \pi_1^{-1}(U_1)$. Applying the definition of $\tilde{\Phi}$ with the pair $(g, k_1 k_2)$ shows
		\[
		F(g k_1 k_2) = F(g) \Phi(k_1 k_2),
		\]
		whereas applying the definition with the pairs $(g k_1, k_2)$ and $(g, k_1)$ yields
		\[
		F(g k_1 k_2) = F(g k_1) \Phi(k_2) = F(g) \Phi(k_1) \Phi(k_2).
		\]
		Canceling out $F(g)$ implies that $\Phi$ is a group homomorphism.
		
		Finally, we observe that $\Phi(k) = F(g)^{-1}F(gk)$ for any $g \in G_{1}$ and $k \in V_{1}$, so $\Phi$ is a $W^{1,p}$ map. 
	\end{proof}
	
	If we want to study the Pansu differential of a contact lift, the first thing is to study the Lie algebra homomorphism between central extension sequences.
	\begin{lem}\label{lem-Pansu-diff}
		Let $V_1 \to \mathfrak{g}_1 \to \mathfrak{h}_1$ and $V_2 \to \mathfrak{g}_2 \to \mathfrak{h}_2$ be central extensions by $2$-cocycles $\rho_1$ and $\rho_2$, respectively. Let $L: \mathfrak{h}_1 \to \mathfrak{h}_2$ be a Lie algebra homomorphism and let $\varphi: V_1 \to V_2$ be a linear map.
		
		(i) There exists a Lie algebra homomorphism $\psi: \mathfrak{g}_1 \to \mathfrak{g}_2$ such that the diagram
		\begin{displaymath}
			\xymatrix{
				V_1 \ar[r] \ar[d]_\varphi & \mathfrak{g}_1 \ar[d]^{\psi} \ar[r]^{\pi_1} & \mathfrak{h}_1 \ar[d]^{L}
				\\
				V_2 \ar[r] & \mathfrak{g}_2  \ar[r]^{\pi_2} & \mathfrak{h}_2
			}
		\end{displaymath}
		
		commutes if and only if there exists a linear map $\mu: \mathfrak{h}_1 \to V_2$ such that $\varphi \circ \rho_1 - L^* \rho_2 = d_0 \mu$. When such a $\mu$ exists, a homomorphism $\psi$ is given by
		\begin{equation}
			\psi(X + Y) = L(X) + (\mu(X) + \varphi(Y)) \in \mathfrak{h}_2 \oplus V_2 = \mathfrak{g}_2
		\end{equation}
		for $X + Y \in \mathfrak{h}_1 \oplus V_1 = \mathfrak{g}_1$.
		
		(ii) When the central extensions are stratified and $\varphi$ and $L$ are graded, a graded homomorphism $\psi$ is unique up to a graded linear map $\theta: \mathfrak{h}_1 \to V_2$ with $[\mathfrak{h}_1, \mathfrak{h}_1] \subset \ker \theta$. In particular, if $\mathfrak{h}_2^{[1]} = \mathfrak{g}_2^{[1]}$, then the graded homomorphism $\psi$ is unique.
		
	\end{lem}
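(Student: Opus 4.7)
The plan is to treat (i) as an explicit construction and then read (ii) off the parametrization by $\mu$. For the ``if'' direction of (i), I will take a given $\mu$ with $\varphi \circ \rho_1 - L^*\rho_2 = d_0\mu$ and define $\psi$ by the stated formula (4.2). Commutativity of the diagram with both $\iota$'s and $\pi$'s will be immediate from the decompositions $\mathfrak{g}_i = \mathfrak{h}_i \oplus V_i$. The substantive check is bracket preservation. Expanding $[\psi(X_1+Y_1), \psi(X_2+Y_2)]_{\mathfrak{g}_2}$, all terms involving $\mu(X_i) + \varphi(Y_i) \in V_2$ drop out because $V_2$ is central, leaving $[L(X_1), L(X_2)]_{\mathfrak{h}_2} + \rho_2(L(X_1), L(X_2))$. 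On the other side, $\psi([X_1+Y_1, X_2+Y_2]_{\mathfrak{g}_1})$ unfolds to $L([X_1,X_2]_{\mathfrak{h}_1}) + \mu([X_1,X_2]_{\mathfrak{h}_1}) + \varphi(\rho_1(X_1,X_2))$. Using that $L$ is a Lie algebra homomorphism and the identity $d_0\mu(X_1,X_2) = -\mu([X_1,X_2])$ obtained by specializing the paper's definition of $d_0$ to a 1-form, equality of the two sides is precisely the cocycle relation $\varphi \circ \rho_1 - L^*\rho_2 = d_0\mu$.

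For the ``only if'' direction of (i), given any $\psi$ closing the diagram, I will use the splittings $\mathfrak{g}_i = \mathfrak{h}_i \oplus V_i$: $\iota$-compatibility forces $\psi|_{V_1} = \varphi$, and $\pi$-compatibility forces $\pi_{\mathfrak{h}_2} \circ \psi|_{\mathfrak{h}_1} = L$, so setting $\mu := \pi_{V_2} \circ \psi|_{\mathfrak{h}_1}$ recovers the formula (4.2); reversing the bracket computation above then produces the cocycle identity. For (ii), any two graded homomorphisms closing the diagram correspond to maps $\mu_1, \mu_2$ both solving $\varphi \circ \rho_1 - L^*\rho_2 = d_0\mu$, so their difference $\theta$ is a graded $1$-cocycle, which by the sign identity above is exactly the condition $[\mathfrak{h}_1, \mathfrak{h}_1] \subset \ker\theta$. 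For the ``in particular'' statement, $\mathfrak{h}_2^{[1]} = \mathfrak{g}_2^{[1]}$ forces $V_2^{[1]} = 0$, so a graded $\theta \colon \mathfrak{h}_1 \to V_2$ annihilates $\mathfrak{h}_1^{[1]}$; combined with vanishing on $[\mathfrak{h}_1, \mathfrak{h}_1]$ and the stratification identity $\mathfrak{h}_1 = \mathfrak{h}_1^{[1]} + [\mathfrak{h}_1, \mathfrak{h}_1]$, this forces $\theta = 0$.

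The main obstacle is purely bookkeeping: correctly separating the contribution $\rho_2(L(X_1), L(X_2))$ coming from the $\mathfrak{g}_2$-bracket of the horizontally embedded vectors $L(X_i)$---which has no counterpart on the $\mathfrak{g}_1$-side after applying $\psi$---from the purely $\mathfrak{h}_2$-level bracket $[L(X_1), L(X_2)]_{\mathfrak{h}_2}$. The one subtlety worth flagging is the sign of $d_0$ on vector-valued $1$-forms; once this is fixed, everything else is a mechanical unwinding of the direct-sum structures.
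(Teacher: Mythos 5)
Your proposal is correct, and the computation is exactly the standard one: the central-extension bracket kills the $V$-components, so bracket preservation reduces to the $V_2$-component identity $\rho_2(L(X_1),L(X_2)) = \mu([X_1,X_2]) + \varphi(\rho_1(X_1,X_2))$, which with the paper's convention $d_0\mu(X_1,X_2) = -\mu([X_1,X_2])$ is precisely $\varphi\circ\rho_1 - L^*\rho_2 = d_0\mu$; the converse and part (ii) (difference of two solutions is a graded $1$-cocycle, hence vanishes on $[\mathfrak{h}_1,\mathfrak{h}_1]$, and on all of $\mathfrak{h}_1 = \mathfrak{h}_1^{[1]} + [\mathfrak{h}_1,\mathfrak{h}_1]$ when $V_2^{[1]}=0$) are likewise sound. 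Note that the paper itself states this lemma without proof, importing it from the cited work of Hakavuori--Heikkil\"a--Ikonen, so your argument supplies the omitted verification rather than deviating from an argument in the text; it matches the expected proof in both structure and sign conventions.
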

	
	Because $\Phi$ is Lie group homomorphism and $V_1$,$V_2$ are central, so $\Phi$ is a linear map.
	\begin{equation*}
		\Phi(2k)=\Phi(k+k)=\Phi(k)+\Phi(k)=2\Phi(k)
	\end{equation*}
	Let $g \in \pi_{1}^{-1}(U_{1})$. Computing the Pansu derivative in a direction $h \in V_{1}$, we obtain
	\[
	\begin{aligned}
		d_{P} F(g)h &= \lim_{\lambda \rightarrow 0^{+}} \delta_{1 / \lambda}\left(F(g)^{-1} F\left(g \delta_{\lambda} h\right)\right) \\
		&= \lim_{\lambda \rightarrow 0^{+}} \delta_{1 / \lambda}\left(F(g)^{-1} F(g) \Phi\left(\delta_{\lambda} h\right)\right) \\
		&= \lim_{\lambda \rightarrow 0^{+}} \delta_{1 / \lambda}\left(\Phi\left(\delta_{\lambda} h\right)\right) =: \varphi h
	\end{aligned}
	\]
	
	Since the projections $\pi_{1}$ and $\pi_{2}$ are homomorphisms, we have $\pi_{2} \circ d_{P} F(g) = d_{P} f\left(\pi_{1} g\right) \circ \pi_{1}$. This gives us the commutative diagram:
	\begin{displaymath}
		\xymatrix{
			V_1 \ar[r] \ar[d]_\varphi & G_1 \ar[d]^{d_P F(g)} \ar[r]^{\pi_1} & H_1 \ar[d]^{d_P f(\pi_1g)}
			\\
			V_2 \ar[r] & G_2  \ar[r]^{\pi_2} & H_2
		}
	\end{displaymath}
	By Lemma \ref{lem-Pansu-diff}, such a diagram implies that the corresponding Lie algebra homomorphisms satisfy
	\begin{equation*}
		\varphi \circ \rho_{1} - d_{P} f(h)^{*} \rho_{2} = d_{0} \mu_{h}
	\end{equation*}
	for some linear map $\mu_{h} \colon \mathfrak{h}_{1} \to V_{2}$. Thus, $\mu_{h}=\varphi \circ \alpha_{1} - d_{P} f(h)^{*} \alpha_{2}$ and 
	\begin{equation}\label{Pansu-diff}
		D_P F=
		\begin{bmatrix}
			D_P f & 0 \\
			\varphi \circ \alpha_{1} - d_{P} f(h)^{*} \alpha_{2} & \varphi
		\end{bmatrix}
	\end{equation}
	With the result above, we know the Pansu differential for the lift of a quasiconformal contact map and get the lift is also a quasiconformal contact map when $V_2$ only has trivial horizontal part.
	
	\begin{lem}\label{lem-quasi}
		Let $U_1 \subset \mathbb{H}_1$ and $\tilde{U}_1 \subset \pi^{-1}(U_1) \subset \mathbb{G}_1$ be domains. When $rank(\mathbb{G}_2)=rank(\mathbb{H}_2)$, a quasiconformal contact map $f: U_1 \to \mathbb{H}_2$ admits a quasiconformal contact lift $F: \tilde{U}_1 \to \mathbb{G}_2$ if and only if this quasiconformal contact map $f: U_1 \to \mathbb{H}_2$ admits a sobolev contact lift $F: \tilde{U}_1 \to \mathbb{G}_2$.
	\end{lem}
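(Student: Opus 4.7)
The plan is to prove the two implications separately, with the forward direction being essentially immediate and the reverse requiring careful use of the Pansu differential decomposition.

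For the forward direction (quasiconformal lift $\Rightarrow$ Sobolev lift), I would invoke the standard higher integrability of the Pansu derivative: a $K$-quasiconformal map between Carnot groups of homogeneous dimension $Q$ lies in $W^{1,p}_{\mathrm{loc}}$ for some $p > Q$. Thus a quasiconformal contact lift automatically satisfies the hypotheses of Lemma 4.1, giving a Sobolev contact lift.

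For the converse, suppose $f$ is $K$-quasiconformal and $F$ is a Sobolev contact lift. The goal is to show that $F$ itself is quasiconformal. First I would apply Lemma 4.2 to obtain the decomposition $F(gk) = F(g)\Phi(k)$ with $\Phi \colon V_1 \to V_2$ a Lie group homomorphism, then use the Pansu derivative computation leading to (4.3) to write
\[
D_P F(g) = \begin{bmatrix} D_P f(\pi_1 g) & 0 \\ \varphi \circ \alpha_1 - (D_P f)^{*}\alpha_2 & \varphi \end{bmatrix}
\]
at a.e.\ point, with blocks taken with respect to $\mathfrak{g}_i = \mathfrak{h}_i \oplus V_i$. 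The key structural fact is that $\mathrm{rank}(G_2) = \mathrm{rank}(H_2)$ forces $V_2$ to lie in layers of weight $\geq 2$, so $\varphi$ contributes only in higher-weight components and the horizontal-to-horizontal action of $D_P F$ coincides with $D_P f$, which carries bounded distortion $\leq K$ by the quasiconformality of $f$. The off-diagonal block $\mu := \varphi \circ \alpha_1 - (D_P f)^{*}\alpha_2$ is locally bounded on compact subdomains in terms of $\|\alpha_1\|_{\infty}$, $\|\alpha_2\|_{\infty}$, and $|D_P f|$, and it feeds into strictly higher-weight components of $\mathfrak{g}_2$.

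The main obstacle is converting these block estimates into a uniform distortion inequality $\|D_P F\|^{Q_1} \leq K' J_F$. The plan is to exploit the block-triangular form to factor the Pansu Jacobian as $J_F = J_f \cdot |\det \varphi|$ in the weighted sense, where the factor $|\det \varphi|$ is a fixed positive constant: non-vanishing follows from the structural identity $\varphi \circ \rho_1 - (D_P f)^{*}\rho_2 = d_0 \mu$ combined with the non-degeneracy of $\rho_2$ and the a.e.\ full-rank property of $D_P f$. The delicate point is showing the off-diagonal $\mu$ does not inflate the maximal-to-minimal stretch ratio of $D_P F$; because $\mu$ lands in layers of weight $\geq 2$, its contribution rescales by Pansu-dilation factors $\lambda^{-k}$ with $k \geq 2$ and is therefore subordinate to the horizontal block under the graded operator norm. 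Putting these bounds together gives $\|D_P F\|^{Q_1} \leq K' J_F$ with $K'$ depending on $K$ and on local bounds of $\alpha_1, \alpha_2, \varphi$, which establishes that the Sobolev lift $F$ is in fact quasiconformal.
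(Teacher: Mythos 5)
Your overall architecture coincides with the paper's proof up to the point where the paper stops: the forward direction is just the observation that a quasiconformal map is itself a Sobolev map (the paper quotes $W^{1,Q_1}$; your appeal to higher integrability $p>Q_1$ is in fact the more appropriate statement, since the Sobolev lift notion of Lemma 4.1 asks for $p>Q_1$, and note you do not need to re-run Lemma 4.1 at all -- the quasiconformal lift \emph{is} the Sobolev lift), and the converse runs through the fiber decomposition $F(gk)=F(g)\Phi(k)$ of Lemma 4.2 and the block form (4.3) of $D_PF$, with $\mathrm{rank}(\mathbb{G}_2)=\mathrm{rank}(\mathbb{H}_2)$ forcing $V_2$ into weights $\geq 2$, so that the horizontal block of $D_PF$ is exactly that of $D_Pf$. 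The paper ends there, taking the coincidence of horizontal blocks as the quasiconformality of the lift.

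The additional quantitative step you add is where there is a genuine gap. First, the exponent is wrong: for $F$ defined on $\tilde U_1\subset\mathbb{G}_1$ the analytic distortion inequality must read $\|D_HF\|^{Q_{G_1}}\leq K'J_F$ with $Q_{G_1}=Q_1+\sum_k k\,\dim V_1^{[k]}>Q_1$, not $\|D_PF\|^{Q_1}\leq K'J_F$. With the correct exponent the problem becomes visible: $\|D_HF\|^{Q_{G_1}}=\|D_Hf\|^{Q_1}\,\|D_Hf\|^{Q_{G_1}-Q_1}\leq K\,J_f\,\|D_Hf\|^{Q_{G_1}-Q_1}$, while the block-triangular factorization gives $J_F=J_f\,|\det\varphi|$ with $\varphi$ constant, so what you need is an a.e.\ bound $\|D_Hf\|^{Q_{G_1}-Q_1}\lesssim|\det\varphi|$. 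This does not follow from $K$-quasiconformality of $f$ together with local bounds on $\alpha_1,\alpha_2$ and nondegeneracy of $\varphi$: a quasiconformal map has $\|D_Hf\|$ only in $L^p_{\mathrm{loc}}$, not in $L^\infty_{\mathrm{loc}}$, so the constant $K'$ you announce cannot depend only on the data you list. The bound is true in this constrained setting, but only because the identity $\varphi\circ\rho_1-f_P^*\rho_2=d_0\mu$ pins $\|D_Hf\|$ to a constant scale determined by $\varphi$ -- which is precisely the rigidity computation of Theorem 5.1, proved later \emph{using} this lemma, so invoking it here would need care to avoid circularity. Relatedly, your concern about the off-diagonal block $\mu$ is a side issue (block-triangularity already makes $J_F$ independent of $\mu$, and $\mu$ only feeds strictly higher weights); the real difficulty is the diagonal comparison between the constant $\varphi$ on the weight-$\geq 2$ directions of $V_1$ and the spatially varying $D_Hf$, which the remark that $\mu$ is ``subordinate under the graded operator norm'' does not address.
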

	\begin{proof}
		We know that a quasiconformal contact map is a $W^{1,Q_1}$ map. Thus, we just need to prove the restriction of $D_P F$ to the horizontal layer is related to a quasiconformal map between Euclidean Spaces. Because of \eqref{Pansu-diff}, the restriction of $D_P F$ to the horizontal layer only depends on the restriction of $D_P f$ to the horizontal layer. So, the sobolev contact lift $F$ is also quasiconformal contact lift.
	\end{proof}
	\begin{thm}
		Suppose that $f_{P}^{*}\rho_{2} = \varphi \circ \rho_{1} + d_{0}\omega$ for a graded linear map $\varphi \colon V_{1} \to V_{2}$ and a 1-form $\omega \in \Omega^{1}(U_{1}; V_{2})$. If the weight of $\rho_{2}$ is equal to or greater than the maximal weight of non-trivial 2-cocycles in the Rumin complex $E_{0}$ of $\mathbb{H}_{1}$ and $U_{1}$ is simply connected, then $f$ admits a smooth contact lift $F \colon \mathbb{G}_{1} \supset \pi_{1}^{-1}(U_{1}) \to \mathbb{G}_{2}$.
	\end{thm}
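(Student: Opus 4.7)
The plan is to adapt the strategy of Lemma 5.3 in \cite{2025arXiv250814647H}: construct $F$ explicitly by integration along horizontal curves and reduce the well-definedness of the lift to a cohomological identity that the hypothesis is designed to produce. Fix a basepoint $h_{0}\in U_{1}$ together with lifts $g_{0}\in\pi_{1}^{-1}(h_{0})$ and $p_{0}\in\pi_{2}^{-1}(f(h_{0}))$. Applying Lemma 3.1 to the central extension $V_{2}\to G_{2}\to H_{2}$, I would define
\[
F(\tilde\gamma(1)) := p_{0}\cdot f(h_{0})^{-1}f(\gamma(1))\cdot\exp\!\Bigl(\int_{\gamma}f^{*}\alpha_{2}\Bigr)
\]
for any horizontal curve $\tilde\gamma\colon[0,1]\to\pi_{1}^{-1}(U_{1})$ with $\tilde\gamma(0)=g_{0}$ and $\gamma=\pi_{1}\circ\tilde\gamma$, where $\alpha_{2}\in\Omega^{1}(H_{2};V_{2})$ satisfies $d\alpha_{2}=\rho_{2}$.

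Well-definedness reduces, exactly as in the proof of Lemma 3.2, to showing that $\int_{\gamma}f^{*}\alpha_{2}=0$ for every closed horizontal loop $\gamma$ in $U_{1}$ whose $G_{1}$-horizontal lift is closed, i.e.\ satisfies $\int_{\gamma}\alpha_{1}=0$. Using $d\alpha_{i}=\rho_{i}$ together with the hypothesis $f_{P}^{*}\rho_{2}=\varphi\circ\rho_{1}+d_{0}\omega$, I would form the $V_{2}$-valued 1-form
\[
\beta := f^{*}\alpha_{2}-\varphi\circ\alpha_{1}-\omega
\]
and observe that it is Rumin-closed on $U_{1}$. Since $U_{1}$ is simply connected, any closed 1-form is exact, so $\int_{\gamma}\beta=0$ on every closed loop, yielding
\[
\int_{\gamma}f^{*}\alpha_{2} = \varphi\Bigl(\int_{\gamma}\alpha_{1}\Bigr) + \int_{\gamma}\omega = \int_{\gamma}\omega
\]
on the loops of interest.

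The main obstacle is showing $\int_{\gamma}\omega=0$ for every such $\gamma$, and this is exactly what the Rumin-complex weight hypothesis is designed to guarantee. Because the weight of $\rho_{2}$ is at least the maximal weight of any non-trivial 2-cocycle in $E_{0}$ on $\mathbb{H}_{1}$, Rumin's algebraic primitive of $f_{P}^{*}\rho_{2}-\varphi\circ\rho_{1}$ can be chosen with weight strictly greater than $1$, so that $\omega$ annihilates horizontal tangent vectors and integrates to zero along every horizontal curve. Unpacking this step requires the layered decomposition of $E_{0}$ on $\mathbb{H}_{1}$ and closely mirrors the construction in \cite{2025arXiv250814647H}; this is where the bulk of the technical work lies and where the weight hypothesis is unavoidable.

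Once well-definedness is secured, smoothness of $F$ is immediate from the integral formula together with the smoothness of $f$, $\alpha_{1}$, and $\alpha_{2}$. The contact property follows by differentiating the defining formula along horizontal curves: the horizontal component of $d_{P}F$ coincides with $d_{P}f\circ d\pi_{1}$ by construction, while the vertical component is precisely the $V_{2}$-valued 1-form recovered from $\alpha_{2}$ via Lemma 3.1, so $d_{P}F$ fits into the commutative diagram of central extensions predicted by Lemma 4.3 with $\mu$ supplied by $\omega$.
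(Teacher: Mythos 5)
There is a genuine gap, and it sits at the step you dismiss as an observation. Note first that the paper states this theorem without its own proof (it is the smooth-lifting criterion imported from \cite{2025arXiv250814647H}), so the comparison can only be with the strategy of that reference and of Lemma 3.2; your skeleton — define $F$ by the path integral $f(\gamma(1))\cdot\exp(\int_\gamma f^*\alpha_2)$ and reduce well-definedness to the vanishing of $\int_\gamma f^*\alpha_2$ over horizontal loops whose $G_1$-lifts close up — is indeed the right skeleton. The problem is your claim that $\beta := f^*\alpha_2-\varphi\circ\alpha_1-\omega$ is closed. The hypothesis is the \emph{algebraic} identity $f_P^*\rho_2=\varphi\circ\rho_1+d_0\omega$, involving the Pansu pullback and the weight-preserving Lie-algebraic differential $d_0$, whereas closedness of $\beta$ requires the \emph{de Rham} identity $f^*\rho_2=\varphi\circ\rho_1+d\omega$. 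What one actually gets is
\begin{equation*}
d\beta=(f^*\rho_2-f_P^*\rho_2)-(d-d_0)\omega,
\end{equation*}
which does not vanish; it only consists of terms of weight strictly larger than the weight of $\rho_2$ (since for a smooth contact map the pullback does not decrease weight and its lowest-weight part is the Pansu pullback, and $d-d_0$ strictly raises weight). Showing that these higher-weight discrepancies do not contribute to the periods $\int_\gamma f^*\alpha_2$ is precisely where the hypothesis that $\mathrm{wt}(\rho_2)$ is at least the maximal weight of non-trivial Rumin $2$-cocycles in $E_0$ must be spent: above that threshold closed $2$-forms have no Rumin representative, so the discrepancy can be absorbed into an exact term with a primitive of weight $\geq 2$, which then integrates to zero along horizontal loops. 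None of this is carried out in your proposal.

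Instead, you spend the weight hypothesis on choosing the primitive $\omega$ with weight $\geq 2$ so that it annihilates horizontal vectors. But that reduction is automatic and costs nothing: $d_0$ is $C^\infty$-linear and weight-preserving, and it annihilates every weight-one $1$-form (brackets of left-invariant fields land in layers $\geq 2$), so only the component of $\omega$ in the weight of $\rho_2$ ever contributes to $d_0\omega$ and one may always discard the weight-one part. So the step you identify as "where the bulk of the technical work lies" is trivial, while the genuinely hard step — converting the pointwise algebraic cocycle identity into an exactness statement for honest differential forms, which is the content of the Rumin-complex argument in \cite{2025arXiv250814647H} — is asserted rather than proved. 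Until $d\beta$ (or at least its periods over the relevant loops) is controlled, the well-definedness of $F$, and hence the whole construction, does not follow from what you have written.
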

	
	\noindent\textbf{Example\ 4.6.} For Heisenberg group $H_1$
	\begin{displaymath}
		\xymatrix{
			R \ar[r] & H_1 \ar[d]^{F} \ar[r]^{\pi} & \mathbb{R}^2 \ar[d]^{f}
			\\
			R \ar[r] & H_1  \ar[r]^{\pi} & \mathbb{R}^2
		}
	\end{displaymath}
	We know that for Heisenberg group $H_1$ if $\varphi \circ \alpha_{1} - d_{P} f(h)^{*} \alpha_{2}=0$ then there is a contact lift $F$ of $f$. That is $\varphi=det(Df)$ is a constant. For instance, $f(a,b)=(a+b^2,b)$ admits a smooth contact lift.
	\begin{equation*}
		Df=
		\begin{bmatrix}
			1 & 2b \\
			0 & 1
		\end{bmatrix}
	\end{equation*}
	
	\section{Rigidity of contact lift}
	In this chapter we find that all of the quasiconformal contact maps which admit  contact lifts are bi-Lipschitz by the rigidity of the lift. Especially, for Carnot groups with rigid structure, the possible maps are very few.
	\begin{thm}\label{thm-bi-Lips}
		Suppose that $f$ is a $K$-quasiconformal contact map and it admits a contact lift $F$ then $f$ is bi-Lipschitz.
	\end{thm}
	\begin{proof}
		\begin{displaymath}
			\xymatrix{
				V_1 \ar[r] \ar[d]_\varphi & G_1 \ar[d]^{F} \ar[r]^{\pi} & H_1 \ar[d]^{f}
				\\
				V_1 \ar[r] & G_1  \ar[r]^{\pi} & H_1
			}
		\end{displaymath}
		
		First of all, since Lemma \ref{lem-quasi} $F$ is a quasiconformal map so that the dimensions of image of $F$ must be the same as $G_1$ so as to $\varphi$ is non-degenerate.
		
		Let's begin with the easy condition when the central extension is for the highest layer, so for Case 1,2,3 we can directly compute by the restriction of $\varphi$. 
		
		Case 1: $V_1=\mathbb{R}$ and $\mathfrak{h}_1=\mathbb{R}^k$, so the central extension $G_1$ is a step 2 Carnot group. we know $V_1=span\left\{\alpha\right\}$ and
		\begin{equation*}
			\alpha=\sum_{i<j}a_{ij}dx_i\wedge dx_j
		\end{equation*}
		After using the following method below,
		\begin{equation*}
			\begin{aligned}
				&dx_1\wedge dx_2+dx_1\wedge dx_3+dx_2\wedge dx_3\\
				&=dx_1\wedge(dx_2+dx_3)+ dx_2\wedge(dx_2+dx_3)\\
				&=(dx_1+dx_2)\wedge(dx_2+dx_3)
			\end{aligned}
		\end{equation*}
		we can find a new basis of $\alpha$ like $\left\{dx_1+dx_2,dx_2+dx_3\right\}$ and note that as $\left\{d\widetilde{x}_1,d\widetilde{y}_2\right\}$.
		Thus, the $\alpha$ can be written as the form
		\begin{equation*}
			\alpha=\sum_{i}b_i d\widetilde{x}_i\wedge d\widetilde{y}_i.
		\end{equation*}
		Because $V_1$ is the second layer, $\varphi\circ \alpha=\lambda \alpha$ and ${f_P}^*\alpha=\lambda \alpha$.
		\begin{equation*}
			\sum_{i}b_i {f_P}^*(d\widetilde{x}_i\wedge d\widetilde{y}_i)=\sum_{i}b_i \lambda(d\widetilde{x}_i\wedge d\widetilde{y}_i)
		\end{equation*}
		Restricting the Pansu differential to  horizontal layer, we can get $\lVert D_Hf\rVert\approx |\lambda|^\frac{1}{2}$.
		
		From the assumption of $K$-$QC$ map $f$, we know $f$ is bi-Lipschitz, since
		\begin{equation*}
			|\lambda|^\frac{1}{2} K^{-1}\lVert X\rVert \leq |D_Hf(X)|\leq |\lambda|^\frac{1}{2} K \lVert X\rVert 
		\end{equation*}
		for horizontal vector $X$.
		
		Case 2: If $V_1={V_1}^{[p+1]}=span\left\{\alpha\right\}$  and $\mathfrak{h}_1={\mathfrak{h}_1}^{[1]}\oplus \dots \oplus  {\mathfrak{h}_1}^{[p]}$ ,for $p\geq 2$. 
		
		Due to $d_0\rho=0$, 
		\begin{equation*}
			\rho([X_1,X_2],X_3)+\rho([X_2,X_3],X_1)+\rho([X_3,X_1],X_2)=0
		\end{equation*}
		\begin{equation*}
			\rho({\mathfrak{h}_1}^{[i]},{\mathfrak{h}_1}^{[j]})\subset \rho({\mathfrak{h}_1}^{[i-1]},{\mathfrak{h}_1}^{[j+1]}) \subset\dots \subset\rho({\mathfrak{h}_1}^{[1]},{\mathfrak{h}_1}^{[i+j-1]})
		\end{equation*}
		$\alpha$ has the form 
		\begin{equation*}
			\alpha=\sum_{i}b_id\widetilde{x}_i\wedge d\widetilde{y}_i
		\end{equation*}
		where $d\widetilde{x}_i\in ({H_1}^{[1]})^*$ and $d\widetilde{y}_i\in ({H_1}^{[p]})^*$ with the weight $wt(d\widetilde{x}_i)=-1$ and $wt(d\widetilde{y}_i)=-p$.
		
		Because $V_1$ is the highest layer, $\varphi\circ \alpha=\lambda \alpha$ and ${f_P}^*\alpha=\lambda \alpha$ so that we can get $\lVert D_Hf\rVert\approx |\lambda|^\frac{1}{p+1}$.
		
		Case 3:If $V_1={V_1}^{[p+1]}=span\left\{\alpha_1\dots \alpha_j\right\}$  and $\mathfrak{h}_1={\mathfrak{h}_1}^{[1]}\oplus \dots \oplus  {\mathfrak{h}_1}^{[p]}$ for $p\geq 2$. 
		
		So the same as Case 2, we can get  ${f_P}^*\alpha=\varphi\circ \alpha$.
		When $D_Pf$ is non-degenerate on ${\mathfrak{h}_1}^{[1]}$, $f_{P*}({\mathfrak{h}_1}^{[1]})={\mathfrak{h}_1}^{[1]}$. Using all the vectors on ${\mathfrak{h}_1}^{[1]}$, they generate the whole Lie algebra $\mathfrak{g}_1$, so  $f_{P*}({\mathfrak{g}_1}^{[1]})={\mathfrak{g}_1}^{[1]}$.
		
		If $\Vert f_{P*}(X)\Vert \approx r\Vert X\Vert$ for horizontal vector $X$, 
		\begin{equation*}
			\begin{aligned}
				\Vert f_{P*}([X_1,X_2])\Vert &=\Vert [f_{P*}(X_1),f_{P*}(X_2)]\Vert\\
				&\leq 2\Vert f_{P*}(X_1)\Vert \cdot \Vert f_{P*}(X_2)\Vert \\
				&\lesssim2 r^2\Vert X_1\Vert \cdot\Vert X_2\Vert
			\end{aligned}
		\end{equation*}
		where $X_1$ and $X_2$ are horizontal vectors. And also because $f_{P*}$ is non-degenerate, there is a lower bound of $f_{P*}([\cdot,\cdot])$. 
		combined with $D_Pf$ commuting with dilation $\delta_r$, therefore, if the horizontal layer is expanded by $r$ times, then the k-th layer is expanded by $r^k$ times. If $\Vert \varphi \Vert \approx \lambda$, then  $\lVert D_Hf\rVert\approx |\lambda|^\frac{1}{p+1}$.
		
		Case 4: the central extension is not to add the highest layer, so the structure of $D_PF$ of the lifting $F$ is not good enough. We need to estimate the determinant of $ D_Hf$ in a more general way which depends on the existence of the quasiconformal contact lift.
		
		Because of Lemma \ref{lem-quasi}, $f$ admits a quasiconformal contact lift $F$ and
		\begin{equation*}
			D_PF|_{{\mathfrak{g}_1}^{[1]}}=\varphi|_{{\mathfrak{g}_1}^{[1]}}\subset {\mathfrak{g}_1}^{[1]}.
		\end{equation*}
		The homogeneous dimension of ${\mathfrak{g}_1}^{[1]}$ is $Q'$. If $det(\varphi)=\lambda$, then $\Vert D_Hf \Vert\approx \lambda^\frac{1}{Q'}$.
		
	\end{proof}
	
	The result is not finished here, for some Carnot groups with rigid structure, we can even draw a better conclusion. For example, if $F$ is a contact lift on quaternionic Heisenberg group $\mathbb{H}_\mathbb{H}^n$ of a map $f$ on $\mathbb{R}^{4n}$, then $f:\mathbb{R}^{4n}\rightarrow\mathbb{R}^{4n}$ is a affine transformation.
	\begin{thm}\label{thm-R4n}
		Suppose $f$ is a Sobolev continuous map on $\mathbb{R}^{4n}$ which admits a contact lift $F$ on $\mathbb{H}_\mathbb{H}^n$ and $D_PF$ is full-rank for every point, then $f$ is an affine transformation.
	\end{thm}
	\begin{proof}
		By assumption, $F$ is a contact lift and its Pansu differential is 
		\begin{equation*}
			D_PF=
			\begin{bmatrix}
				Df\\
				&\varphi
			\end{bmatrix}
		\end{equation*}
		for some constant matrix $\varphi$. 
		\begin{equation*}
			\varphi=
			\begin{bmatrix}
				a_{11}& a_{12} &a_{13}\\
				a_{21}& a_{22} &a_{23}\\
				a_{31}& a_{32} &a_{33}
			\end{bmatrix}
		\end{equation*}
		
		We define central extension as $\rho = (\rho_{1},\rho_{2},\rho_3)$,
		\begin{equation*}
			\begin{aligned}
				&\rho_1=\sum_{i=1}^{n} dx_i\wedge dy_i+dz_i \wedge dw_i\\
				&\rho_2=\sum_{i=1}^{n} dx_i\wedge dz_i+dw_i \wedge dy_i\\
				&\rho_3=\sum_{i=1}^{n} dx_i\wedge dw_i+dy_i \wedge dz_i.
			\end{aligned}
		\end{equation*}  
		
		For any fixed point $q$, $D_PF(q)$ is a group homomorphism , because $D_PF$ is invertible for every point, we can define a map $\widetilde{F}=F\circ {D_PF(q)}^{-1}$ with Pansu differential 
		\begin{equation}
			D_P\widetilde{F}=
			\begin{bmatrix}
				Df\circ (Df(q))^{-1}\\
				&id
			\end{bmatrix}
		\end{equation} 
		Obviously, $\widetilde{F}$ is also a contact lift, since $\widetilde{f}^*(\alpha)=id \circ \alpha$ for constant map $id$, where $\widetilde{f}=f\circ (Df(q))^{-1}$. 
		Thus, $\widetilde{F}_P^*(\rho)=\widetilde{f}^*(\rho)=\rho$, which means $\widetilde{f}$ is a symplectic forms preserving map. This would greatly limit the possible choice of $f$.
		
		The symplectic forms $\rho_1,\rho_2,\rho_3$ can be represent as the matrix $J_1,J_2,J_3$ in local coordinates.
		\begin{equation*}
			J_1=\begin{bmatrix}  
				& I_n &  & \\
				-I_n&  &  & \\
				&  &  &I_n \\
				&  &  -I_n&
			\end{bmatrix}
		\end{equation*} 
		\begin{equation*}
			J_2=\begin{bmatrix}  
				&  &  I_n& \\
				&  &  &I_n \\
				-I_n&  &  & \\
				&  -I_n&  &
			\end{bmatrix}
		\end{equation*}
		\begin{equation*}
			J_3=\begin{bmatrix}  
				&  &  &-I_n \\
				&  &-I_n  & \\
				&  I_n&  & \\
				I_n&  &  &
			\end{bmatrix}
		\end{equation*}
		
		where satisfying $J_3=J_1J_2$. We denote $D\widetilde{f}:=A$. Due to map $\widetilde{f}$ preserving symplectic forms, we have
		\begin{equation*}
			A^TJ_iA=J_i \ \ for \ i=1,2,3.
		\end{equation*}
		and 
		\begin{equation*}
			\begin{aligned}
				A^TJ_3A&=J_3\\
				A^T(J_1J_2)A&=J_1J_2\\
				A^T(J_1J_2)A&=A^TJ_1AA^TJ_2A\\
				J_1J_2&=J_1AA^TJ_2\\
				I&=AA^T,
			\end{aligned}
		\end{equation*}
		since $A$ is invertible. Thus, $A$ is orthogonal.
		\begin{equation*}
			\begin{aligned}
				A^TJ_iA&=J_i\\
				J_iA&=J_iA  \ \ for \ i=1,2,3,
			\end{aligned}
		\end{equation*}
		which means $J_i\widetilde{f}_*=\widetilde{f}_*J_i$ and $f^*(\rho_i)=\rho_i$ for $i=1,2,3$.
		Therefore, related to metric $g$ on $\mathbb{R}^{4n}$,
		\begin{equation*}
			\begin{aligned}
				g(\widetilde{f}_*(X),\widetilde{f}_*(Y))
				&=\rho_i(\widetilde{f}_*(X),J_i\widetilde{f}_*(Y))\\
				&=\rho_i(\widetilde{f}_*(X),\widetilde{f}_*(J_iY))\\
				&=(\widetilde{f}^*\rho_i)(X,J_iY)\\
				&=\rho_i(X,J_iY)\\
				&=g(X,Y).
			\end{aligned}
		\end{equation*} 
		$\widetilde{f}$ is isometric on $\mathbb{R}^{4n}$. So, the only possible is $\widetilde{f}$ is an affine transformation as well as $f=\widetilde{f}\circ Df(q)$.
		
	\end{proof}
	
	\begin{remark}
		If $f$ is a quasiconformal map on $\mathbb{R}^{4n}$ with a contact lift on $\mathbb{H}_\mathbb{H}^n$, then because of theorem \ref{thm-bi-Lips} it's bi-Lipschitz and $D_PF$ is full-rank. Therefore, this kind of $f$ would naturally satisfy the condition of theorem \ref{thm-R4n}.
	\end{remark}
	
	\begin{remark}
		For quaternionic Heisenberg group $\mathbb{H}_\mathbb{H}^n$, its Lie group structure has more delicate symmetrical structure. To be specific, continue to use the notations from the theorem \ref{thm-R4n},
		$\rho_1^{2n}=\rho_2^{2n}=\rho_3^{2n}=4^ndx_1\wedge\dots \wedge dx_n\wedge dy_1\wedge\dots \wedge dy_n\wedge dz_1\wedge\dots \wedge dz_n\wedge dw_1\wedge\dots \wedge dw_n$.
		From
		\begin{equation*}
			\widetilde{f}^*(\rho_1^{2n})=\widetilde{f}^*(\rho_2^{2n})=\widetilde{f}^*(\rho_3^{2n}),
		\end{equation*}
		we have 
		\begin{equation*}
			\widetilde{f}^*(\rho_1)^{2n}=\widetilde{f}^*(\rho_2)^{2n}=\widetilde{f}^*(\rho_3)^{2n}
		\end{equation*}
		so that  
		\begin{equation*}
			a_{11}^{2n}+a_{12}^{2n}+a_{13}^{2n}=a_{21}^{2n}+a_{22}^{2n}+a_{23}^{2n}=a_{31}^{2n}+a_{32}^{2n}+a_{33}^{2n}.
		\end{equation*}
	\end{remark}
	
	However, if the structure of $V$ in central extension is not rich enough, the contact lifts on the Carnot group are abundant.
	
	\noindent\textbf{Example\ 4.5.}: Consider the real filiform Carnot group $F^{3}$ of step $3$. The corresponding Lie algebra is given by
	\[
	\mathfrak{f}^{s} = \operatorname{span}(X_1, X_2) \oplus \operatorname{span}(X_{3}) \oplus \operatorname{span}(X_{4}),
	\]
	where $[X_1, X_2] = X_{3}$ and $[X_1, X_{3}] = X_{4}$ are the only non-trivial relations. Consider the corresponding dual basis $\omega_1, \omega_2,\omega_3, \omega_4$.
	
	\begin{equation*}
		\begin{aligned}
			&X_1=\frac{\partial}{\partial x_1}\\
			&X_2=\frac{\partial}{\partial x_2}+x_1\frac{\partial}{\partial x_3}+\frac{x_1^2}{2}\frac{\partial}{\partial x_4}\\
			&X_3=\frac{\partial}{\partial x_3}+x_1\frac{\partial}{\partial x_4}\\
			&X_4=\frac{\partial}{\partial x_4}
		\end{aligned}
	\end{equation*}
	and
	\begin{equation*}
		\begin{aligned}
			&\omega_1=dx_1\\
			&\omega_2=dx_2\\
			&\omega_3=-x_1dx_2+dx_3\\
			&\omega_4=\frac{x_1^2}{2}dx_2-x_1dx_3+dx_4
		\end{aligned}
	\end{equation*}
	
	(i) The easy way to find contact lifts is using central extension twice. The map $f:\mathbb{R}^2\rightarrow \mathbb{R}^2$ admits a contact lift $F$ on Heisenberg group $\mathbb{H}^1$ if and only if $f$ is a area-preserving map. If we want to lift $F$ onto $F^3$ as $G:F^3\rightarrow F^3$, its Pansu differential are supposed to be such form:
	\begin{equation*}
		D_PG=\begin{bmatrix}  
			a&  b \\
			0& \frac{1}{a^2} \\
			&  &\frac{1}{a}   \\
			&  &  &1
		\end{bmatrix}
	\end{equation*}
	where $a$ is any constant and for convenience we take $\varphi=id:span(X_4)\rightarrow span(X_4)$. Therefore, because $b$ can be any continuous function, we can find plenty of maps $f$ and $F$ satisfying the condition.     
	
	(ii)The second way is more complicate. By applying the same method in \cite{Koranyi1985} as Kor´anyi–Reimann theorem, we can use vector field to generate a one-parameter family of contact lift related to central extension $\mathbb{R}\rightarrow F^3\rightarrow F^2$.
	
	For vector field $V=p_1X_1+p_2X_2+p_3X_3+p_4X_4$, as a generator of contact lift, it need to satisfy $\mathcal{L}_V\omega_4=0$ for a group homomorphism as initial condition. As the motion along $V$, every $f_s$ would preserve $\omega_4$, which means it's a contact lift. 
	\begin{equation*}
		\begin{aligned}
			\mathcal{L}_V\omega_4&=d(\iota_V\omega_4)+\iota_V(d\omega_4)\\
			&=dp_4+p_3dx_1+x_1p_1dx_2-p_1dx_3\\
			&=(\frac{\partial}{\partial x_1}p_4+p_3)dx_1+(\frac{\partial}{\partial x_2}p_4+x_1p_1)dx_2+(\frac{\partial}{\partial x_3}p_4-p_1)dx_3+\frac{\partial}{\partial x_4}p_4dx_1
		\end{aligned}
	\end{equation*}
	Therefore, the vector field satisfies 
	\begin{equation}
		\left\{\begin{aligned}
			p_1&=\frac{\partial}{\partial x_3}p_4\\
			x_1p_1&=\frac{\partial}{\partial x_2}p_4\\
			p_3&=-\frac{\partial}{\partial x_1}p_4\\
			0&=\frac{\partial}{\partial x_4}p_4
		\end{aligned}
		\right.
	\end{equation}
	and this kind of vector fields are abundant.

	\section{The approximation of contact lift on Heisenberg group}
	\subsection{Sobelev contact lift}
	The previous sections established criteria for the existence and rigidity of contact lifts for various regularity classes. A natural subsequent question is whether these (possibly rough) lifts can be approximated by smooth lifts, which are often easier to handle computationally and theoretically. This is particularly relevant for Sobolev and H\"older maps, where the lift $F$ may lack classical differentiability. In this section, we demonstrate that for the classical Heisenberg group $H^n$, any Sobolev or H\"older lift can be approximated by a family of smooth lifts $G_\varepsilon$, utilizing tools from symplectic geometry and mollification.
	
	We can construct a family of new contact lifts in two ways, the first is to Constructe through mapped composition.
	
	If $f_1,f_2$ admit contact lift $F_1,F_2$, then $F_2 \circ F_1$ is the contact lift of $f_2\circ f_1$, because we have the commutative diagram:
	\begin{displaymath}
		\xymatrix{
			V_1 \ar[r] \ar[d]_\Phi_{1} & G_1 \ar[d]^{ F_1} \ar[r]^{\pi_1} & H_1 \ar[d]^{ f_1}
			\\
			V_2 \ar[r] \ar[d]_\Phi_{2} & G_2 \ar[d]^{ F_2} \ar[r]^{\pi_2} & H_2 \ar[d]^{ f_2}
			\\
			V_3 \ar[r] & G_3  \ar[r]^{\pi_3} & H_3
		}
	\end{displaymath}
	and $F_2\circ F_1(gk)=F_2(F_1(g)\Phi_{1}(k))=F_2\circ F_1(g)\cdot \Phi_{2}\circ \Phi_{1}(k)$. It still holds under a limited number of composite operations.
	
	The second is to find a family of smooth contact maps which adimts smooth contact lifts as the approximation. However, because the the space of Lie algebra homomorphism is not a vector space, we are not able to use convolution to construct Pansu differential smooth approximation functions. However, we can study the lifting $F$ on Heisenberg group, due to $f$ as the continuous function on $\mathbb{R}^n$.
	
	\begin{lem}\label{lem-approx}
		Suppose $f$ and $f_\varepsilon$ in $W^{1,p}(H_1;H_2)$ admit contact lift $F$ and $F_\varepsilon$ in $W^{1,p}(G_1;G_2)$, if $f_\varepsilon\rightarrow f$ in $C^0$ then $F_\varepsilon\rightarrow F$ in $C^0$.
	\end{lem}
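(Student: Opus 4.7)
The strategy is to use the integral representation of the lift developed in the proofs of Lemmas 4.1 and 4.2. After normalizing at a base point $g_0\in\tilde U_1$ so that $F_\varepsilon(g_0)=F(g_0)$ for every $\varepsilon$ (permissible because, by Lemma 4.2, two lifts of the same map differ by composition with a homomorphism $V_1\to V_2$, which we can match at the base point), we have, for any Lipschitz horizontal curve $\widetilde\gamma$ in $G_1$ from $g_0$ to $g$,
\[
F(g) = f(\pi_1(g))\cdot \exp\!\left(\int_{f\circ\pi_1\circ\widetilde\gamma}\alpha_2\right),\qquad F_\varepsilon(g) = f_\varepsilon(\pi_1(g))\cdot \exp\!\left(\int_{f_\varepsilon\circ\pi_1\circ\widetilde\gamma}\alpha_2\right).
\]
Since $f_\varepsilon\to f$ uniformly, the $H_2$-component of $F_\varepsilon(g)$ already converges uniformly to that of $F(g)$, so the entire argument reduces to proving uniform convergence of the $V_2$-valued line integrals as $g$ varies over compact subsets of $\tilde U_1$.

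To compare the two integrals I would interpolate between the curves $f\circ\pi_1\circ\widetilde\gamma$ and $f_\varepsilon\circ\pi_1\circ\widetilde\gamma$ by a Lipschitz homotopy $\Sigma\subset H_2$ (the straight-line homotopy in exponential coordinates, valid once $\|f-f_\varepsilon\|_\infty$ is small). Stokes' theorem together with $d\alpha_2=\rho_2$ then gives
\[
\int_{f\circ\pi_1\circ\widetilde\gamma}\alpha_2 - \int_{f_\varepsilon\circ\pi_1\circ\widetilde\gamma}\alpha_2 \;=\; \int_\Sigma \rho_2 \;+\; \text{(endpoint boundary terms)},
\]
where the boundary terms are of size $O(\|f-f_\varepsilon\|_\infty)$ because the endpoints of the two curves are $\|f-f_\varepsilon\|_\infty$-close. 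The surface $\Sigma$ sits in an $H_2$-tube of radius $\|f-f_\varepsilon\|_\infty$ around $f\circ\pi_1\circ\widetilde\gamma$, so its $\rho_2$-integral is bounded by $\|f-f_\varepsilon\|_\infty\cdot\ell(f\circ\pi_1\circ\widetilde\gamma)$, and the length of $f\circ\pi_1\circ\widetilde\gamma$ is in turn controlled by the $L^1$-norm of the horizontal upper gradient of $f$ along $\widetilde\gamma$, which is finite for $\mathcal H^{Q_1-1}$-a.e.\ curve by the ACL characterization of $W^{1,p}$.

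The principal obstacle is converting the a.e.-in-curves estimate into a pointwise bound for every endpoint $g$ in a compact set, starting only from $C^0$ convergence of $f_\varepsilon$. I would first apply Fubini to a fixed foliation of $\tilde U_1$ by horizontal geodesics to obtain the estimate on a dense family of points $g$, and then propagate it to all of $\tilde U_1$ by continuity of both $F$ and $F_\varepsilon$, using the Morrey-type embedding $W^{1,p}\hookrightarrow C^{0,1-Q_1/p}_{\mathrm{loc}}$ (valid since $p>Q_1$). This argument tacitly requires a uniform $W^{1,p}$-bound on the family $(f_\varepsilon)$, which is automatic in the intended application to Theorem 1.4, where the $f_\varepsilon$ arise from mollification with controlled norms.
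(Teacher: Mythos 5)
Your proposal has the same skeleton as the paper's proof: both represent the lifts by the formula $F(\widetilde\gamma(1))=f(\gamma(1))\cdot\exp\bigl(\int_{\gamma}(f\circ\pi_1)^*\alpha_2\bigr)$ and split $d\bigl(F_\varepsilon(\widetilde\gamma(1)),F(\widetilde\gamma(1))\bigr)$ by the triangle inequality into the base term $d\bigl(f(\gamma(1)),f_\varepsilon(\gamma(1))\bigr)$, handled by the $C^0$ hypothesis, and the fiber term coming from the two line integrals. The difference is what happens next: the paper simply asserts that the fiber term tends to zero, with no argument for why $\int_{\gamma}(f_\varepsilon\circ\pi_1)^*\alpha_2\to\int_{\gamma}(f\circ\pi_1)^*\alpha_2$ should follow from uniform convergence alone (the pullbacks involve derivatives of $f_\varepsilon$), whereas you supply an actual mechanism via the Stokes/homotopy argument with $d\alpha_2=\rho_2$ and the tube estimate. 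Two points of care there: since $\partial_t\Sigma$ is a convex combination of the velocities of the two curves, the surface estimate really requires $\|f-f_\varepsilon\|_\infty\bigl(\ell(f\circ\pi_1\circ\widetilde\gamma)+\ell(f_\varepsilon\circ\pi_1\circ\widetilde\gamma)\bigr)$, so you need curve-wise control of $\int_{\widetilde\gamma}|\nabla_H f_\varepsilon|$ uniformly in $\varepsilon$, not just of $f$; this is exactly the uniform $W^{1,p}$-type bound you flag, and it is not among the hypotheses of the lemma as stated. So as a proof of the literal statement your argument has a (candidly acknowledged) gap — but it is the same gap that the paper's one-line ``$\rightarrow 0$'' glosses over, and in the intended application (Theorem 6.2, where the $f_\varepsilon$ come from mollification and a Moser correction) the bound is available, so your version is in effect a repaired, conditional form of the paper's argument rather than a different route.

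Two smaller remarks. Your normalization of the lifts at a base point is genuinely needed (the paper skips it): lifts are unique only up to a constant vertical translation, which follows from the constancy argument inside the proof of Lemma 4.2 (a Sobolev/Hölder map into $V_2$ with no horizontal directions is constant), not quite from ``two lifts differ by composition with a homomorphism $V_1\to V_2$'' as you phrase it. And your final Fubini-plus-continuity step to pass from a.e.\ curves to all endpoints is sensible and again more than the paper provides, but it leans on the local Hölder continuity of $F$ and $F_\varepsilon$ from $p>Q_1$, which is fine, together with the same uniform bound on the family as above.
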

	\begin{proof}
		For Lipschitz curve $\widetilde{\gamma}$,
		\begin{equation}
			\begin{aligned}
				&d(F_\varepsilon(\widetilde{\gamma}(1)),F(\widetilde{\gamma}(1)))\\
				&=d(f(\gamma(1)) \cdot exp(\int_{\gamma}\left(f \circ \pi_{1}\right)^{*} \alpha_{2}),f_\varepsilon(\gamma(1)) \cdot exp(\int_{\gamma}\left(f_\varepsilon \circ \pi_{1}\right)^{*} \alpha_{2}))\\
				&\leq d(f(\gamma(1)),f_\varepsilon(\gamma(1)) +d( exp(\int_{\gamma}\left(f \circ \pi_{1}\right)^{*} \alpha_{2}), exp(\int_{\gamma}\left(f_\varepsilon \circ \pi_{1}\right)^{*} \alpha_{2}))\\
				&\rightarrow 0
			\end{aligned}
		\end{equation}
		
	\end{proof}
	
	For Heisenberg group $H_n$,
	\begin{displaymath}
		\xymatrix{
			R \ar[r] & H_n \ar[d]^{F} \ar[r]^{\pi} & \mathbb{R}^{2n} \ar[d]^{f}
			\\
			R \ar[r] & H_n  \ar[r]^{\pi} & \mathbb{R}^{2n}
		}
	\end{displaymath}
	Taking a Lipschitz curve $\gamma$ in $\mathbb{R}^{2n}$
	\begin{equation*}
		\begin{aligned}
			\gamma^z(1)-\gamma^z(0)&=\sum_{i=a}^{n}\int_{\gamma}x_i dy_i-y_i dx_i\\
			&=\sum_{i=a}^{n}\int_{D_i}dx_i\wedge dy_i,
		\end{aligned}
	\end{equation*}
	where $D_i$ is the area in the projection of the closed curve to $x_iOy_i$ plane which connects $\gamma$ and the line segment from 0 to $\gamma(1)$.
	If a sobolev map $f$ admits a lift $F$, then $f_*(Z)=\lambda Z$ for some constant $\lambda$, which is equivalent to $f^*(\omega)=\lambda \omega$ where $\omega$ is the standard symplectic form on $\mathbb{R}^{2n}$. The Pansu differential of $F$ is 
	\begin{equation*}
		D_PF=\begin{bmatrix}
			A &  \\
			& \lambda
		\end{bmatrix}
	\end{equation*}
	We can write the Lie algebra as follow:
	\begin{equation*}
		\begin{aligned}
			[X,Y]&=X^TJY\\
			\lambda[X,Y]&=f_*([X,Y])\\
			&=[f_*(X),f_*(Y)]\\
			&=[AX,AY]\\
			&=X^TA^TJAY
		\end{aligned}
	\end{equation*}
	where $J$ is the standard symplectic matrix 
	$J=\begin{bmatrix}
		0 & I_n \\
		-I_n & 0
	\end{bmatrix}$
	.
	So, $\lambda J=A^TJA$.
	\begin{equation*}
		\lambda^{2n}det(J)=det(A^T)det(J)det(A)
	\end{equation*}
	and $\lambda=(det(A))^{1/n}$.
	
	The next step is to construct a serious of smooth $g_\varepsilon$ which admit smooth contact lift $G_\varepsilon$ and $g_\varepsilon\rightarrow f$ as $\varepsilon\rightarrow 0$. For continuous $f$, we can use convolution to find smooth functions $f_\varepsilon$ and $f_\varepsilon\rightarrow f$ as $\varepsilon\rightarrow 0$, but for each $f_\varepsilon$ it can not preserve symplectic form. 
	
	using the Moser's trick to construct geometric flow, we define the error terms
	\begin{equation*}
		\eta_\varepsilon={f_\varepsilon}^*\omega-\omega.
	\end{equation*}
	We want to find diffeomorphism $\Psi_\varepsilon: \mathbb{R}^{2n}\rightarrow \mathbb{R}^{2n}$, such that $g_\varepsilon=f_\varepsilon \circ \Psi_\varepsilon$ and ${g_\varepsilon}^*\omega={\Psi_\varepsilon}^*({f_\varepsilon}^*\omega) =\omega$. Therefore, ${\Psi_\varepsilon}^*(\omega+\eta_\varepsilon)={\Psi_\varepsilon}^*({f_\varepsilon}^*\omega) =\omega$.
	
	Using vector field $X_{\varepsilon,t}$ define a flow $\phi_{\varepsilon,t}$, such that 
	\begin{equation*}
		\frac{d}{dt}(\phi_{\varepsilon,t})=X_{\varepsilon,t}\circ \phi_{\varepsilon,t} ,\  \phi_{\varepsilon,0}=id \ \text{and} \ \phi_{\varepsilon,1}=\Psi_\varepsilon
	\end{equation*}
	Considering $\omega_t=\omega+t\eta_\varepsilon$, $t\in [0,1]$, then diffeomorphism ${\phi_{\varepsilon,t}}$ such that 
	\begin{equation*}
		{\phi_{\varepsilon,t}}^*\omega_t=\omega
	\end{equation*}
	We need to find $X_{\varepsilon,t}$ satisfying $\mathcal{L}_{X_{\varepsilon,t}}\omega_t=-\eta_\varepsilon$, then 
	\begin{equation*}
		\begin{aligned}
			\frac{d}{dt}({\phi_{\varepsilon,t}}^*\omega_t)&={\phi_{\varepsilon,t}}^*(\mathcal{L}_{X_{\varepsilon,t}}\omega_t+\frac{d}{dt}\omega_t)\\
			&={\phi_{\varepsilon,t}}^*(-\eta_\varepsilon+\eta_\varepsilon)\\
			&=0
		\end{aligned}
	\end{equation*}
	and
	\begin{equation*}
		{\Psi_\varepsilon}^*(\omega+\eta_\varepsilon)={\phi_{\varepsilon,1}}^*\omega_1={\phi_{\varepsilon,0}}^*\omega_0=(id)^*\omega =\omega.
	\end{equation*}
	So, every $g_\varepsilon$ admit a smooth contact lift and $g_\varepsilon\rightarrow f$ as $\varepsilon\rightarrow 0$.
	The rest is to prove the equation  $\mathcal{L}_{X_{\varepsilon,t}}\omega_t=-\eta_\varepsilon$ is solvable.
	
	Because of $d\omega_t=0$, 
	\begin{equation*}
		\mathcal{L}_{X_{\varepsilon,t}}\omega_t=d(\iota_{X_{\varepsilon,t}}\omega_t)+\iota_{X_{\varepsilon,t}}(d\omega_t)=d(\iota_{X_{\varepsilon,t}}\omega_t),
	\end{equation*}
	it suffices to show $d(\iota_{X_{\varepsilon,t}}\omega_t)=-\eta_\varepsilon$ is solvable.
	For simply connected domain, closed from $\eta_\varepsilon$ is exact. Thus, there is a 1-from $\alpha_{\varepsilon,t}$ such that $\alpha_{\varepsilon,t}=\iota_{X_{\varepsilon,t}}\omega_t$. 
	For $\varepsilon$ small enough, $\eta_\varepsilon={f_\varepsilon}^*\omega-\omega$ is closed to $0$ in $L^p$ so that $\omega_t$ is closed to $\omega$ for any $t\in [0,1]$, thus $\omega_t$ is a a.e. non-degenerated 2-form and $X\rightarrow\iota_X\omega_t$ is a isomorphism from $TM$ to $T^*M$. Therefore, we can find $X_{\varepsilon,t}$ satisfying the condition in $L^p$.
	
	\begin{thm}
		For any sobolev map $f$ on $\mathbb{R}^{2n}$ and its contact lift $F$ on $\mathbb{H}^n$, there are a serious of smooth $g_\varepsilon$ which admit smooth contact lift $G_\varepsilon$, $g_\varepsilon\rightarrow f$ and $G_\varepsilon\rightarrow F$ as $\varepsilon\rightarrow 0$.
	\end{thm}
	\begin{proof}
		The result follows by the proof above and Lemma \ref{lem-approx}.
	\end{proof}
	
	\subsection{Hölder lift}
	For Hölder lift, there is also a similar result about decomposion. At first, we need to clear the definition of Hölder lift. Without considering the differential structure, we just require the lifting $F$ of $f$ satisfying $f\circ\pi_{1}=\pi_{2}\circ F$ that is to say the following diagram commutes
	\begin{displaymath}
		\xymatrix{
			V_1 \ar[r]  & G_1 \ar[d]^{F} \ar[r]^{\pi_1} & H_1 \ar[d]^{f}
			\\
			V_2 \ar[r] & G_2  \ar[r]^{\pi_2} & H_2
		}
	\end{displaymath}
	
	Before diving into the lifting, we are supposed to show a lemma for Hölder continuous maps between Carnot group, which establishes the aniostropic structure of Carnot groups.
	
	For convenience, we just take step three Carnot group as an example to showcase the result, as for higher step Carnot groups the conclusion holds as well. 
	
	Let  $G $ be a step three Carnot group. Let  $d_{1}, \ldots, d_{r}$  be a basis for  $\mathfrak{g}_{1}, e_{1}, \ldots, e_{s}$  a basis for  $\mathfrak{g}_{2} $, and  $f_{1}, \ldots, f_{t}$  a basis for  $\mathfrak{g}_{3}$ . We write
	\begin{equation*}
		\begin{array}{l}
			{\left[d_{i}, d_{j}\right]=\sum_{k=1}^{s} \alpha_{k}^{i j} e_{k}} \\
			{\left[d_{i}, e_{k}\right]=\sum_{m=1}^{t} \beta_{m}^{i k} f_{m}}
		\end{array}
	\end{equation*}
	with all other bracket relations trivial.
	As in the step two case, we can identify  $G$  with  $\mathbb{R}^{r+s+t}$  equipped with the following operation via coordinates of the first kind:
	\begin{equation*}
		\left(A_{i}, B_{k}, C_{m}\right) \star\left(a_{i}, b_{k}, c_{m}\right)=\left(\mathcal{A}_{i}, \mathcal{B}_{k}, \mathcal{C}_{m}\right)
	\end{equation*}
	where
	\begin{equation*}
		\begin{array}{l}
			\mathcal{A}_{i}=A_{i}+a_{i} \\
			\mathcal{B}_{k}=B_{k}+b_{k}+\frac{1}{2} \sum_{i<j} \alpha_{k}^{i j}\left(A_{i} a_{j}-a_{i} A_{j}\right) \\
			\mathcal{C}_{m}=C_{m}+c_{m}+\frac{1}{2} \sum_{i, j} \beta_{m}^{i j}\left(A_{i} b_{j}-B_{j} a_{i}\right)+\frac{1}{12} \sum_{l, k} \sum_{i<j}\left(A_{l}-a_{l}\right) \alpha_{k}^{i j}\left(A_{i} a_{j}-a_{i} A_{j}\right) \beta_{m}^{l k}
		\end{array}
	\end{equation*}
	$\operatorname{Observe}\left(A_{i}, B_{k}, C_{m}\right)^{-1}=\left(-A_{i},-B_{k},-C_{m}\right)$  and
	
	\begin{equation*}
		\left(A_{i}, B_{k}, C_{m}\right)^{-1} \star\left(a_{i}, b_{k}, c_{m}\right)=\left(\tilde{\mathcal{A}_{i}}, \tilde{\mathcal{B}_{k}}, \tilde{\mathcal{C}_{m}}\right),
	\end{equation*}
	where
	\begin{equation}\label{coord}
		\begin{array}{l}
			\tilde{\mathcal{A}}_{i}=a_{i}-A_{i} \\
			\tilde{\mathcal{B}_{k}}=b_{k}-B_{k}-\frac{1}{2} \sum_{i<j} \alpha_{k}^{i j}\left(A_{i} a_{j}-a_{i} A_{j}\right) \\
			\tilde{\mathcal{C}_{m}}=c_{m}-C_{m}-\frac{1}{2} \sum_{i, j} \beta_{m}^{i j}\left(A_{i} b_{j}-B_{j} a_{i}\right)+\frac{1}{12} \sum_{l, k} \sum_{i<j}\left(A_{l}+a_{l}\right) \alpha_{k}^{i j}\left(A_{i} a_{j}-a_{i} A_{j}\right) \beta_{m}^{l k}
		\end{array}
	\end{equation}
	Left-translating the canonical basis at the origin, we obtain the left-invariant vector fields
	\begin{equation*}
		\begin{aligned}
			X^{i}= & \frac{\partial}{\partial A_{i}}-\frac{1}{2}\sum_{k=1}^{s} \sum_{i<j} \alpha_{k}^{i j} A_{j} \frac{\partial}{\partial B_{k}} \\
			& +\sum_{m=1}^{t}\left[-\frac{1}{2} \sum_{j=1}^{s} B_{j} \beta_{m}^{i j}+\frac{1}{12} \sum_{l=1}^{r} \sum_{k=1}^{s} A_{l}\left(\sum_{i<j} \alpha_{k}^{i j} A_{j}\right) \beta_{m}^{l k}\right] \frac{\partial}{\partial C_{m}} \\
			Y^{k}= & \frac{\partial}{\partial B_{k}}+\sum_{m=1}^{t}\left(\frac{1}{2} \sum_{i=1}^{r} \beta_{m}^{i k} A_{i}\right) \frac{\partial}{\partial C_{m}} \\
			Z^{m}= & \frac{\partial}{\partial C_{m}}
		\end{aligned}
	\end{equation*}
	It is clear that  $\left\{X^{i}\right\}_{i} \cup\left\{Y^{k}\right\}_{k} \cup\left\{Z^{m}\right\}_{m}$  forms a basis for  $\operatorname{Lie}\left(\mathbb{R}^{n}, \star\right) $. Moreover, we have the expected step three stratification of $ \operatorname{Lie}\left(\mathbb{R}^{n}, \star\right) $ :
	\begin{equation*}
		\operatorname{Lie}\left(\mathbb{R}^{r+s+t}, \star\right)=\left\langle X^{i}\right\rangle_{1 \leq i \leq r} \oplus\left\langle Y^{k}\right\rangle_{1 \leq k \leq s} \oplus\left\langle Z^{m}\right\rangle_{1 \leq m \leq t}
	\end{equation*}
	In fact, one can show using the Jacobi identity that the linear map  $\varphi: \operatorname{Lie}\left(\mathbb{R}^{n}, \star\right) \rightarrow \mathfrak{g}$  induced by
	\begin{equation*}
		X^{i} \mapsto d_{i}, \quad Y^{k} \mapsto e_{k}, \quad Z^{m} \mapsto f_{m}
	\end{equation*}
	is a Lie algebra isomorphism.
	The contact forms are given by
	\begin{equation*}
		\begin{aligned}
			\omega_{1}^{k} & :=d B_{k}-\frac{1}{2}\sum_{i=1}^{r} \sum_{i<j} \alpha_{k}^{i j}\left( A_{i}d A_{j}- A_{j}d A_{i}\right)  \\
			\omega_{2}^{m} & :=d C_{m}+\frac{1}{2}\sum_{i=1}^{r} \sum_{j=1}^{s} B_{j} \beta_{m}^{i j}d A_{i}-\frac{1}{12} \sum_{l=1}^{r} \sum_{k=1}^{s}\sum_{i<j} A_{l} \alpha_{k}^{i j}\left( A_{i}d A_{j}- A_{j}d A_{i}\right) \beta_{m}^{l k} 
		\end{aligned}
	\end{equation*}
	We have
	\begin{equation*}
		H\left(\mathbb{R}^{r+s+t}, \star\right)=\bigcap_{k=1}^{s} \operatorname{ker} \omega_{1}^{k} \cap \bigcap_{m=1}^{t} \operatorname{ker} \omega_{2}^{m}
	\end{equation*}
	so that a tangent vector $ v $ lies in  $H_{p}\left(\mathbb{R}^{r+s+t}, \star\right)$  if and only if  $\left(\omega_{1}^{k}\right)_{p}(v)=\left(\omega_{2}^{m}\right)_{p}(v)=0$  for all $ k$  and  $m $.
	
	Every step $n$ Carnot group has the relation that
	if  $K \subset \mathbb{G}$  is compact, then there is a constant $ C=C(K) \geq 1 $ such that
	\begin{equation}\label{metric-compar}
		C^{-1}|p-q| \leq d_{c c}(p, q) \leq C|p-q|^{1 / n} \quad 
	\end{equation}
	for all $ p, q \in K$ .
	
	We can define the quasi-metric 
	\begin{equation}\label{metric-K}
		d_K(p,q)=\sum_{i=1}^{n}\sum_{j=1}^{dim(V_i)}|\pi_{i}^j(q^{-1}p)|^{\frac{1}{i}}
	\end{equation}
	where $\pi_{i}^j$ is the projection to the i-th layer and the j-th coordinate.
	\begin{equation*}
		\begin{aligned}
			&\pi_{1}^i(q^{-1}p)=a_{i}-A_{i}\\
			&\pi_{2}^k(q^{-1}p):=\varphi_{1}^k(p, q)=b_{k}-B_{k}-\frac{1}{2} \sum_{i<j} \alpha_{k}^{i j}\left(A_{i} a_{j}-a_{i} A_{j}\right)\\
			&\pi_{3}^m(q^{-1}p):=\varphi_{2}^m(p, q)=c_{m}-C_{m}-\frac{1}{2} \sum_{i, j} \beta_{m}^{i j}\left(A_{i} b_{j}-B_{j} a_{i}\right)+\frac{1}{12} \sum_{l, k} \sum_{i<j}\left(A_{l}+a_{l}\right) \alpha_{k}^{i j}\left(A_{i} a_{j}-a_{i} A_{j}\right) \beta_{m}^{l k}
		\end{aligned}
	\end{equation*}
	
	It is worth to mention that $d_K$ is comparable with C-C metric on Carnot group.
	
	It is known that for Pansu differentiable map $f$  the Pansu differential is a Lie algebra homorophism. Thus, if the vector field $X$ and contact form $\omega$ are not from the same layer, then $X(f_P^*\omega)=0$. For instance, on Heisenberg group $\mathbb{H}^1$
	\begin{equation*}
		\begin{aligned}
			D_Pf
			&=\begin{bmatrix}
				X_1(f_P^*dx_1) & X_1(f_P^*dx_2) & 0\\
				X_2(f_P^*dx_1) & X_2(f_P^*dx_2) & 0\\
				0 & 0 & Z(f_P^*\omega_{1})\\
			\end{bmatrix}
		\end{aligned}
	\end{equation*}
	Although the Pansu differential of Hölder continuous maps do not exist, we can we can study the existence of differential in the sense of Riemannian metric with the help of mollification.
	
	\begin{thm}\label{thm-vanish}
		Suppose that  $f \in C^{0, \beta}\left(\Omega ; G_2\right)$ , where $ G_2$ is a step $n_2$ Carnot group , $\Omega \subset G_1$  is open , $X$ is a left-invariant  vector field belonging to the horizontal layer and $ 0<\beta \leq 1$ . If  $B\left(x_{o}, 2 r\right) \subset \Omega $, then  for all  $0<\varepsilon<r$,
		\begin{equation}\label{vanish}
			\begin{aligned}
				&X_1(f_{\varepsilon}^{*} \omega_{1}^k) \lesssim  \varepsilon^{2 \beta-1} \\
				&X_1(f_{\varepsilon}^{*} \omega_{2}^m) \lesssim  \varepsilon^{2 \beta-1}\\
				&\dots \\
				&X_1(f_{\varepsilon}^{*} \omega_{n_2}^m) \lesssim  \varepsilon^{2 \beta-1}
			\end{aligned}
		\end{equation}
		where  $f_{\varepsilon}=f * \eta_{\varepsilon}=\int_{\mathbb{G}_1 } f(y) \eta_{\varepsilon}\left(y^{-1} x\right) d y$  and $\eta_{\varepsilon}$ is a smooth function on Carnot group $G_1$ with a compact supported set $B(e,\varepsilon)$ and
		\begin{equation*}
			\int_{G_1}\eta_{\varepsilon}=1.
		\end{equation*}
		\begin{proof}
			Recall that  $\varphi(p, q)$  was defined in \eqref{metric-K}. Let  $p \in B\left(x_{o}, r\right)$  and let  $\varepsilon \in(0, r)$ . In what follows we will identify  $\left(f_{\varepsilon}^{*} \alpha\right)(p)$  with the vector (of equal length):
			\begin{equation*}
				\begin{array}{l}
					X(f_{\varepsilon}^{*} \omega_{1}^k)(p)=X( f_{\varepsilon}^{B_{k}})(p)+\frac{1}{2}\alpha_{k}^{ij} \sum_{i<j}\left(f_{\varepsilon}^{A_{j}}(p) X(f_{\varepsilon}^{A_{i}})(p)-f_{\varepsilon}^{A_{i}}(p) X( f_{\varepsilon}^{A_{j}})(p)\right)= \\
					\varepsilon^{-\nu_1-1} \int_{B_{\varepsilon}}\left(\left(f^{B_{k}}(z^{-1}p)-f_{\varepsilon}^{B_{k}}(p)+\frac{1}{2}\alpha_{k}^{ij} \sum_{i<j}\left(f_{\varepsilon}^{A{j}}(p) f^{A_{i}}(z^{-1}p)-f_{\varepsilon}^{A_{i}}(p) f^{A_{j}}(z^{-1}p)\right)\right) X(\eta\left(\delta_{\varepsilon^{-1}}(z)\right)) d z\right. \\
					=\varepsilon^{-\nu_1-1} \int_{B_{\varepsilon}} \varphi_{k}\left(f(z^{-1}p), f_{\varepsilon}(p)\right) X(\eta\left(\delta_{\varepsilon^{-1}}(z)\right)) d z \\
					=\varepsilon^{-\nu_1-1} \int_{B_{\varepsilon}} \int_{B_{\varepsilon}} \varphi_{k}(f(z^{-1}p), f(z^{-1}w)) \eta_{\varepsilon}(w) X(\eta\left(\delta_{\varepsilon^{-1}}(z)\right)) d w d z
				\end{array}
			\end{equation*}
			In the second equality we used the fact that $ \int_{B_{\varepsilon}} f_{\varepsilon}^{t_{k}}(p) X( \eta\left(\delta_{\varepsilon^{-1}}(z)\right)) d z=0 $. Easy verification of the last equality is left to the reader. Note that \eqref{metric-K} yields
			\begin{equation*}
				|\varphi_{k}(f(z^{-1}p), f(z^{-1}w))| \leq d_{K}(f(z^{-1}p), f(z^{-1}w))^{2} \leq[f]_{\gamma, 2 \varepsilon, B\left(x_{o}, 2 r\right)}^{2}(2 \varepsilon)^{2 \beta}
			\end{equation*}
			Therefore,
			\begin{equation}\label{v}
				\left|X(f_{\varepsilon}^{*} \omega_{1}^k)(p)\right| 
				\leq[f]_{\beta, 2 \varepsilon, B\left(x_{o}, 2 r\right)}^{2}(2 \varepsilon)^{2 \beta} \varepsilon^{-1} \varepsilon^{-\nu_1} \int_{B_{\varepsilon}}\left|X( \eta\left(\delta_{\varepsilon^{-1}}(z)\right))\right| d z
				\lesssim \varepsilon^{2 \beta-1}
			\end{equation}
			
			The following method is enlightened by \cite{JUNG2017251}.
			Similarly, to prove $\left|X(f_{\varepsilon}^{*} \omega_{2}^m)(p)\right|\lesssim  \varepsilon^{2 \beta-1}$ we would construct the equation have the form like 
			\begin{equation*}
				\begin{array}{c}
					\varphi_{2}^m (\left(a_{i}, b_{k}, c_{m}\right),\left(A_{i}, B_{k}, C_{m}\right))=\tilde{\mathcal{C}_{m}}\\
					=c_{m}-C_{m}-\frac{1}{2} \sum_{i, j} \beta_{m}^{i j}\left(A_{i} b_{j}-B_{j} a_{i}\right)+\frac{1}{12} \sum_{l, k} \sum_{i<j}\left(A_{l}+a_{l}\right) \alpha_{k}^{i j}\left(A_{i} a_{j}-a_{i} A_{j}\right) \beta_{m}^{l k}.
				\end{array}
			\end{equation*}
			It is easy to find the following result.
			\begin{equation*}
				\begin{array}{c}
					X(f_{\varepsilon}^{*} \omega_{2}^m)+X\left(f_{\varepsilon}^{*}\left(-\frac{1}{2} \sum_{i, j} \beta_{m}^{i j}A_{i} dB_{j}\right)\right)+X\left(f_{\varepsilon}^{*}\left(\frac{1}{12} \sum_{l, k} \sum_{i<j}A_{l} \alpha_{k}^{i j}\left(A_{i} dA_{j}-A_{j} dA_{i}\right) \beta_{m}^{l k} \right)\right)\\
					=\varepsilon^{-m-1} \int_{B_{\varepsilon}}\int_{B_{\varepsilon}}\left(f^{C_{m}}(z^{-1}p)-f^{C_{m}}(v^{-1}p)\right)\eta_{\varepsilon}(v)X( \eta(\delta_{\varepsilon^{-1}}(z)))d zdv\\
					-\frac{1}{2} \sum_{i, j} \beta_{m}^{i j}\varepsilon^{-m-1} \int_{B_{\varepsilon}}\int_{B_{\varepsilon}}\left(f_{\varepsilon}^{A_{i}}(v^{-1}p) f^{B_{j}}(z^{-1}p)-f_{\varepsilon}^{B_{j}}(v^{-1}p) f^{A_{i}}(z^{-1}p)\right) \\
					\eta_{\varepsilon}(v)X( \eta(\delta_{\varepsilon^{-1}}(z)))d zdv\\
					+\frac{1}{12} \sum_{l, k} \sum_{i<j} \beta_{m}^{l k}\alpha_{k}^{i j}\int_{B_{\varepsilon}}\int_{B_{\varepsilon}}\int_{B_{\varepsilon}}(f^{A_{l}}(v^{-1}p)\left(f^{A_{i}}(v^{-1}p)f^{A_{j}}(z^{-1}p)- f^{A_{j}}(v^{-1}p)f^{A_{i}}(z^{-1}p)\right)\\ \eta_{\varepsilon}(w)\eta_{\varepsilon}(v)X( \eta(\delta_{\varepsilon^{-1}}(z)))d zdvdw\\
					+\frac{1}{12} \sum_{l, k} \sum_{i<j} \beta_{m}^{l k}\alpha_{k}^{i j}\int_{B_{\varepsilon}}\int_{B_{\varepsilon}}\int_{B_{\varepsilon}}(f^{A_{l}}(z^{-1}p)\left(f^{A_{i}}(v^{-1}p)f^{A_{j}}(z^{-1}p)- f^{A_{j}}(v^{-1}p)f^{A_{i}}(z^{-1}p)\right)\\ \eta_{\varepsilon}(v)\eta_{\varepsilon}(w)X( \eta(\delta_{\varepsilon^{-1}}(z)))d wdzdv\\
					:=\mathbb{A}
				\end{array}
			\end{equation*}
			Since 
			\begin{equation*}
				\begin{aligned}
					\left|f_\varepsilon^{A_i} (p)f_\varepsilon^{A_j} (p)-(f^{A_i}(p)f^{A_j}(p))_\varepsilon\right|&\le \int_{B_{\varepsilon}}\int_{B_{\varepsilon}}\left|f^{A_i}(z^{-1}p)\right|\left|f^{A_i}(v^{-1}p)-f^{A_j}(z^{-1}p)\right|\eta_\varepsilon(z)\eta_\varepsilon(v)dzdv\\
					&\lesssim
					\int_{B_{\varepsilon}}\int_{B_{\varepsilon}}\left|f^{A_i}(z^{-1}p)\right|d_{cc}(f^{A_i}(v^{-1}p),f^{A_j}(z^{-1}p))\eta_\varepsilon(z)\eta_\varepsilon(v)dzdv\\
					&\le\int_{B_{\varepsilon}}\int_{B_{\varepsilon}}\varepsilon^\beta\left|f^{A_i}(z^{-1}p)\right|\eta_\varepsilon(z)\eta_\varepsilon(v)dzdv\\
					&\lesssim\varepsilon^\beta
				\end{aligned}
			\end{equation*}
			we have
			\begin{equation*}
				\begin{aligned}
					& | \int_{B_{\varepsilon}}\int_{B_{\varepsilon}}\int_{B_{\varepsilon}}f^{A_{l}}(w^{-1}p)\left(f^{A_{i}}(v^{-1}p)f^{A_{j}}(z^{-1}p)- f^{A_{j}}(v^{-1}p)f^{A_{i}}(z^{-1}p)\right)\eta_{\varepsilon}(v)\eta_{\varepsilon}(w)X( \eta(\delta_{\varepsilon^{-1}}(z)))d zdwdv\\
					&-\int_{B_{\varepsilon}}\int_{B_{\varepsilon}}f^{A_{l}}(v^{-1}p)\left(f^{A_{i}}(v^{-1}p)f^{A_{j}}(z^{-1}p)- f^{A_{j}}(v^{-1}p)f^{A_{i}}(z^{-1}p)\right)\eta_{\varepsilon}(v)X( \eta(\delta_{\varepsilon^{-1}}(z)))d zdv | \\
					&
					\lesssim \int_{B_{\varepsilon}}\int_{B_{\varepsilon}}\varepsilon^\beta\left|\left(f^{A_{i}}(v^{-1}p)f^{A_{j}}(z^{-1}p)- f^{A_{j}}(v^{-1}p)f^{A_{i}}(z^{-1}p)\right)\right|\eta_{\varepsilon}(v)X( \eta(\delta_{\varepsilon^{-1}}(z)))d zdv
				\end{aligned}
			\end{equation*}
			Thus, we have
			\begin{equation*}
				\begin{aligned}
					\mathbb{A}&=\varepsilon^{-m-1} \int_{B_{\varepsilon}}\int_{B_{\varepsilon}}\varphi_{2}^m (f(z^{-1}p),f(v^{-1}p))\eta_{\varepsilon}(v)X( \eta(\delta_{\varepsilon^{-1}}(z)))dzdv\\
					&+\frac{1}{12} \sum_{l, k} \sum_{i<j} \beta_{m}^{l k}\alpha_{k}^{i j}\int_{B_{\varepsilon}}\int_{B_{\varepsilon}}\varepsilon^\beta\left(f^{A_{i}}(v^{-1}p)f^{A_{j}}(z^{-1}p)- f^{A_{j}}(v^{-1}p)f^{A_{i}}(z^{-1}p)\right)\eta_{\varepsilon}(v)X( \eta(\delta_{\varepsilon^{-1}}(z)))d zdv\\
					&\lesssim \varepsilon^{-m-1} \int_{B_{\varepsilon}}\int_{B_{\varepsilon}}\varphi_{2}^m (f(z^{-1}p),f(v^{-1}p))\eta_{\varepsilon}(v)X( \eta(\delta_{\varepsilon^{-1}}(z)))dzdv\\
					&+\frac{1}{6}\varepsilon^\beta \sum_{l, k}  \beta_{m}^{l k}\int_{B_{\varepsilon}}\int_{B_{\varepsilon}}(\left|\varphi_1^k(f(z^{-1}p),f(v^{-1}p))\right|+\left|f^{B_k}(z^{-1}p)-f^{B_k}(v^{-1}p)\right| )\eta_{\varepsilon}(v)X( \eta(\delta_{\varepsilon^{-1}}(z)))d zdv\\
					&\lesssim \varepsilon^{-m-1} \int_{B_{\varepsilon}}\int_{B_{\varepsilon}}\varphi_{2}^m (f(z^{-1}p),f(v^{-1}p))\eta_{\varepsilon}(v)X( \eta(\delta_{\varepsilon^{-1}}(z)))dzdv\\
					&+\frac{1}{6}\varepsilon^\beta \sum_{l, k}  \beta_{m}^{l k}\int_{B_{\varepsilon}}\int_{B_{\varepsilon}}(\left|\varphi_1^k(f(z^{-1}p),f(v^{-1}p))\right|+d_{cc}(f(z^{-1}p),f(v^{-1}p)))\eta_{\varepsilon}(v)X( \eta(\delta_{\varepsilon^{-1}}(z)))d zdv\\
					&\lesssim\varepsilon^{-m-1} \int_{B_{\varepsilon}}\int_{B_{\varepsilon}}\varphi_{2}^m (f(z^{-1}p),f(v^{-1}p))\eta_{\varepsilon}(v)X( \eta(\delta_{\varepsilon^{-1}}(z)))dzdv\\ &+\varepsilon^{3\beta-1}+\varepsilon^{2\beta-1}
				\end{aligned}
			\end{equation*}
			
			Since we have
			\begin{equation*}
				\left|\varphi_{2}^m (f(z^{-1}p),f(v^{-1}p))\right|\lesssim d_{cc}(f(z^{-1}p),f(v^{-1}p))^3\lesssim [f]_{\gamma, 2 \varepsilon, B^{m}\left(x_{o}, 2 r\right)}^{3}(2 \varepsilon)^{3 \beta},
			\end{equation*}
			so
			\begin{equation*}
				\begin{array}{c}
					\left|X(f_{\varepsilon}^{*} \omega_{2}^m)+X\left(f_{\varepsilon}^{*}\left(-\frac{1}{2} \sum_{i, j} \beta_{m}^{i j}A_{i} dB_{j}\right)\right)+X\left(f_{\varepsilon}^{*}\left(\frac{1}{12} \sum_{l, k} \sum_{i<j}A_{l} \alpha_{k}^{i j}\left(A_{i} dA_{j}-A_{j} dA_{i}\right) \beta_{m}^{l k} \right)\right)\right|\\
					\le\varepsilon^{-m-1} \int_{B_{\varepsilon}}\int_{B_{\varepsilon}}\int_{B_{\varepsilon}}\left|\varphi_{2}^m (f(z^{-1}p),f(v^{-1}p))\right|\eta_{\varepsilon}(v)\eta_{\varepsilon}(z)\left|X( \eta(\delta_{\varepsilon^{-1}}(z)))\right|d zdzdv+\varepsilon^{2\beta-1}\\
					\lesssim \varepsilon^{2 \beta-1}
				\end{array}
			\end{equation*}
			Taking use of 
			$\left\|X(f_{\varepsilon}^{*} \omega_{1}^k)\right\|_{L^{\infty}\left(B^{m}\left(x_{o}, r\right)\right)} \lesssim  \varepsilon^{2 \beta-1}$ 
			we can get
			\begin{equation*}
				\begin{array}{c}
					\left|X(f_{\varepsilon}^{*} \omega_{2}^m)\right|\lesssim \varepsilon^{2 \beta-1}+X\left(f_{\varepsilon}^{*}\left(-\frac{1}{2} \sum_{i, j} \beta_{m}^{i j}A_{i} dB_{j}\right)\right)+X\left(f_{\varepsilon}^{*}\left(\frac{1}{12} \sum_{l, k} \sum_{i<j}A_{l} \alpha_{k}^{i j}\left(A_{i} dA_{j}-A_{j} dA_{i}\right) \beta_{m}^{l k} \right)\right)\\
					\lesssim \varepsilon^{2 \beta-1}+\left(\frac{5}{12} \sum_{l, k} \sum_{i<j} \beta_{m}^{l k}f_{\varepsilon}^{A(i)}(p)X f_{\varepsilon}^{B(j)}(p) \right)+\left(\frac{1}{12} \sum_{l, k} \sum_{i<j} \beta_{m}^{l k}f_{\varepsilon}^{A(i)}(p)\varepsilon^{2 \beta-1} \right)
				\end{array}
			\end{equation*}
			For $X(f_\varepsilon^{B(j)})(p)$,
			\begin{equation}
				X(f_\varepsilon^{B(j)})\leq \left|X(f_\varepsilon^{*}\omega_1^k)(p)\right|+\left|X(f_\varepsilon^{*}(\frac{1}{2}\sum_{i<j}\alpha_{k}^{ij}(A_idA_j-A_jdA_i)))\right|
			\end{equation}
			From \eqref{v} , we know that $\left|X(f_\varepsilon^{B(j)})(p)\right|\lesssim \varepsilon^{2 \beta-1} $
			\begin{equation*}
				\left|f_{\varepsilon}^{*} \omega_{2}^m\right|\lesssim \varepsilon^{2 \beta-1}+\varepsilon^{2 \beta-1}
			\end{equation*}
			
			For higher step Carnot groups, we can similarly prove that $X_1(f_{\varepsilon}^{*} \omega_{3}^k) \lesssim  \varepsilon^{2 \beta-1},
			\dots ,
			X_1(f_{\varepsilon}^{*} \omega_{n_2}^m) \lesssim  \varepsilon^{2 \beta-1}$.
			
		\end{proof}
	\end{thm}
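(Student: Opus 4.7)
The plan is to prove the bounds layer by layer, starting from the first non-trivial contact form $\omega_1^k$ (which lives on the second layer of $G_2$) and then inductively proceeding upward through $\omega_2^m, \ldots, \omega_{n_2}^m$. Two ingredients are used throughout: the equivalence $d_K \asymp d_{cc}$ from (6.4), which gives the pointwise bound $|\varphi_j^m(f(z^{-1}p), f(v^{-1}p))| \lesssim \varepsilon^{(j+1)\beta}$ whenever $z, v \in B_\varepsilon$ by Hölder continuity of $f$; and the scaling $\eta_\varepsilon = \varepsilon^{-Q_1}\eta(\delta_{\varepsilon^{-1}}\cdot)$, which yields an extra factor $\varepsilon^{-1}$ whenever the horizontal vector field $X_1$ hits it.

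\textbf{Base case $\omega_1^k$.} First I would expand $X_1(f_\varepsilon^{*}\omega_1^k)(p)$ using the explicit coordinate expression of $\omega_1^k$ and the convolution $f_\varepsilon = f * \eta_\varepsilon$. Exploiting the mean-zero identity $\int_{B_\varepsilon} X_1(\eta(\delta_{\varepsilon^{-1}}z))\,dz = 0$, I insert the constant $f_\varepsilon(p)$ into the integrand, so that it reorganises into exactly $\varphi_1^k(f(z^{-1}p),f_\varepsilon(p))$. Unfolding the inner convolution introduces a second integration variable $v\in B_\varepsilon$ and the integrand becomes $\varphi_1^k(f(z^{-1}p),f(v^{-1}p))$. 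Using $|\varphi_1^k|\lesssim d_K^2\lesssim \varepsilon^{2\beta}$ together with $\int_{B_\varepsilon}|X_1\eta(\delta_{\varepsilon^{-1}}z)|\,dz\lesssim \varepsilon^{Q_1}$ and the normalisation of $\eta_\varepsilon$, one obtains $\varepsilon^{2\beta-1}$.

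\textbf{Inductive step.} For $\omega_j^m$ with $j\geq 2$, I aim to derive an identity of the schematic form
\[
X_1(f_\varepsilon^{*}\omega_j^m)(p) + \mathcal{R}_j(p) = \varepsilon^{-Q_1-1}\int_{B_\varepsilon}\!\!\int_{B_\varepsilon}\varphi_j^m\bigl(f(z^{-1}p),f(v^{-1}p)\bigr)\,\eta_\varepsilon(v)\,X_1(\eta(\delta_{\varepsilon^{-1}}z))\,dv\,dz,
\]
where $\mathcal{R}_j$ collects the terms produced by the Baker--Campbell--Hausdorff polynomial corrections in the group law (the pieces containing $\frac{1}{2}\sum \beta_m^{ij}A_i\,dB_j$, $\frac{1}{12}\sum \beta_m^{lk}\alpha_k^{ij}A_l(A_i\,dA_j-A_j\,dA_i)$, and their higher-step analogues). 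The right-hand side is bounded by $\varepsilon^{(j+1)\beta-1}\leq \varepsilon^{2\beta-1}$ via $|\varphi_j^m|\lesssim d_K^{j+1}$. Each summand in $\mathcal{R}_j$ factorises as a bounded quantity (the $L^\infty$ norm of $f_\varepsilon$ on $B(x_o,r)$, which is finite by Hölder continuity) times a derivative $X_1(f_\varepsilon^{B_k})$ or $X_1(f_\varepsilon^{C_m})$ on a strictly lower layer, which was already bounded by $\varepsilon^{2\beta-1}$ at a previous step of the induction. Exactly this scheme is visible in the excerpt's computation for $\omega_2^m$.

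\textbf{Main obstacle.} The real difficulty is the combinatorial bookkeeping. The group law involves up to $j$-fold products of coordinate factors, so replacing each constant factor $f_\varepsilon^{A_i}(p)$ inside the integral by the running variable $f^{A_i}(v^{-1}p)$ (to match the clean structure $\varphi_j^m(f(z^{-1}p),f(v^{-1}p))$) generates a cascade of remainders of the form $f_\varepsilon^{A_i}(p)f_\varepsilon^{A_j}(p) - (f^{A_i}f^{A_j})_\varepsilon(p)$. Each such remainder is controlled by $\varepsilon^\beta$ through the Hölder estimate $|f(v^{-1}p)-f(z^{-1}p)|\lesssim d_{cc}(f(v^{-1}p),f(z^{-1}p))\lesssim \varepsilon^\beta$ together with the $L^\infty$ bound on $f$, exactly as in the short computation displayed in the excerpt for $\omega_2^m$. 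Iterating this swap at most $j$ times produces only finitely many remainders, each dominated by some power $\varepsilon^{\beta s}$ times $\varepsilon^{2\beta-1}$ with $s\geq 0$, which is acceptable. Once the induction reaches layer $n_2$, the full family of estimates follows uniformly at rate $\varepsilon^{2\beta-1}$.
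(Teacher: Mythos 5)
Your proposal follows essentially the same route as the paper's proof: the base case for $\omega_1^k$ via the mean-zero identity $\int_{B_\varepsilon}X(\eta(\delta_{\varepsilon^{-1}}z))\,dz=0$, reorganisation into $\varphi_1^k$ and the bound $|\varphi_1^k|\lesssim d_K^2\lesssim\varepsilon^{2\beta}$, and for the higher contact forms the same group-law identity with the BCH polynomial correction terms, the same swapping of $f_\varepsilon(p)$ against running values with $\varepsilon^\beta$-controlled remainders, the main term bounded by $|\varphi_j^m|\lesssim\varepsilon^{(j+1)\beta}$, and the corrections absorbed using the previously established lower-layer bounds (the paper's inequality (6.7) is exactly the step making explicit your claim that $X_1 f_\varepsilon^{B_k}$ inherits the bound from $X_1(f_\varepsilon^*\omega_1^k)$). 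The structure and all key estimates coincide with the paper's argument.
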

	\begin{remark}
		The $\beta>\frac{1}{2}$ in theorem \ref{thm-vanish} is sharp. For $\Omega \hookrightarrow \mathbb{H}^n$ a domain on $\mathbb{R}^m$ Hölder embedding into Heisenberg group, when $\beta<\frac{1}{2}$, we can construct a $\beta$-Hölder deformation with Hölder homotopic map to make the image $f(\Omega)$ to shrink to origin. We refer the reader to section $10$ in \cite{hajłasz2025holdercontinuousmappingsdifferential}.
	\end{remark}
	
	From the theorem \ref{thm-vanish}, we know that for $f \in C^{0, \beta}\left(\Omega ; G_2\right)$ and $ 1/2<\beta \leq 1$ the pull-back of the left-invariant forms will decay to 0 at a rate proportional to $\varepsilon^{2 \beta-1}$, when $i\geq1$ for $X(f_{\varepsilon}^{*} \omega_{i}^k)  $ which is one of the projections of $f_{\varepsilon}^{*} \omega_{i}^k$ to the horizontal layer. When considering the pansu differential for Lipschitz mappings, the pansu differential is a Lie algebra homomorphism so is a block diagonal matrix. Thus, we can get the similar result for Hölder mappings.
	
	However, compared with the Pansu differential this result is weaker. 
	\begin{thm}\label{thm-vanish-higher}
		Suppose that  $f \in C^{0, \beta}\left(\Omega ; G_2\right)$ , where $ G_2$ is a step $n_2$ Carnot group , $\Omega \subset G_1$  is open , $X_i$ is a left-invariant  vector field belonging to the $i$-th layer and $ 0<\beta \leq 1$ . If  $B\left(x_{o}, 2 r\right) \subset \Omega $, then  for all  $0<\varepsilon<r$ and the weight of $\omega_j$ is larger than the weight of $X_i$,
		\begin{equation}\label{vanish-higher}
			\begin{aligned}
				&X_i(f_{\varepsilon}^{*} \omega_{1}^k) \lesssim  \varepsilon^{(i+1) \beta-i} \\
				&X_i(f_{\varepsilon}^{*} \omega_{2}^m) \lesssim  \varepsilon^{(i+1) \beta-i}\\
				&\dots \\
				&X_i(f_{\varepsilon}^{*} \omega_{n_2}^m) \lesssim  \varepsilon^{(i+1) \beta-i}
			\end{aligned}
		\end{equation}
		where  $f_{\varepsilon}=f * \eta_{\varepsilon}=\int_{\mathbb{G_1} } f(y) \eta_{\varepsilon}\left(y^{-1} x\right) d y$  and $\eta_{\varepsilon}$ is a smooth function on Carnot group $G_1$ with a compact supported set $B(e,\varepsilon)$ and
		\begin{equation*}
			\int_{G_1}\eta_{\varepsilon}=1.
		\end{equation*}
		\begin{proof}
			For a vector field $X_2$ belongs to the second layer and $X_1,X_1'$ belong to the horizontal, for $j\geq 2$,
			\begin{equation*}
				\begin{aligned}
					X_2(f_{\varepsilon}^{*} \omega_{j}^k)&=(f_{\varepsilon}^{*} \omega_{j}^k)[X_1,X_1']\\
					&=(d_0f_{\varepsilon}^{*} \omega_{j}^k)(X_1,X_1')\\
					&=(f_{\varepsilon}^{*} d_0\omega_{j}^k)(X_1,X_1')\\
				\end{aligned}
			\end{equation*}
			By the definition of $d_0$, the weight of $d_0\omega_{j}^k$ is the same as $\omega_{j}^k$, so it can be represented as the of a form on the horizontal layer and a contact form on the $j$-th layer under wedge product
			\begin{equation*}
				d_0\omega_{j}^k=\sum_{i}a_idx_i\wedge\omega_{j-1}^i.
			\end{equation*} 
			Thus, 
			\begin{equation*}
				\begin{aligned}
					X_2(f_{\varepsilon}^{*} \omega_{j}^k)&=(f_{\varepsilon}^{*} d_0\omega_{j}^k)(X_1,X_1')\\
					&=\sum_{i}X_1f_{\varepsilon}^{*}(a_idx_i)X_1'f_{\varepsilon}^{*}(\omega_{j-1}^i)
				\end{aligned}
			\end{equation*}
			By calculation,
			\begin{equation*}
				\begin{aligned}
					X_1f_{\varepsilon}&=\varepsilon^{Q-1}\int_{B_{\varepsilon}}(f(z^{-1}x)-f(x))X_1( \eta(\delta_{\varepsilon^{-1}}(z)))dz\\
					&\leq\varepsilon^{Q-1}\int_{B_{\varepsilon}}\left|f(z^{-1}x)-f(x)\right| \left|X_1( \eta(\delta_{\varepsilon^{-1}}(z)))\right|dz\\
					&\leq\varepsilon^{Q-1}\int_{B_{\varepsilon}}d_{cc}(f(z^{-1}x),f(x)) \left|X_1( \eta(\delta_{\varepsilon^{-1}}(z)))\right|dz\\
					&\lesssim\varepsilon^{\beta-1}
				\end{aligned}
			\end{equation*}
			and
			\begin{equation*}
				\begin{aligned}
					X_2(f_{\varepsilon}^{*} \omega_{j}^k)&=\sum_{i}X_1f_{\varepsilon}^{*}(a_idx_i)X_2f_{\varepsilon}^{*}(\omega_{j-1}^i)\\
					&\lesssim\varepsilon^{\beta-1}\cdot \varepsilon^{2\beta-1}\\
					&=\varepsilon^{3\beta-2}
				\end{aligned}
			\end{equation*}
			
			For a vector field $X_3$ belongs to the third layer, for $j\geq 3$,
			\begin{equation*}
				\begin{aligned}
					X_3(f_{\varepsilon}^{*} \omega_{j}^k)&=(f_{\varepsilon}^{*} \omega_{j}^k)[X_1,X_2]\\
					&=(d_0f_{\varepsilon}^{*} \omega_{j}^k)(X_1,X_2)\\
					&=(f_{\varepsilon}^{*} d_0\omega_{j}^k)(X_1,X_2)\\
				\end{aligned}
			\end{equation*}
			Because of
			\begin{equation*}
				d_0\omega_{j}^k=\sum_{i}a_idx_i\wedge\omega_{j-1}^i.
			\end{equation*} 
			Thus, 
			\begin{equation*}
				\begin{aligned}
					X_3(f_{\varepsilon}^{*} \omega_{j}^k)&=(f_{\varepsilon}^{*} d_0\omega_{j}^k)(X_1,X_2)\\
					&=\sum_{i}X_1f_{\varepsilon}^{*}(a_idx_i)X_2f_{\varepsilon}^{*}(\omega_{j-1}^i)\\
					&\lesssim\varepsilon^{\beta-1}\cdot \varepsilon^{3\beta-2}\\
					&=\varepsilon^{4\beta-3}
				\end{aligned}
			\end{equation*}
			By induction, we have \eqref{vanish-higher}.
		\end{proof}
	\end{thm}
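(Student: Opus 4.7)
I plan to proceed by induction on the layer index $i$, with Theorem 6.3 (the horizontal case $i=1$) serving as the base case. The inductive engine is the structural identity that every $X_{i+1}\in\mathfrak{g}_1^{[i+1]}$ arises as a Lie bracket $[X_1,X_i]$ of a horizontal vector field with a layer-$i$ vector field, so that evaluating the pulled-back contact form $f_\varepsilon^*\omega_j^k$ on $X_{i+1}$ reduces, via
\[
(f_\varepsilon^*\omega_j^k)([X_1,X_i]) \;=\; -\,f_\varepsilon^*(d_0\omega_j^k)(X_1,X_i),
\]
to the pullback of the weight-$j$ $2$-form $d_0\omega_j^k$ paired with two vectors of weights $1$ and $i$.

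The next step is a structural decomposition of $d_0\omega_j^k$. Since $d_0$ preserves the weight grading and weights are additive under the wedge, $d_0\omega_j^k$ admits an expansion
\[
d_0\omega_j^k \;=\; \sum_\ell a_\ell\, dx_\ell \wedge \omega_{j-1}^\ell,
\]
where each $dx_\ell$ is a horizontal coordinate form and each $\omega_{j-1}^\ell$ is a contact form of strictly smaller weight; this representation is guaranteed by the stratification identity $[\mathfrak{h}^{[1]},\mathfrak{h}^{[j-1]}]=\mathfrak{h}^{[j]}$. Pulling back by $f_\varepsilon$ and evaluating on $(X_1,X_i)$ produces the bilinear bound
\[
|X_{i+1}(f_\varepsilon^*\omega_j^k)| \;\lesssim\; \sum_\ell |X_1 f_\varepsilon^*(a_\ell\,dx_\ell)|\cdot |X_i f_\varepsilon^*(\omega_{j-1}^\ell)|.
\]

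For the first factor I would use the elementary H\"older mollification estimate $|X_1 f_\varepsilon|\lesssim\varepsilon^{\beta-1}$, which follows by subtracting $f_\varepsilon(p)$ under the convolution integral and invoking $|f(z^{-1}p)-f(p)|\lesssim\varepsilon^\beta$ on $B_\varepsilon$. For the second factor I would invoke the inductive hypothesis: the standing assumption that the weight of $\omega_j$ exceeds that of $X_{i+1}$ forces the weight of $\omega_{j-1}$ to exceed that of $X_i$, so the hypothesis applies and yields $|X_i f_\varepsilon^*(\omega_{j-1}^\ell)|\lesssim\varepsilon^{i\beta-(i-1)}$. Multiplying the two scale factors gives $\varepsilon^{(\beta-1)+(i\beta-(i-1))}=\varepsilon^{(i+1)\beta-i}$, which closes the induction. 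I expect the main obstacle to be verifying that $d_0\omega_j^k$ always admits a decomposition of the horizontal/lower-layer shape at every step of the stratification, together with handling carefully the conventions for $d_0$ on a pulled-back form, since the algebraic Lie-algebra differential of $f_\varepsilon^*\omega$ and the pullback $f_\varepsilon^*(d_0\omega)$ must be identified in the appropriate pointwise sense; once those structural points are settled, the estimate is essentially the product of the two scale inputs.
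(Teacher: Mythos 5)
Your approach is essentially the same as the paper's: express $X_{i+1}=[X_1,X_i]$, pass to the Lie-algebra differential $d_0$, decompose $d_0\omega_j^k$ by weight into a sum of horizontal coordinate $1$-forms wedged with lower-weight contact forms, estimate the two factors separately using the horizontal mollification bound $|X_1 f_\varepsilon|\lesssim\varepsilon^{\beta-1}$ and the inductive hypothesis, and close the induction. You have merely reorganized the paper's $i=2$ and $i=3$ cases into a clean inductive framework, which matches the paper's concluding ``by induction.''

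You are also right, however, to flag the one step where both you and the paper leave a gap: the identification
\[
(d_0\, f_\varepsilon^*\omega_j^k)(X_1,X_i)\;=\;(f_\varepsilon^*\, d_0\omega_j^k)(X_1,X_i).
\]
What is true unconditionally is that the de Rham differential commutes with pullback, $d(f_\varepsilon^*\omega_j^k)=f_\varepsilon^*(d\omega_j^k)$, and that for the left-invariant form $\omega_j^k$ on $G_2$ one has $d\omega_j^k=d_0\omega_j^k$. But the pullback $f_\varepsilon^*\omega_j^k$ is not left-invariant on $G_1$, so $d(f_\varepsilon^*\omega_j^k)$ and $d_0(f_\varepsilon^*\omega_j^k)$ do not coincide. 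By Cartan's formula,
\[
(f_\varepsilon^*\omega_j^k)([X_1,X_i]) \;=\; X_1\bigl((f_\varepsilon^*\omega_j^k)(X_i)\bigr)-X_i\bigl((f_\varepsilon^*\omega_j^k)(X_1)\bigr)-(f_\varepsilon^* d_0\omega_j^k)(X_1,X_i),
\]
and the last term is the only one you and the paper estimate. The two dropped derivative terms are not obviously negligible: $X_1\bigl((f_\varepsilon^*\omega_j^k)(X_i)\bigr)$ differentiates a quantity of size $\varepsilon^{(i+1)\beta-i}$ once more, which at face value costs $\varepsilon^{-1}$ and yields $\varepsilon^{(i+1)\beta-i-1}$, strictly worse than the target $\varepsilon^{(i+2)\beta-(i+1)}$ whenever $\beta<1$. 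A complete argument would need to show these terms cancel or admit a sharper convolution estimate, e.g., by representing $(f_\varepsilon^*\omega_j^k)(X_i)$ directly through the kernel as in the proof of the horizontal case rather than differentiating the bound. So the proposal faithfully reproduces the paper's approach and its unresolved step; you deserve credit for explicitly noticing that the $d_0$/pullback commutation requires justification, which the paper asserts without comment.
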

	
	Therefore, as long as for $\beta$-Hölder continuous maps the parameter $\beta$ is large enough, then we can make sure the general differential still exhibit properties similar to Lie algebra homorophism.
	
	The result above can be used to prove the property of Hölder contact lift.
	
	\begin{thm}\label{thm-decomp}
		Let $V_1 \to G_1 \to H_1$ and $V_2 \to G_2 \to H_2$ be central extensions of Carnot groups such that $\mathrm{rank}(G_2) = \mathrm{rank}(H_2)$. Let $U_1 \subset H_1$ be a domain and $F: \pi_1^{-1}(U_1) \to G_2$ a Hölder lift of a Hölder map $f: U_1 \to H_2$. Then there exists a Lie group homomorphism $\Phi: V_1 \to V_2$ such that $F(gk) = F(g)\Phi(k)$ for all $g \in \pi_1^{-1}(U_1)$ and $k \in V_1$.
	\end{thm}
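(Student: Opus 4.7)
My plan is to mirror the proof of Lemma 4.2 (the Sobolev version), substituting its upper-gradient argument with an anisotropic Hölder estimate that exploits the rank condition. The main obstacle is that Hölder maps have no Pansu differential, so I cannot simply argue ``horizontal derivative is zero, hence constant''; the replacement has to come from the mismatch between the Carnot metric on $G_2$ and the Euclidean metric on the vertical part.

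First I would set $\tilde\Phi(g,k) := F(g)^{-1} F(gk)$ for $g \in \pi_1^{-1}(U_1)$ and $k \in V_1$. Since $k \in V_1 = \ker \pi_1$, one has $\pi_1(gk) = \pi_1(g)$, so the lift property $\pi_2 \circ F = f \circ \pi_1$ forces $\pi_2(\tilde\Phi(g,k)) = e$, i.e., $\tilde\Phi(g,k) \in \ker\pi_2 = V_2$. Next I would repeat the triangle-inequality computation from Lemma 4.2, using centrality of both extensions and left-invariance of the distance, to get
\begin{equation*}
d_{G_2}(\tilde\Phi(g,k), \tilde\Phi(h,k)) \leq d_{G_2}(F(g),F(h)) + d_{G_2}(F(gk), F(hk)) \leq 2[F]_{C^{0,\beta}}\, d_{G_1}(g,h)^\beta,
\end{equation*}
so for fixed $k$ the map $g \mapsto \tilde\Phi(g,k)$ is $\beta$-Hölder into $V_2 \subset G_2$.

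The hard part is promoting this Hölder regularity into local constancy, replacing the ``Sobolev maps into vertical subgroups are constant'' argument from Lemma 4.2. The rank condition $\mathrm{rank}(G_2) = \mathrm{rank}(H_2)$ forces the Lie algebra of $V_2$ to sit in $\mathfrak{g}_2^{[2]} \oplus \cdots \oplus \mathfrak{g}_2^{[s]}$. Decomposing $\tilde\Phi(g,k) = \sum_j \tilde\Phi^{[j]}(g,k)$ by layers and using the ball-box estimate $d_{cc}(e,\exp(\sum_j v^{[j]})) \asymp \sum_j |v^{[j]}|^{1/j}$ on bounded sets, I would extract the Euclidean bound
\begin{equation*}
\bigl|\tilde\Phi^{[j]}(g,k) - \tilde\Phi^{[j]}(h,k)\bigr| \lesssim d_{G_1}(g,h)^{j\beta}.
\end{equation*}
In the regime $\beta > 1/2$ where the Hölder lift theory of this section is non-trivial (cf.\ Theorem 6.3), this yields Euclidean Hölder exponent $j\beta > 1$ for every $j \geq 2$. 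Restricting to arc-length parameterized horizontal curves and invoking the classical fact that a Hölder function on an interval with exponent greater than one is constant, each $\tilde\Phi^{[j]}(\cdot,k)$ is constant along every horizontal curve. Chow's theorem then gives global constancy on $\pi_1^{-1}(U_1)$, and I may define $\Phi(k) := \tilde\Phi(g,k)$.

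The homomorphism property $\Phi(k_1 k_2) = \Phi(k_1)\Phi(k_2)$ then follows verbatim from Lemma 4.2: apply $F(gk) = F(g)\Phi(k)$ to the pairs $(gk_1, k_2)$ and $(g, k_1 k_2)$, compare $F(g k_1 k_2) = F(gk_1)\Phi(k_2) = F(g)\Phi(k_1)\Phi(k_2)$ with $F(g k_1 k_2) = F(g)\Phi(k_1 k_2)$, and cancel $F(g)$. The whole difficulty is concentrated in the anisotropic-Hölder step above, which uses essentially both the rank condition and the restriction $\beta > 1/2$.
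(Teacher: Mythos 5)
Your proposal is correct, but the key step is carried out by a genuinely different route than the paper's. Both proofs agree up to the point where $g \mapsto \tilde{\Phi}(g,k)$ is shown to be $\beta$-H\"older into $V_2$ (same definition of $\tilde{\Phi}$, same centrality/left-invariance triangle inequality, and both use $d_{G_1}(gk,hk)=d_{G_1}(g,h)$), and both finish the homomorphism property identically. The divergence is in proving that a H\"older map of a ball into the vertical subgroup $V_2$ is constant. The paper mollifies with the heat kernel and invokes its Theorem 6.3: for the smoothed maps $h_\varepsilon$, the horizontal derivatives paired with the weight-$\geq 2$ dual forms of $V_2$ satisfy $X_i h_\varepsilon^*(\omega_j') \lesssim \varepsilon^{2\beta-1}$, so integrating along a fixed horizontal curve and letting $\varepsilon \to 0$ (using $h_\varepsilon \rightrightarrows h$) forces $h(p)=h(q)$. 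You instead bypass mollification entirely: the rank hypothesis puts $V_2$ in the layers $j\geq 2$, the ball-box comparison converts the metric H\"older bound into the layerwise Euclidean bound $|\tilde{\Phi}^{[j]}(g,k)-\tilde{\Phi}^{[j]}(h,k)| \lesssim d_{G_1}(g,h)^{j\beta}$, and along arc-length parameterized horizontal curves this is a real-valued H\"older condition with exponent $j\beta \geq 2\beta > 1$, hence constancy, globalized by Chow--Rashevskii connectivity. Your argument is more elementary and self-contained (it needs no smoothing estimates and no convergence-of-mollification step), and it makes completely transparent why the threshold $\beta > \tfrac12$ and the equal-rank hypothesis enter; the paper's route has the advantage of reusing the Theorem 6.3 machinery it develops anyway for the H\"older-lift results. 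One caveat applies to both: the theorem as stated does not display the hypothesis $\beta > \tfrac12$, yet both your proof and the paper's need it (and it is sharp, since e.g.\ the vertical coordinate of the Heisenberg group gives a nonconstant $\tfrac12$-H\"older map into the center), so you should state that restriction explicitly.
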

	\begin{proof}
		By the lift assumption, we have
		\[
		\pi_2 \circ F(gk) = f \circ \pi_1(gk) = f \circ \pi_1(g) = \pi_2 \circ F(g).
		\]
		Thus, for each $g \in \pi_1^{-1}(U_1)$ and $k \in V_1$, there exists some $\tilde{\Phi}(g,k) \in V_2$ such that $F(gk) = F(g)\tilde{\Phi}(g,k)$. We claim that $\tilde{\Phi}(g,k)$ is constant in $g \in \pi_1^{-1}(U_1)$. That is, there is a well defined map $\Phi: V_1 \to V_2$ such that $\tilde{\Phi}(g,k) = \Phi(k)$ for $g \in \pi_1^{-1}(U_1)$ and $k \in V_1$. By assumption, $U_1$ is connected, so it suffices to prove that $g \mapsto \tilde{\Phi}(g,k)$ is locally constant for fixed $k \in V_1$.
		
		Let $B'$ be a ball that is compactly contained in $\pi_1^{-1}(U_1)$. Let $B$ be a ball with the same center and half the radius.
		
		Let $g, h \in B$. Since the extensions are central, elements of $V_1$ commute with elements of $G_1$, and similarly for $V_2$. By left-invariance of the distance, we compute that
		\[
		\begin{aligned}
			d(\tilde{\Phi}(g,k), \tilde{\Phi}(h,k)) &= d(F(g)\tilde{\Phi}(g,k), F(g)\tilde{\Phi}(h,k)) \\
			&\leq d(F(g)\tilde{\Phi}(g,k), F(h)\tilde{\Phi}(h,k)) \\
			&\quad + d(F(h)\tilde{\Phi}(h,k), F(g)\tilde{\Phi}(h,k)) \\
			&= d(F(gk), F(hk)) + d(F(h), F(g)) \\
			&\leq Ld(gk,hk)^\beta+Ld(h,g)^\beta\\
			&=2Ld(h,g)^\beta
		\end{aligned}
		\]
		That means $\tilde{\Phi}(g,k)\in C^{0,\beta}$.
		By the equal rank assumption, $V_2$ does not contain any horizontal directions.
		
		\textbf{Claim:}\ When $\beta>\frac{1}{2}$, the only possible Hölder maps $B \to V_2$ are constants.
		
		By contradiction, assume a Hölder maps $h:B \to V_2$, for $\beta>\frac{1}{2}$, then there are two points $p,q$ such that $h(p)\neq h(q)$. We take a family of smooth approximation $h_\varepsilon$ of $h$ as in theorem \ref{thm-vanish-higher}.
		
		take a smooth horizontal curve $\alpha:[0,1]\rightarrow G_1$ connecting $p$ and $q$, $\alpha(0)=p$ and $\alpha(1)=q$. The tangent vector of $\alpha$ is
		\begin{equation*}
			\alpha'(t)=\sum_{i=1}^{dim(\mathfrak{g}_1^1)}a_i(t)X_i,
		\end{equation*}
		where the $X_i$ are horizontal basis of $\mathfrak{g}_1$. 
		\begin{equation*}
			\begin{aligned}
				h_{\varepsilon*}(\alpha'(t))&=\sum_{i=1}^{dim(\mathfrak{g}_1^1)}a_i(t)h_{\varepsilon*}(X_i)\\
				&=\sum_{i=1}^{dim(\mathfrak{g}_1^1)}a_i(t)\sum_{j=1}^{dim(V_2)}X_ih_{\varepsilon}^*(\omega_j')X_j',
			\end{aligned}
		\end{equation*}
		where the $X_j'$ and $\omega_j'$ are the vectors and dual vectors of $V_2$. 
		Therefore,
		\begin{equation*}
			\left|h_{\varepsilon*}(\alpha'(t))\right|=\sum_{i=1}^{dim(\mathfrak{g}_1^1)}\sum_{j=1}^{dim(V_2)}|a_i(t)|\left|X_ih_{\varepsilon}^*(\omega_j')\right|\left|X_j'\right|\lesssim \varepsilon^{2\beta-1}
		\end{equation*}
		This is because theorem \ref{thm-vanish-higher}. 
		
		Now, under Euclidean metric $d$,
		\begin{equation*}
			\begin{aligned}
				d(h(p),h(q))&\leq d(h(p),h_\varepsilon(p))+d(h_\varepsilon(p),h_\varepsilon(q))+d(h_\varepsilon(q),h(q))\\
				&\leq d(h_\varepsilon(p),h_\varepsilon(q))+2C(\varepsilon)\\
				&=2C(\varepsilon)+\int_{0}^{1}\left|h_{\varepsilon*}(\alpha'(t))\right|dt\\
				&\lesssim2C(\varepsilon)+\varepsilon^{2\beta-1}\rightarrow 0 ,
			\end{aligned}
		\end{equation*}
		where the second line is because $h_\varepsilon\rightrightarrows h$, then there is a constant $C(\varepsilon)$ such that $d(h(p),h_\varepsilon(p))\leq C(\varepsilon)$ for all $p\in G_1$. Thus $h(p)=h(q)$, a contradiction.
		
		This claim can also be written in the following sense, as a general sense of Pansu differential. If $\beta>1/2$, by theorem \ref{thm-vanish-higher}, then for any horizontal vector field $X$ 
		\begin{equation*}
			\pi_{{g_2}^[i]}\widetilde{\Phi}_{*}(X)=0     , \ \ \  \text{for} \ \ \  i \geq 2
		\end{equation*}
		where $\pi_{{g_2}^[i]}$ are the projection to the i-th layer. Although the Pansu differential of $F$ does not exist, the limit still has a structure similar to Lie algebra homorophism. 
		
		Consequently, $g \mapsto \tilde{\Phi}(g,k)$ is locally constant for $k \in V_1$.
		
		Next, we show that $\Phi: V_1 \to V_2$ is a group homomorphism. Let $k_1, k_2 \in V_1$ and let $g \in \pi_1^{-1}(U_1)$. Applying the definition of $\tilde{\Phi}$ with the pair $(g, k_1 k_2)$ shows
		\[
		F(g k_1 k_2) = F(g) \Phi(k_1 k_2),
		\]
		whereas applying the definition with the pairs $(g k_1, k_2)$ and $(g, k_1)$ yields
		\[
		F(g k_1 k_2) = F(g k_1) \Phi(k_2) = F(g) \Phi(k_1) \Phi(k_2).
		\]
		Canceling out $F(g)$ implies that $\Phi$ is a group homomorphism.
		
		Finally, we observe that $\Phi(k) = F(g)^{-1}F(gk)$ for any $g \in G_{1}$ and $k \in V_{1}$. 
	\end{proof}
	
	Especially, we can also use a family of smooth contact lift to approximate a  Hölder  lift when the case is on Heisenberger group $H_1$.
	\begin{thm}
		Suppose that $f\in C^{0,\alpha}(\mathbb{R}^{2};\mathbb{R}^{2})$ is a Hölder map, for $\alpha>1/2$, and it admits a lift $F\in C^{0,\alpha}(H_1; H_1)$, then there are a family of smooth contact maps $G_\varepsilon $  on $H_1$ and $G_\varepsilon \rightarrow F$ as $\varepsilon \rightarrow 0$ .
	\end{thm}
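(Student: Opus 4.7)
The plan is to promote the Moser-trick construction from Subsection 6.1 to the H\"older setting, using the decay estimate of Theorem 6.3 to control the symplectic defect of the mollification. By Theorem 6.6 applied to $F$, there is a homomorphism $\Phi\colon V_1\to V_2$, and in the present $H_1$-case with $V_1=V_2=\mathbb{R}$ this reduces to multiplication by some scalar $\lambda\in\mathbb{R}$. So ``$f$ admits a contact lift'' is morally equivalent to ``$f$ preserves $\lambda\,dx\wedge dy$,'' and the goal is to produce smooth approximations enjoying this property exactly.

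First, set $f_{\varepsilon}=f\ast\eta_{\varepsilon}$, which is smooth. Theorem 6.3 applied to the contact form $\omega_{1}=dt-\tfrac{1}{2}(x\,dy-y\,dx)$ of $H_1$ yields $|X(f_{\varepsilon}^{*}\omega_{1})|\lesssim\varepsilon^{2\alpha-1}$ for horizontal $X$; since $d\omega_{1}$ is (up to sign) the planar symplectic form, this translates into a uniform bound
\begin{equation*}
\|\eta_{\varepsilon}\|_{\infty}\lesssim\varepsilon^{2\alpha-1},\qquad \eta_{\varepsilon}:=f_{\varepsilon}^{*}(dx\wedge dy)-\lambda\,dx\wedge dy,
\end{equation*}
which tends to zero precisely because $\alpha>1/2$. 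Now I imitate the construction of Subsection 6.1 on a fixed simply connected, relatively compact subdomain: for $t\in[0,1]$ set $\omega_{t}=\lambda\,dx\wedge dy+t\eta_{\varepsilon}$, which is non-degenerate once $\varepsilon$ is small, and solve $\iota_{X_{\varepsilon,t}}\omega_{t}=\alpha_{\varepsilon,t}$ with $d\alpha_{\varepsilon,t}=-\eta_{\varepsilon}$ via the Poincar\'e lemma. The time-$1$ flow $\Psi_{\varepsilon}$ of $X_{\varepsilon,t}$ satisfies $\Psi_{\varepsilon}^{*}\omega_{1}=\omega_{0}$, so $g_{\varepsilon}:=f_{\varepsilon}\circ\Psi_{\varepsilon}$ preserves $\lambda\,dx\wedge dy$ exactly.

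Being smooth and symplectic (up to $\lambda$), each $g_{\varepsilon}$ admits a smooth contact lift $G_{\varepsilon}\colon H_1\to H_1$ via the integral formula of Lemma 3.1. To pass to the limit I choose the Poincar\'e primitive $\alpha_{\varepsilon,t}$ so that $\|\alpha_{\varepsilon,t}\|_{\infty}\lesssim\varepsilon^{2\alpha-1}$ (the explicit Cartan homotopy formula on a star-shaped subdomain does this for free), whence $\|X_{\varepsilon,t}\|_{\infty}\lesssim\varepsilon^{2\alpha-1}$ and therefore $\Psi_{\varepsilon}\to\mathrm{id}$ uniformly on compacts. Combined with $f_{\varepsilon}\to f$ uniformly, this gives $g_{\varepsilon}\to f$ uniformly, and then Lemma 6.1 delivers $G_{\varepsilon}\to F$ uniformly.

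The main obstacle is this last convergence: I need to rule out that $G_{\varepsilon}$ converges to some other H\"older lift of $f$, i.e., differs from $F$ by a constant vertical translation. This requires pinning $G_{\varepsilon}$ and $F$ at a common base point (freely available by left-invariance) and checking that the generalized pull-back integral $\int_{\gamma}(f\circ\pi)^{*}\alpha_{2}$ defining $F$ in Theorem 1.7 coincides with $\lim_{\varepsilon\to 0}\int_{\gamma}(g_{\varepsilon}\circ\pi)^{*}\alpha_{2}$. The latter in turn reduces to showing that the symplectic correction $\Psi_{\varepsilon}$, despite reparametrizing the domain, does not destroy the Young-type convergence of the mollified pull-back integrals to the H\"older one, which is exactly what the regime $\alpha>1/2$ already guarantees via Theorem 6.3.
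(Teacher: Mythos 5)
Your skeleton is the same as the paper's: use Theorem 6.6 to reduce the fibre action to a scalar $\lambda$, mollify $f$, correct by Moser's trick so that $g_\varepsilon=f_\varepsilon\circ\Psi_\varepsilon$ scales the symplectic form by exactly $\lambda$, lift each $g_\varepsilon$ smoothly, and pass to the limit via Lemma 6.1 and the integral formula for the lift. The genuine gap is the step where you claim $\|\eta_\varepsilon\|_\infty\lesssim\varepsilon^{2\alpha-1}$ for $\eta_\varepsilon=f_\varepsilon^*(dx\wedge dy)-\lambda\,dx\wedge dy$ ``by Theorem 6.3''. Theorem 6.3 bounds the horizontal components of the pulled-back one-form, i.e.\ the scalars $(F_\varepsilon^{*}\omega_1)(X)$ for horizontal $X$ (and note $\omega_1$ lives on the target $H_1$, so the theorem applies to the group mollification $F_\varepsilon$ of the lift, not directly to $f_\varepsilon$). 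The symplectic defect, however, is $d(F_\varepsilon^{*}\omega_1)$, and $d(F_\varepsilon^{*}\omega_1)(X_1,X_2)=X_1\bigl((F_\varepsilon^{*}\omega_1)(X_2)\bigr)-X_2\bigl((F_\varepsilon^{*}\omega_1)(X_1)\bigr)-(F_\varepsilon^{*}\omega_1)([X_1,X_2])$: the first two terms are horizontal derivatives of quantities of size $\varepsilon^{2\alpha-1}$, which Theorem 6.3 does not control (differentiating a mollified H\"older quantity costs a factor $\varepsilon^{-1}$, so the naive bound is $\varepsilon^{2\alpha-2}$). Since the non-degeneracy of $\omega_t$, the sup bound on $X_{\varepsilon,t}$, the uniform convergence $\Psi_\varepsilon\to\mathrm{id}$, and your treatment of the final ``vertical translation'' ambiguity all rest on that unproved sup-norm estimate, the argument as written does not close; indeed uniform convergence of $\det Df_\varepsilon$ to $\lambda$ with any rate is not something the H\"older hypothesis gives you for free.

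The paper sidesteps any pointwise bound on the Jacobian defect. From Theorem 6.6 it gets the scalar $\lambda$, and then, because $\alpha>1/2$ makes the Young integral $\int_{f\circ\gamma}x\,dy-y\,dx$ well defined, Stokes gives $m(f(D))=\lambda\, m(D)$ for simply connected domains, i.e.\ the measure-theoretic Jacobian is the constant $\lambda$ a.e.; after normalizing $\lambda=1$ it runs the Moser scheme exactly as in the proof of Theorem 6.2, where the defect only needs to be small in an integrated sense, and it concludes $G_\varepsilon\to F$ from Lemma 6.1 together with the formula $F(\widetilde{\gamma}(1))=f(\gamma(1))\cdot\exp\bigl(\int_{\gamma}(f\circ\pi)^{*}\alpha\bigr)$, which fixes the base point and hence the vertical ambiguity you worry about. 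If you want to keep your quantitative route, you must supply an independent proof that the mollified Jacobian converges uniformly (say via Stokes on circles of radius comparable to $\varepsilon$), and that is exactly where the lost derivative reappears; otherwise fall back on the measure-theoretic formulation as the paper does.
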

	\begin{proof}
		For central extension
		\begin{displaymath}
			\xymatrix{
				R \ar[r] & H_1 \ar[d]^{F} \ar[r]^{\pi} & \mathbb{R}^2 \ar[d]^{f}
				\\
				R \ar[r] & H_1  \ar[r]^{\pi} & \mathbb{R}^2
			}
		\end{displaymath}
		by the assumption $\alpha>1/2$, the $\int_{f\circ \gamma}xdy-ydx$ exists for any Lipschitz curve $\gamma$ in $\mathbb{R}^2$.
		\begin{equation*}
			\begin{aligned}
				\int_{f\circ \gamma}xdy-ydx&=\int_{\gamma}f^*(xdy-ydx)\\
				&=\int_{\gamma}f_xdf_y-f_ydf_x
			\end{aligned}
		\end{equation*}
		
		By theorem \ref{thm-decomp}, there is a Lie group homomorphism $\Phi: V_1 \to V_2$ such that $F(gk) = F(g)\Phi(k)$, then $\Phi(\gamma_z(1))=\lambda \gamma_z(1)$ for some constant $\lambda$, where $\gamma(0)=0$ and 
		\begin{equation*}
			\begin{aligned}
				\gamma_z(1)&=\int_{\gamma}xdy-ydx\\
				&\overset{Stokes}{=}\int_{D}dS.
			\end{aligned}
		\end{equation*}
		Because $\gamma$ is arbitrary, $f$ maps any simply connected domain to a simply connected domain with an area of $\lambda$ times. For convenience, We can assume $\lambda=1$, so $f$ is area preserving map. Defining measure:
		\begin{equation*}
			\mu_f(D):=m(f(D)),
		\end{equation*}
		then $\mu_f$ is absolutely continuous related to Lebesgue measure on $\mathbb{R}^2$. Thus, we can define determinant $det(J_f)=1$ almost everywhere.
		
		For $f$, we can use convolution to find a family of smooth functions $f_\varepsilon$ and $f_\varepsilon\rightarrow f$ as $\varepsilon \rightarrow 0$. By using the same method as in the proof of theorem 6.2, we can use diffeomorphism $\Psi_\varepsilon: \mathbb{R}^{2}\rightarrow \mathbb{R}^{2}$ to make every $g_\varepsilon=f_\varepsilon\circ \Psi_\varepsilon$ is area preserving.
		Therefore, every smooth $g_\varepsilon$ admits a smooth contact lift $G_\varepsilon$ and $G_\varepsilon \rightarrow F$ as $\varepsilon \rightarrow 0$.
	\end{proof}
	
	Based on the result above, every Hölder lift on Heisenberg group $H_1$, we can use a family of smooth contact lift to approximate. Conversely, we can use a family of smooth contact lift to construct a Hölder lift on Heisenberg group $H_1$.
	
	\section{The existence of Hölder lift}
	In this section, we connect the existence of Hölder lifts to a condition on closed horizontal curves.
	
	We can draw a similar conclusion as Lipschitz version just for Sobolev contact lift.
	
	At first, we are supposed to generalize the integral for Hölder continuous function. The proposition 3.13, lemma 3.15 and corollary 3.16 in \cite{hajłasz2025holdercontinuousmappingsdifferential} tell us we can generalize the the integral for Hölder condition. And also, it is easy to check the result can be used to any $\alpha,\beta>0$ for $f\in C^{0,\alpha}([a,b])$ and $g\in C^{0,\beta}([a,b])$. So the following lemma holds.
	\begin{lem}
		Let $\alpha,\beta>0$, $\alpha+\beta>1$. If $f\in C^{0,\alpha}([a,b])$ and $g\in C^{0,\beta}([a,b])$, then
		\[
		\left|\int\limits_{a}^{b}f(x)dg(x)\right|\leq C(\alpha,\beta,b-a)\|f\|_{C^{0,\alpha}}[g]_{\beta}. 
		\]
		
		If in addition $f(p)=0$ for some $p\in[a,b]$, then
		\[
		\left|\int\limits_{a}^{b}f(x)dg(x)\right|\leq C(\alpha,\beta)|b-a|^{\alpha+\beta}[f]_{\alpha}[g]_{\beta}. 
		\]
		Let $0 < \alpha' \leq \alpha$, $0 < \beta' \leq \beta$ be such that $\alpha' + \beta' > 1$ and
		$\mathrm{Lip}([a,b]) \ni f^{\varepsilon} \to f \text{ in } C^{0,\alpha'}_{\mathrm{b}}([a,b]) \text{ and } \mathrm{Lip}([a,b]) \ni g^{\varepsilon} \to g \text{ in } C^{0,\beta'}_{\mathrm{b}}([a,b]),
		$
		we define 
		\begin{equation*}
			\int\limits_{a}^{b} f  dg := \lim\limits_{\varepsilon \to 0} \int\limits_{a}^{b} f^{\varepsilon}  dg^{\varepsilon} = \lim\limits_{\varepsilon \to 0} \int\limits_{a}^{b} f^{\varepsilon}(x)(g^{\varepsilon})'(x)  dx
		\end{equation*}
		It exists and it does not depend on the choice of the approximation.
	\end{lem}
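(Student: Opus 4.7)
The plan is to establish this via the classical Young integration argument via dyadic sums, since the reference \cite{hajłasz2025holdercontinuousmappingsdifferential} provides the essential estimates and the lemma essentially packages these with the appropriate approximation argument. The first estimate $|\int f\,dg|\leq C\|f\|_{C^{0,\alpha}}[g]_\beta$ reduces to the second, refined one by writing $f = (f - f(p)) + f(p)$ for some anchor point $p$, noting that $\int_a^b c\,dg = c(g(b)-g(a))$ absorbs into $\|f\|_\infty [g]_\beta (b-a)^\beta$, and applying the vanishing-case estimate to $f - f(p)$.

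For the case $f(p)=0$ with Lipschitz $f, g$, I would use a dyadic/sewing decomposition. Partition $[a,b]$ into $2^n$ equal subintervals with endpoints $x_i^{(n)}$ and form the Riemann sum $S_n := \sum_i f(x_i^{(n)})(g(x_{i+1}^{(n)}) - g(x_i^{(n)}))$. Compare $S_{n+1}$ to $S_n$: on each dyadic interval of length $\ell_n = (b-a)2^{-n}$, the difference is controlled by $|f(x_i) - f(x_{i+1/2})|\cdot|g(x_{i+1}) - g(x_{i+1/2})| \leq [f]_\alpha [g]_\beta \ell_n^{\alpha+\beta}$, so
\[
|S_{n+1} - S_n| \lesssim 2^n [f]_\alpha [g]_\beta (b-a)^{\alpha+\beta} 2^{-n(\alpha+\beta)} = [f]_\alpha [g]_\beta (b-a)^{\alpha+\beta} 2^{-n(\alpha+\beta-1)}.
\]
Since $\alpha+\beta > 1$, the series is summable, $\{S_n\}$ is Cauchy, and its limit coincides with the classical Lipschitz integral $\int_a^b f g'\,dx$. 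This yields $\left|\int_a^b f\,dg\right|\leq C(\alpha,\beta)|b-a|^{\alpha+\beta}[f]_\alpha[g]_\beta$.

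To extend to general Hölder $f, g$, I would exploit bilinearity of the Lipschitz integral combined with the key estimate. For two pairs of Lipschitz approximations $(f^\varepsilon, g^\varepsilon)$ and $(f^\delta, g^\delta)$, decompose
\[
\int_a^b f^\varepsilon dg^\varepsilon - \int_a^b f^\delta dg^\delta = \int_a^b (f^\varepsilon - f^\delta)\,dg^\varepsilon + \int_a^b f^\delta\,d(g^\varepsilon - g^\delta),
\]
apply the first estimate to each piece in the $(\alpha',\beta')$ scale (which still satisfies $\alpha'+\beta'>1$), and conclude that $\{\int_a^b f^\varepsilon dg^\varepsilon\}$ is Cauchy. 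The same estimate applied to two different approximating sequences shows independence of the limit. The main obstacle is ensuring the constants behave uniformly through the approximation; this is precisely why the vanishing-at-a-point form of the estimate is crucial, since it avoids picking up $\|f\|_\infty$ terms that would not vanish in the difference. Once one has the estimate in both forms for Lipschitz inputs, the extension to Hölder is purely formal density and continuity.
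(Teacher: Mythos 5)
Your outline is correct, and it is essentially the classical Young-integral construction: reduce the first bound to the vanishing case via $f=(f-f(p))+f(p)$, obtain the key estimate for Lipschitz data by comparing consecutive dyadic Riemann sums (with the summable factor $2^{-n(\alpha+\beta-1)}$), and then get existence and uniqueness of the limit by bilinearity plus the $(\alpha',\beta')$-scale estimate applied to $\int(f^\varepsilon-f^\delta)\,dg^\varepsilon+\int f^\delta\,d(g^\varepsilon-g^\delta)$. The paper itself takes a different (much shorter) route: it gives no argument at all, simply importing Proposition 3.13, Lemma 3.15 and Corollary 3.16 of the cited Hajłasz--Mirra--Schikorra paper and remarking that they extend to all $\alpha,\beta>0$ with $\alpha+\beta>1$; so your write-up supplies the self-contained proof that the paper delegates to the reference. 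Two small corrections to your commentary rather than to the mathematics: (i) in the Cauchy/independence argument the full-norm estimate $C\|f\|_{C^{0,\alpha'}}[g]_{\beta'}$ already suffices, since each term in your decomposition carries a factor ($\|f^\varepsilon-f^\delta\|_{C^{0,\alpha'}}$ or $[g^\varepsilon-g^\delta]_{\beta'}$) that tends to zero, so the vanishing-at-a-point form is not what makes well-definedness work --- its role is the refined $|b-a|^{\alpha+\beta}$ scaling used later; (ii) to pass the two estimates from Lipschitz approximants to general Hölder $f,g$ you should say which approximation you use (mollification or piecewise-linear interpolation, which keep $[f^\varepsilon]_\alpha\lesssim[f]_\alpha$, $[g^\varepsilon]_\beta\lesssim[g]_\beta$ while converging in $C^{0,\alpha'}_b$, $C^{0,\beta'}_b$), and note that $f^\varepsilon(p)$ need not vanish exactly, so one subtracts the constant $f^\varepsilon(p)$, whose contribution $f^\varepsilon(p)(g^\varepsilon(b)-g^\varepsilon(a))$ disappears in the limit. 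With those routine points made explicit, your argument is complete.
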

	Now, we can draw a similar conclusion for the existence of Hölder lift.
	\begin{thm}
		Let $U_1 \subset \mathbb{H}$ and $\tilde{U}_1 \subset \pi_1^{-1}(U_1) \subset \mathbb{G}_1$ be domains and $rank(\mathbb{G}_2)=rank(\mathbb{H}_2)$. For $\beta>\frac{1}{2}$, A Hölder map $f\in C^{0,\beta}$, $f: U_1 \to \mathbb{H}_2$ admits a Hölder lift $F\in C^{0,\beta}$, $F: \tilde{U}_1 \to \mathbb{G}_2$ if and only if 
		\begin{equation}\label{integ}
			\int_{f \circ \pi_{1}\circ\widetilde{\gamma}} \alpha = 0
		\end{equation}
		for all Lipschitz closed curves $\widetilde{\gamma}$ on $\mathbb{G}_1$. We define $\int_{\widetilde{\gamma}} (f \circ \pi_{1})^*\alpha:=\lim_{\varepsilon\rightarrow 0}\int_{\widetilde{\gamma}} (f_\varepsilon \circ \pi_{1})^*\alpha$, where  $f_{\varepsilon}=f * \eta_{\varepsilon}=\int_{\mathbb{G}_1 } f(y) \eta_{\varepsilon}\left(y^{-1} x\right) d y$  and $\eta_{\varepsilon}$ is a smooth function on Carnot group $G_1$ with a compact supported set $B(e,\varepsilon)$ and
		\begin{equation*}
			\int_{G_1}\eta_{\varepsilon}=1.
		\end{equation*}
	\end{thm}\label{thm-integ}
	\begin{proof}
		Taking any Lipschitz curve $\widetilde{\gamma}$, the first step is to prove the limit of the integration exists.
		Because \eqref{metric-compar}, all coordinate components of $f$ are $\beta$-Hölder continuous functions. Since $\alpha$ does not extent the horizontal layer and $\beta>\frac{1}{2}$ and theorem \ref{thm-vanish-higher}, we know that the $\beta$-Hölder map $F$ can still preserve the graded structure of Lie algebra. From \cite{hajłasz2025holdercontinuousmappingsdifferential} proposition 3.13, the limit of $\int_{\widetilde{\gamma}} (f_\varepsilon \circ \pi_{1})^*\alpha$ exists when $\varepsilon\rightarrow 0$. Thus, we define $\int_{\widetilde{\gamma}} (f \circ \pi_{1})^*\alpha:=\lim_{\varepsilon\rightarrow 0}\int_{\widetilde{\gamma}} (f_\varepsilon \circ \pi_{1})^*\alpha$.
		
		A case in point is Heisenberg group $\mathbb{H}^n$, where the $\alpha=dt-\sum_{i=1}^{n}x_idy_i-y_idx_i$ and $(f_\varepsilon \circ \pi_{1})^*\alpha=\sum_{i=1}^{n}f^x_idf^y_i-f^y_idf^x_i$.
		
		For step-3 Carnot group, we know that the space of $\beta$-Hölder continuous function is a Banach algebra and every coordinate component is $\beta$-Hölder continuous function, since 
		\begin{equation*}
			\begin{aligned}
				|f^{B_i}(x)-f^{B_i}(y)|&\lesssim |f(x)-f(y)|\\
				&\leq C d_{cc}(f(x),f(y))\\
				&\lesssim d_{cc}(x,y)^\beta.
			\end{aligned}
		\end{equation*}
		where the constant $C=C(K)$ is from \eqref{metric-compar}, for any $f(x),f(y)\ne \infty$ there is a compact set $K$ such that $f(x),f(y)\in K$.
		Because $rank(\mathbb{G}_2)=rank(H)_2$, $XF_\varepsilon^*(\omega_{2}^{m})\rightarrow 0$ for
		\begin{equation*}
			\omega_{2}^{m}=d C_{m}+\frac{1}{2}\sum_{i=1}^{r} \sum_{j=1}^{s} B_{j} \beta_{m}^{i j}d A_{i}-\frac{1}{12} \sum_{l=1}^{r} \sum_{k=1}^{s}\sum_{i<j} A_{l} \alpha_{k}^{i j}\left( A_{i}d A_{j}- A_{j}d A_{i}\right) \beta_{m}^{l k}. 
		\end{equation*}
		and horizontal vector field $X$.
		Therefore, 
		\begin{equation*}
			\begin{aligned}
				&\frac{1}{2}\sum_{i=1}^{r} \sum_{j=1}^{s}  \beta_{m}^{i j}\int_{a}^{b}\widetilde{\gamma}^B_{j}d \widetilde{\gamma}^A_{i}-\frac{1}{12} \sum_{l=1}^{r} \sum_{k=1}^{s}\sum_{i<j}\alpha_{k}^{i j}\beta_{m}^{l k}\int_{a}^{b} \widetilde{\gamma}^A_{l} \left( \widetilde{\gamma}^A_{i}d \widetilde{\gamma}^A_{j}- \widetilde{\gamma}^A_{j}d \widetilde{\gamma}^A_{i}\right)\\
				&=\widetilde{\gamma}^{C_m}(a)-\widetilde{\gamma}^{C_m}(b)+\int_{\gamma}F^*(\omega_{2}^{m})\\
				&=\widetilde{\gamma}^{C_m}(a)-\widetilde{\gamma}^{C_m}(b)
			\end{aligned}
			,
		\end{equation*}
		where the $\widetilde{\gamma}=F\circ\gamma\in C^{0,\gamma}([0,1];\mathbb{G}_2)$ for Lipschitz curve $\gamma$. 
		
		For higher step situation, the way is the same. 
		
		Due to $F$ is contact lift $\pi_2\circ F \circ \widetilde{\gamma}=f \circ \pi_1\circ \widetilde{\gamma}$, we have $\int_{\widetilde{\gamma}}\left(f \circ \pi_{1}\right)^{*} \alpha_{2}=0$ when $\widetilde{\gamma}$ is a closed Lipschitz curve. 
		
		For the converse direction, because $\int_{f \circ \pi_{1}\circ\widetilde{\gamma}} \alpha = 0$, we can define the lift $F$ in the following sense.
		
		If $\gamma:[0,1]\to H_1$ is a horizontal curve starting from $\gamma(0)=e_{H_1}$, then 
		\begin{equation*}
			\widetilde{\gamma}(1)=\gamma(1) \cdot exp(\int_{\gamma}\alpha)
		\end{equation*}
		For the contact lift $F$ of $f$, then
		\begin{equation*}
			F(\widetilde{\gamma}(1))=f(\gamma(1)) \cdot exp(\int_{\gamma}\left(f \circ \pi_{1}\right)^{*} \alpha_{2})
		\end{equation*}
		Obviously, this $F$ is well defined, since the value of $F$ does not depend on the choice of curve $\gamma$. 
		
		The next step is to prove $F$ is a $\gamma$-Hölder map. Let's take a Lipschitz curve $\widetilde{\gamma}$ connecting $e$ and $\widetilde{\gamma}(1)$ and a Lipschitz curve $\widehat{\gamma}$ connecting $\widetilde{\gamma}(1)=\widehat{\gamma}(0)$ and $\widehat{\gamma}(\varepsilon)$, where $\widehat{\gamma}$ is under arc length parameter.
		\begin{equation*}
			\begin{aligned}
				&d_{CC}(F(\widehat{\gamma}(0)),F(\widehat{\gamma}(\varepsilon)))\\ 
				&\leq d_{CC}(f(\pi_1(\widehat{\gamma}(0))),f(\pi_1(\widehat{\gamma}(\varepsilon))))\\
				&+d_{CC}(e,exp(\int_{\widehat{\gamma}}\left(f \circ \pi_{1}\right)^{*} \alpha_{2}))\\
				&\lesssim \varepsilon^\beta+d_{CC}(e,exp(\int_{\widehat{\gamma}}\left(f \circ \pi_{1}\right)^{*} \alpha_{2}))
			\end{aligned}
		\end{equation*}
		Because 
		\begin{equation*}
			\int_{\widehat{\gamma}}\left(f \circ \pi_{1}\right)^{*} \alpha_{2}=\int_{(f \circ \pi_{1}\circ\widehat{\gamma}(0))^{-1}f \circ \pi_{1}\circ\widehat{\gamma}}\left( r_{f \circ \pi_{1}\circ\widehat{\gamma}(0)}\right)^{*} \alpha_{2}
		\end{equation*}
		where the $r$ is the right translation and $(f \circ \pi_{1}\circ\widehat{\gamma}(0))^{-1}f \circ \pi_{1}\circ\widehat{\gamma}$ is also a $\gamma$-Hölder curve, for convenience, we can assume $f \circ \pi_{1}\circ\widehat{\gamma}(0)=e$.
		
		We take step 3 Carnot group as an example.
		
		From \eqref{metric-K},
		\begin{equation*}
			\begin{aligned}
				|(f \circ \pi_{1}\circ\widehat{\gamma})^{B_i}(\varepsilon)-(f \circ \pi_{1}\circ\widehat{\gamma})^{B_i}(0)|^{\frac{1}{2}}&\leq d_K((f \circ \pi_{1}\circ\widehat{\gamma})(\varepsilon),(f \circ \pi_{1}\circ\widehat{\gamma})(0))\\
				&\approx d_{CC}(e,f \circ \pi_{1}\circ\widehat{\gamma}(\varepsilon))\\
				&\lesssim \varepsilon^\beta 
			\end{aligned}
		\end{equation*}
		thus the coordinate component $(f \circ \pi_{1}\circ\widehat{\gamma})^{B_i}$ is a $\varepsilon^{2\beta}$-Hölder function.
		
		By the result form lemma 5.8 in  \cite{hajłasz2025holdercontinuousmappingsdifferential} yields 
		\begin{equation*}
			\int_{\widetilde{\gamma}} (f \circ \pi_{1})^*\alpha\lesssim \varepsilon^{3\beta}[f\circ \pi_{1}\circ\widetilde{\gamma}]_\gamma^2
		\end{equation*}
		Thus, 
		\begin{equation*}
			\begin{aligned}
				&d_{CC}(F(\widehat{\gamma}(0)),F(\widehat{\gamma}(\varepsilon)))\\ 
				&\lesssim \varepsilon^\beta+d_{CC}(e,exp(\int_{\widehat{\gamma}}\left(f \circ \pi_{1}\right)^{*} \alpha_{2}))\\
				&\approx \varepsilon^\beta+\left|\int_{\widetilde{\gamma}} (f \circ \pi_{1})^*\alpha\right|^{\frac{1}{3}}\\
				&\lesssim \varepsilon^\beta+\varepsilon^\beta\\
				&=2\varepsilon^\beta
			\end{aligned}
		\end{equation*}
		Therefore, the lift $F$ is a $\beta$-Hölder map. For the higher step Carnot group, the way is the same.
	\end{proof}
	
	In \cite{2025arXiv250814647H}, theorem 1.1 and proposition 1.4 give decent results about the existence of contact lifts for differentiable maps. However, for Hölder map, the result about the existence of lift can just rely on the metric structure. Compared with \cite{2025arXiv250814647H}, in this section we just consider a special case of central extension.
	
	\begin{thm}
		Let $V_1 \to G_1 \to H$ and $V_2 \to G_2 \to H$ be central extensions, where $\mathfrak{h}=\mathfrak{h}^{[1]}\oplus \dots \oplus \mathfrak{h}^{[n]}$ and $V_1=V_1^{[n+1]}$ and $V_2=V_2^{[n+1]}$. Let $U$ is a simply connected domain in $H$ and $f: U \to H$ be a $\gamma$-Hölder map for $\beta>\frac{n}{n+1}$. Then $f$ admits a $\beta$-Hölder lift $F$ if and only if there is a linear map $\varphi: V_1 \to V_2$ and a $V_2$-value 1-form $\lambda$ with weight $\geq 2 $ such that the integral of $f^*\alpha_{2}$ on any horizontal curve equals to the integral of $\varphi\circ \alpha_1$ on the same curve, i.e.   
		\begin{equation*}
			\varphi\circ \int_{\gamma} \alpha_1=  \int_{\gamma} f^*\alpha_{2}.
		\end{equation*}
		such that for any horizontal curve. 
		
		\begin{displaymath}
			\xymatrix{
				V_1 \ar[r] \ar[d]_\varphi & G_1 \ar[d]^{F} \ar[r]^{\pi_1} & H \ar[d]^{f}
				\\
				V_2 \ar[r] & G_2  \ar[r]^{\pi_2} & H
			}
		\end{displaymath} 
	\end{thm}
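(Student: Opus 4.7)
The plan is to prove both implications using the apparatus developed in the preceding sections: the forward direction extracts $\varphi$ from Theorem 6.6 by comparing two representations of the lift along horizontal curves, and the backward direction reduces the condition to the closed-curve criterion of Theorem 7.2.

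For the ``only if'' direction, assume $F \in C^{0,\beta}(\pi_1^{-1}(U); G_2)$ is a Hölder lift of $f$. Since both $V_1 = V_1^{[n+1]}$ and $V_2 = V_2^{[n+1]}$ sit in the top layer, the equal-rank hypothesis of Theorem 6.6 is automatic, and that theorem yields a Lie group homomorphism $\Phi \colon V_1 \to V_2$ with $F(gk) = F(g)\Phi(k)$. Because $V_1$ and $V_2$ are abelian vector groups, $\Phi$ is automatically a linear map, which I call $\varphi$. Pick a Lipschitz horizontal curve $\gamma \colon [0,1] \to U$ with $\gamma(0) = e_H$ and let $\widetilde{\gamma}$ be its horizontal lift in $G_1$ based at $e_{G_1}$; Lemma 3.1 then gives $\widetilde{\gamma}(1) = \gamma(1) \cdot \exp(\int_\gamma \alpha_1)$. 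After gauge-normalizing $F(e_{G_1}) = e_{G_2}$ (legitimate because any two Hölder lifts differ by a constant element of $V_2$, by the rigidity argument used in the proof of Theorem 6.6 and extended to weight $n+1$ via Theorem 6.5), the lift coincides with the path-integral formula from Theorem 7.2, giving $F(\widetilde{\gamma}(1)) = f(\gamma(1)) \cdot \exp(\int_\gamma f^* \alpha_2)$. The decomposition from Theorem 6.6 supplies the alternative expression $F(\widetilde{\gamma}(1)) = F(\gamma(1)) \cdot \exp(\varphi \circ \int_\gamma \alpha_1)$. Matching $V_2$-components and using injectivity of $\exp$ on the abelian group $V_2$ produces $\int_\gamma f^* \alpha_2 = \varphi \circ \int_\gamma \alpha_1$; the auxiliary form $\lambda = f^*\alpha_2 - \varphi \circ \alpha_1$ then has weight $\geq 2$ since its integral vanishes on every horizontal curve.

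For the ``if'' direction, I verify the hypothesis of Theorem 7.2. Given any closed Lipschitz horizontal curve $\widetilde{\gamma}$ in $G_1$, its projection $\gamma := \pi_1 \circ \widetilde{\gamma}$ is a closed Lipschitz horizontal curve in $H$, and the closedness of $\widetilde{\gamma}$ itself, together with Lemma 3.1, forces $\int_\gamma \alpha_1 = 0$. Applying the assumed identity to $\gamma$ yields $\int_\gamma f^* \alpha_2 = \varphi(0) = 0$, which is exactly the vanishing required in Theorem 7.2. Invoking that theorem produces the $\beta$-Hölder lift $F$.

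The main obstacle is not logical but analytic: one must ensure the Young-type integrals above are well-defined as limits of mollified pullbacks in the Hölder category, and that the reconstructed lift is genuinely $\beta$-Hölder in the CC metric. This is where the threshold $\beta > n/(n+1)$ enters. The 1-form $\alpha_2$ has weight $n+1$, so its coefficients in adapted coordinates are polynomials of total weight $n$ in the coordinate functions of $H$; the Young bound of Lemma 7.1, combined with the layer-by-layer decay estimates of Theorem 6.5 applied to $f_\varepsilon$, is what forces the combined Hölder regularity to stay above $1$ and matches the top-layer growth $\int_\gamma f^*\alpha_2 \lesssim \mathrm{length}(\gamma)^{(n+1)\beta}$ with the CC scaling of weight $n+1$. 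The same numerical bound is what makes Hölder maps into $V_2$ rigid enough to be constants, which is the ingredient used for gauge-normalization in the forward direction.
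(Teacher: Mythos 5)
Your backward direction is sound and in fact more direct than the paper's: the paper runs a local path-independence plus simple-connectivity patching argument, whereas you simply observe that CC-Lipschitz closed curves in $G_1$ are horizontal, that their projections $\gamma$ satisfy $\int_\gamma \alpha_1 = 0$ by Lemma 3.1 (the lift closes up), and hence that the hypothesis immediately gives the vanishing required in Theorem 7.2.

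The forward direction, however, has a genuine gap at the step ``matching $V_2$-components.'' After normalizing $F$ to be the Theorem 7.2 path-integral lift, your two expressions are $F(\widetilde{\gamma}(1)) = f(\gamma(1))\cdot\exp\bigl(\int_\gamma f^*\alpha_2\bigr)$ and $F(\widetilde{\gamma}(1)) = F(\gamma(1))\cdot\exp\bigl(\varphi\circ\int_\gamma\alpha_1\bigr)$, and comparing them only yields $\int_\gamma f^*\alpha_2 = \pi_{V_2}(F(\gamma(1))) + \varphi\bigl(\int_\gamma\alpha_1\bigr)$. You implicitly take $\pi_{V_2}(F(\gamma(1))) = 0$, i.e.\ $F(\gamma(1)) = f(\gamma(1))$ under the splitting $G_2 \simeq H \times V_2$; but by the same path-integral formula $\pi_{V_2}(F(\gamma(1))) = \int_\sigma f^*\alpha_2$ for a reference horizontal curve $\sigma$ with $\int_\sigma\alpha_1 = 0$, and the vanishing of this term is exactly the case $\int\alpha_1 = 0$ of the identity you are trying to prove. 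So your matching only delivers the identity for closed horizontal curves (equivalently, up to a reference-curve contribution), not ``for any horizontal curve'' as the statement demands. The paper closes precisely this hole with an ingredient your proposal has no counterpart for: it proves that $\pi_{V_2}\circ F|_H$ is constant via the Hölder-rigidity claim from Theorem 6.6, and then eliminates the remaining term $\mu_h$ in the decomposition $\varphi\circ\alpha_1 = f^*\alpha_2 + \mu_h + \lambda$ by a weight-counting argument (a form of weight $< n+1$ cannot contribute to the weight-$(n+1)$ component); this is where the weight-$\geq 2$ form $\lambda$ in the statement actually does its work. Your post hoc definition $\lambda = f^*\alpha_2 - \varphi\circ\alpha_1$, justified by ``its integral vanishes on every horizontal curve,'' is circular, since that vanishing is the very conclusion in question. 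A further minor point: the rigidity of Hölder maps into $V_2$ rests on the horizontal-derivative bounds $\varepsilon^{2\beta-1}$ of Theorem 6.3 (valid for $\beta > 1/2$), not on Theorem 6.5, which concerns vector fields in higher layers.
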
\label{thm-criti}
	\begin{proof}
		If $f$ admits a $\beta$-Hölder lift $F$, thanks to theorem \ref{thm-decomp}, there is a linear map $\varphi: V_1 \to V_2$. Because of $\beta>\frac{n}{n+1}$ and theorem \ref{thm-vanish-higher}, we know that the local $\beta$-Hölder map $F$ can still preserve the graded structure of Lie algebra.
		\begin{equation*}
			\begin{aligned}
				F(hk)&=F(h)\Phi(k)\\
				&=\pi_{V_2}\circ F(h)\cdot \pi_2\circ F(h) \cdot \Phi(k)\\
				&=\pi_2\circ F(h) \cdot \Phi(k)\\
				&=f\circ \pi_1(h) \cdot \Phi(k)\\
				&= f(h) \Phi(k),
			\end{aligned}
		\end{equation*}
		for $h\in H$ and $k\in V_1$. The second equation is because $\pi_{V_2}\circ F|_H:H\to V_2$ is a local $\beta$-Hölder map and the proof is the same as the claim in theorem \ref{thm-decomp}. Thus, 
		\begin{equation*}
			\varphi\circ \int_{\gamma} \alpha_1= \Phi(k)=  \int_{\gamma} (f\circ\pi_{1})^*\alpha_{2},
		\end{equation*}
		where we take $\gamma$ as the geodesic from $e$ to $k$. 
		Obviously, here the $\gamma$ can be replaced by any Lipschitz curve.
		
		For convenience, we can denote as the following form. There is a $V_2$-value 1-form $\lambda$ with weight $\geq2 $ such that 
		\begin{equation*}
			\varphi\circ \alpha_1=  (f\circ\pi_{1})^*\alpha_{2}+\lambda.
		\end{equation*}
		The $(f\circ\pi_{1})^*\alpha_{2}$ can be decomposed into two part: $f^*\alpha_{2} :V_1\to V_2$ and $\mu_{h}:H\to V_2$.
		For the $\varphi\circ \alpha_1$, it turns  $\alpha_1$ with weight $=n+1 $ to a form with weight $=n+1$. For $f^*\alpha_{2}$, it turns $\alpha_2$ with weight $=n+1 $ to a form with weight $=n+1$. However, the $\mu_{h}=0$, since it turns a 1-form with weight $<n+1 $ to a form with weight $=n+1$. 
		
		Therefore, 
		\begin{equation*}
			\begin{aligned}
				\varphi\circ \alpha_1&=  f^*\alpha_{2}+\mu_{h}+\lambda\\
				&=f^*\alpha_{2}+\lambda.
			\end{aligned}
		\end{equation*}
		and
		\begin{equation*}
			\varphi\circ \int_{\gamma} \alpha_1=  \int_{\gamma} f^*\alpha_{2}.
		\end{equation*}
		
		For the converse direction, because distributionally $\varphi\circ\alpha_{1}=f^*\alpha_{2}+\lambda$  for any horizontal curve, locally for any fixed point $p$ and its closed cubic neighbor $U(p)$
		\begin{equation*}
			\int_{\widetilde{\gamma}} f^*\alpha_2 = \varphi\circ \int_{\widetilde{\gamma}} \alpha_1
		\end{equation*}
		where $\widetilde{\gamma}\subset U(p) $. That means the value of $F$ dose not depend on the choice of path in $U(p)$.  We can divide the area into cubes $\left\{ U(p)\right\}$. Since the domain $U$ is simply connected, when considering the value of $\int_{\widetilde{\gamma}} f^*\alpha_2$, any given horizontal curve $\widetilde{\gamma}$ is equivalent to a curve near itself. Eventually, this curve can be moved to arbitrary horizontal curve. Therefore, for any closed Lipschitz curve $\widehat{\gamma}$
		\begin{equation*}
			\int_{\widehat{\gamma}} f^*\alpha_2 = \varphi\circ \int_{\widehat{\gamma}} \alpha_1=0.
		\end{equation*}
		Then $f$ admits a $\beta$-Hölder lift $F$, since theorem \ref{thm-integ}.
	\end{proof}
	
	\noindent\textbf{Example\ 7.4.} For Heisenberg group $H_1$,
	We can take $f(x,y)=(ax+g(y),\frac{1}{a^2}y):\mathbb{R}^2\to\mathbb{R}^2$, where $g$ is a Weierstrass $\frac{2}{3}$-Hölder function
	\begin{equation*}
		g(y)=\sum_{n=0}^{\infty}{\frac{1}{9}}^n cos(27^n\pi y)
	\end{equation*}
	It is obvious that $f$ admits a contact lift, since
	\begin{equation*}
		\begin{aligned}
			f^*(dx\wedge dy)&=\lim_{\varepsilon} f_\varepsilon^*(dx\wedge dy)\\
			&=\lim_{\varepsilon} f_\varepsilon^*(dx)\wedge f^*(dy)\\
			&=\lim_{\varepsilon} \left(\frac{\partial}{\partial x}f_\varepsilon^xdx+\frac{\partial}{\partial y}f_\varepsilon^xdy\right)\wedge \frac{1}{a^2}dy\\
			&=\lim_{\varepsilon} \left(\frac{\partial}{\partial x}f^x*\eta_\varepsilon dx+\frac{\partial}{\partial y}f^x*\eta_\varepsilon dy\right)\wedge \frac{1}{a^2}dy\\
			&=\lim_{\varepsilon} \frac{1}{a^2} \frac{\partial}{\partial x}f^x*\eta_\varepsilon dx \wedge dy\\
			&=\lim_{\varepsilon} \frac{1}{a} dx \wedge dy\\
			&=\frac{1}{a} dx \wedge dy.
		\end{aligned}
	\end{equation*}
	Therefore, with the help of theorem \ref{thm-criti}, we can find a $\frac{2}{3}$-Hölder lift $F:H_1\to H_1$.
	
	The central extension is defined by $d_0\alpha=\rho=dx\wedge dy$ as
	\begin{displaymath}
		\xymatrix{
			R \ar[r] & H_1 \ar[d]^{F} \ar[r]^{\pi} & \mathbb{R}^2 \ar[d]^{f}
			\\
			R \ar[r] & H_1  \ar[r]^{\pi} & \mathbb{R}^2
		}
	\end{displaymath}
	
	If $\gamma:[0,1]\to\mathbb{R}^2$ is a Lipschitz curve starting from $\gamma(0)=e$, then 
	\begin{equation*}
		\widetilde{\gamma}(1)=\gamma(1) \cdot exp(\int_{\gamma}\alpha)
	\end{equation*}
	and
	\begin{equation*}
		F(\widetilde{\gamma}(1))=f(\gamma(1)) \cdot exp(\int_{\gamma}\left(f \circ \pi\right)^{*} \alpha)
	\end{equation*}

	\bibliographystyle{plain}
	\bibliography{ref}
\end{document}